\documentclass[leqno,12pt]{amsart}
\usepackage{amssymb}
\usepackage{amsmath}
\usepackage{enumerate}
\usepackage{amsfonts}
\usepackage{hyperref}
\usepackage{mathrsfs}
\usepackage{cancel}
\usepackage{tikz}
\usepackage{graphicx}

\usepackage[headheight=18pt, top=17mm, bottom=17mm, left=20mm, right=20mm]{geometry}

\hypersetup{
    colorlinks=true,
    linkcolor= blue,
    citecolor =cyan,
    urlcolor = teal,
}

\newtheorem{theorem}{Theorem}[section]
\newtheorem{corollary}[theorem]{Corollary}
\newtheorem{lemma}[theorem]{Lemma}
\newtheorem{proposition}[theorem]{Proposition}

\newtheorem{definition}[theorem]{Definition}

\numberwithin{equation}{section}

\begin{document}

\title[Product Hardy spaces
associated with Schr\"odinger operators]{Fefferman--Chang-type atomic decompositions for
product Hardy spaces associated with
Schr\"odinger operators}

\author[Agnieszka Hejna-Łyżwa]{Agnieszka Hejna-Łyżwa}

\address{Agnieszka Hejna-Łyżwa, Uniwersytet Wroc\l awski,
Instytut Matematyczny,
Pl. Grunwaldzki 2,
50-384 Wroc\l aw,
Poland}
\email{agnieszka.hejna@math.uni.wroc.pl}

\begin{abstract}
We establish a geometric (Fefferman--Chang-type) atomic characterization of the
two-parameter Hardy space associated with a pair of
Schr\"odinger operators $L_i=-\Delta+V_i$ on
$\mathbb R^{n_i}$, $i=1,2$.
The space is defined by the product conical square function.
We assume comparability of neighbouring covering cubes,
a local bound on the potentials, and decay of the
semigroup mass at large scales.

We prove that the Hardy norm is equivalent to a sum
of classical local product Hardy norms at scales
determined by the adapted coverings.
This localization formula yields an atomic decomposition
whose atoms satisfy size and partial cancellation
conditions, with cancellation required only below
the corresponding local scales.
Both the atoms and their rectangular components
remain supported inside fixed enlargements of the
covering rectangles.

The proof combines one-parameter Schr\"odinger Hardy
space characterizations with standard vector-valued
arguments and a localized product atomic decomposition
with uniform control of scales and supports.
The assumptions cover the model case of reverse
H\"older potentials $V\in RH_q$, $q>n/2$, as well
as potentials outside every $RH_q$ class, $q>1$,
including half-space indicators and exponential
potentials.
\end{abstract}

\keywords{Product Hardy spaces, atomic decomposition,
Schr\"odinger operators, local Hardy spaces,
square functions, localization}

\subjclass[2020]{Primary 42B30; Secondary 42B25, 35J10}

\maketitle

\section{Introduction}
The purpose of this paper is to obtain a geometric
atomic characterization of the two-parameter Hardy
space associated with a pair of Schr\"odinger operators
\[
    L_i=-\Delta_{x_i}+V_i,
    \qquad i=1,2,
\]
on $\mathbb R^{n_1}\times\mathbb R^{n_2}$.
The space is defined by the product conical square
function associated with the semigroups
$e^{-t_1L_1}$ and $e^{-t_2L_2}$, whose parameters
vary independently.
We seek atoms described by support, size, and
partial cancellation conditions, with local scales
determined by coverings adapted to the potentials.

The geometric structure of product atoms originates
in the work of Chang and Fefferman
\cite{ChangFefferman1980,ChangFefferman1982}.
Briefly, such an atom is supported in an open set
$\Omega$ of finite measure and admits a decomposition
\[
    a=\sum_{R\in\mathcal M(\Omega)}a_R,
    \qquad
    \operatorname{supp}a_R\subset cR,
    \qquad
    \|a\|_2^2+\sum_{R\in\mathcal M(\Omega)}\|a_R\|_2^2
    \lesssim|\Omega|^{-1},
\]
where $\mathcal M(\Omega)$ denotes the maximal
dyadic rectangles contained in $\Omega$ and
$c>1$ is fixed.
Each component satisfies cancellation in both variables:
\[
    \int_{\mathbb R^{n_1}}a_R(x_1,x_2)\,dx_1=0,
    \qquad
    \int_{\mathbb R^{n_2}}a_R(x_1,x_2)\,dx_2=0,
\]
for almost every value of the remaining variable.
Our atoms retain this open-set and rectangular
structure, but cancellation is required only
when the corresponding side length of $R$ is
smaller than the local scale determined by
the adapted covering.

In the one-parameter setting, the relation between
local scales and cancellation already appears
in Goldberg's local Hardy spaces \cite{Goldberg}.
For Schr\"odinger operators with reverse H\"older
potentials, Dziuba\'nski and Zienkiewicz
\cite{DZ_ReverseHolder} obtained atomic
characterizations governed by the critical radius introduced in the work of Fefferman~\cite{Fefferman}:
\[
    \rho_V(x)
    =
    \sup\left\{
        r>0:
        r^{2-n}\int_{B(x,r)}V(y)\,dy\leq1
    \right\}.
\]
Their atoms satisfy the usual support and size
conditions
\[
    \operatorname{supp}a\subset B(x,r),
    \qquad
    \|a\|_\infty\leq |B(x,r)|^{-1},
    \qquad
    r\lesssim\rho_V(x).
\]
Cancellation $\int a=0$ is required when
$r$ is sufficiently small relative to $\rho_V(x)$,
whereas atoms at the critical scale need not
have integral zero.
Their subsequent covering-based theory
\cite{DZ_Studia} extends this principle beyond
reverse H\"older potentials, with the side lengths
of adapted covering cubes replacing the critical
radius.

This suggests a geometric product analogue:
rectangular components should satisfy cancellation
in each variable whose side length is below
the corresponding local scale, with no cancellation
required in variables at or above that scale.
To the best of our knowledge, a characterization
by such Chang--Fefferman-type atoms, with independently
varying local scales in the two variables,
has not previously been established for the
class of Schr\"odinger operators considered here.
The covering-based theory of
\cite{DZ_Studia} provides the one-parameter
starting point for this construction.

Further results demonstrate the scope of this
local approach.
Czaja and Zienkiewicz established an atomic
characterization for one-dimensional Schr\"odinger
operators with nonnegative locally integrable
potentials \cite{CzajaZienkiewicz2008}.
Dziuba\'nski and Preisner obtained a Riesz transform
characterization under the covering and potential
conditions used in \cite{DZ_Studia}, and discussed
additional constructions of admissible examples
\cite{DziubanskiPreisner2011}.
Local atomic decompositions for multidimensional
operators were studied by Kania-Strojec, Plewa,
and Preisner \cite{KaniaStrojecPlewaPreisner2021},
including stability results for sums of operators
acting in separate variables.
Their setting involves a single semigroup parameter.
Here the two independent parameters require
product atoms and separate cancellation conditions
in the two spatial variables.

Hardy spaces associated with operators on product
domains have also been studied through operator-adapted
molecules and atoms.
Deng, Song, Tan, and Yan developed a theory of
Hardy and BMO spaces on product domains, including
a molecular decomposition obtained through tent
spaces \cite{DengSongTanYan2007}.
Atomic decompositions and square function and
maximal function characterizations for product
Schr\"odinger Hardy spaces were subsequently
established in
\cite{SongTanYan2011,SongTan2012}. Further characterizations involving combinations of
maximal functions and square functions in the two
variables were obtained by Liu and Zhao
\cite{LiuZhao2015} and Zhao, Liu, and Jiang
\cite{ZhaoLiuJiang2019}.
More recently, Deng and Guedjiba
\cite[Definition~3.5 and Theorem~4.1]{DengGuedjiba2026}
established atomic characterizations for product Hardy
spaces associated with nonnegative self-adjoint
operators satisfying Davies--Gaffney estimates.
Their rectangular components are represented through
powers of the underlying operators.
Here we obtain atoms defined by support, size, and
ordinary partial cancellation conditions, with the
dependence on the Schr\"odinger operators encoded
by the adapted coverings and their local scales.

The local product theory of Ding, Lu, and Zhu
\cite{DingLuZhu2019} provides the classical
two-parameter model for our construction.
Geometric atomic descriptions of local Hardy
spaces also occur in the multi-norm setting
studied in by the author, Nagel, and Ricci in
\cite{HejnaNagelRicci2025}.
In that setting, the geometry is specified by
families of dilations.
In the present paper, the local scales vary
with the spatial location through coverings
adapted to the Schr\"odinger potentials.

For each factor, we assume the covering
comparability condition \eqref{ass:F},
the local potential estimate \eqref{ass:K},
and the semigroup mass decay condition
\eqref{ass:D}.
Let $\mathcal Q_i$ be the corresponding coverings
and let $\{\phi_{Q_i}\}_{Q_i\in\mathcal Q_i}$
be subordinate smooth partitions of unity.
One of our main results is the localization formula
\[
    \|f\|_{H^1_{L_1,L_2}}
    \asymp
    \sum_{Q_1\in\mathcal Q_1}
    \sum_{Q_2\in\mathcal Q_2}
        \|\phi_{Q_1}\phi_{Q_2}f\|
            _{h^1_{\ell(Q_1),\ell(Q_2)}}.
\]
Here $h^1_{r_1,r_2}$ is the classical local
product Hardy space with independent local
scales $r_1$ and $r_2$.
Thus the Schr\"odinger Hardy norm is described
by a sum of classical local product norms.
The precise membership statement for $L^1$
functions is given in
Theorem~\ref{thm:main-localization}.

The atomic characterization is stated in
Theorem~\ref{thm:main-atomic-characterization}.
Each atom is associated with a pair
$Q_i\in\mathcal Q_i$ and is a local product atom
at scales $\eta\ell(Q_1),\eta\ell(Q_2)$,
where $\eta>0$ is fixed.
Its associated open set and all its rectangular
components are contained in
$Q_1^{**}\times Q_2^{**}$.
The components satisfy partial cancellation
whenever the corresponding side length is below
a fixed multiple of the relevant local scale.
In particular, the definition does not require
the components to be represented as images
of powers of $L_1$ and $L_2$. Figure~\ref{fig:geometric-atoms} illustrates the support
conditions and the four possible cancellation regimes
for the rectangular components of an atom.

\begin{figure}[htbp]
\centering
\resizebox{0.95\textwidth}{!}{%
\begin{tikzpicture}[
    font=\small,
    covering/.style={
        draw=black,
        line width=0.8pt
    },
    openset/.style={
        draw=black!60,
        fill=black!6,
        line width=0.7pt,
        rounded corners=4pt
    },
    component/.style={
        draw=black!55,
        line width=0.6pt
    },
    selected/.style={
        draw=blue!65!black,
        fill=blue!10,
        line width=0.9pt
    },
    enlarged/.style={
        draw=blue!65!black,
        densely dashed,
        line width=0.8pt
    },
    threshold/.style={
        draw=black!45,
        densely dashed,
        line width=0.6pt
    }
]

\begin{scope}
    \node at (2.8,5.15) {(a)};

    \draw[covering] (0,0.45) rectangle (5.6,4.5);
    \node[anchor=south] at (2.8,4.45)
        {$Q_1^{**}\times Q_2^{**}$};

    \path[openset]
        (0.55,1.05)
        -- (3.95,1.05)
        -- (3.95,1.8)
        -- (5.05,1.8)
        -- (5.05,3.65)
        -- (3.75,3.65)
        -- (3.75,4.05)
        -- (1.35,4.05)
        -- (1.35,3.4)
        -- (0.55,3.4)
        -- cycle;

    \node at (4.6,3.2) {$\Omega$};

    \draw[component]
        (1.55,1.3) rectangle (3.45,3.85);
    \draw[component]
        (0.85,2.2) rectangle (4.75,2.85);

    \path[selected]
        (1.65,1.45) rectangle (3.25,1.95);
    \draw[enlarged]
        (0.85,1.2) rectangle (4.05,2.2);

    \node[text=blue!65!black] at (2.45,1.7) {$R$};
    \node[
        anchor=north,
        text=blue!65!black
    ] at (4.05,1.15) {$\tau R$};
\end{scope}

\begin{scope}[xshift=6.6cm]
    \node at (2.65,5.15) {(b)};

    \begin{scope}[shift={(0,3.15)}]
        \path[selected] (0,0) rectangle (0.75,0.45);
        \draw[threshold] (1.1,-0.08) -- (1.1,1.15);
        \draw[threshold] (-0.08,0.75) -- (1.85,0.75);
        \node at (0.9,-0.4) {$1,2$};
    \end{scope}

    \begin{scope}[shift={(3.15,3.15)}]
        \path[selected] (0,0) rectangle (1.75,0.45);
        \draw[threshold] (1.1,-0.08) -- (1.1,1.15);
        \draw[threshold] (-0.08,0.75) -- (1.85,0.75);
        \node at (0.9,-0.4) {$2$};
    \end{scope}

    \begin{scope}[shift={(0,0.9)}]
        \path[selected] (0,0) rectangle (0.75,1.05);
        \draw[threshold] (1.1,-0.08) -- (1.1,1.15);
        \draw[threshold] (-0.08,0.75) -- (1.85,0.75);
        \node at (0.9,-0.4) {$1$};
    \end{scope}

    \begin{scope}[shift={(3.15,0.9)}]
        \path[selected] (0,0) rectangle (1.75,1.05);
        \draw[threshold] (1.1,-0.08) -- (1.1,1.15);
        \draw[threshold] (-0.08,0.75) -- (1.85,0.75);
        \node at (0.9,-0.4) {$\varnothing$};
    \end{scope}
\end{scope}

\end{tikzpicture}%
}
\caption{Schematic geometry of the atoms for $n_1=n_2=1$.
(a) The atom is supported in $\Omega$, whereas each
component $a_R$ is supported in
$\tau R\cap(Q_1^{**}\times Q_2^{**})$.
(b) Horizontal and vertical sides correspond to
$x_1$ and $x_2$, respectively.
The dashed lines mark the thresholds $\rho r_1$
and $\rho r_2$, where $r_i=\eta\ell(Q_i)$.
The labels below the rectangles indicate the variables
in which cancellation is required; $\varnothing$
means that neither cancellation condition is required.}
\label{fig:geometric-atoms}
\end{figure}

The main contribution is a geometric atomic
characterization at spatially varying scales determined
independently by the two adapted coverings, obtained
through a localization formula for the Hardy norm
and a local product decomposition that keeps both
the associated open sets and every rectangular
component inside the prescribed covering rectangles.
We therefore establish a localized version
of the classical product decomposition, with
constants uniform in both local scales.
This support control allows the local
decompositions of $\phi_{Q_1}\phi_{Q_2}f$
to be combined into atoms adapted to the
coverings. This simultaneous control of scales and supports
allows the decompositions of the localized pieces
$\phi_{Q_1}\phi_{Q_2}f$ to be combined into atoms
satisfying Definition~\ref{def:covering-product-atom}.

The distinction from the existing operator-adapted atomic descriptions is that the atoms in our characterization are specified entirely by geometric conditions: their supports, $L^2$ bounds, and partial cancellation in the individual variables. In particular, their definition does not involve powers of the operators $L_1$ and $L_2$, nor does it require estimates for functions obtained by applying such powers. The dependence on the Schrödinger operators enters only through the admissible coverings and the associated local scales. This provides a direct geometric description of the product Hardy space, rather than an abstract operator-dependent atomic representation.

The comparison with the Schr\"odinger square
function is obtained analytically.
We first derive one-parameter localization
and supported-function estimates from
\cite{DZ_Studia}, using the square function
and maximal function comparison in
\cite{SongYan2018}.
Standard Rademacher randomization and the
Hilbert-space Khintchine inequality then
extend these estimates to sequences and
Hilbert-space-valued functions.
Applying them successively in the two variables
gives the product comparison.
The remaining argument combines this comparison
with the localized atomic construction and
identifies the Hardy space completion with
a subspace of $L^1$.

For $n\geq3$, nonnegative nonzero $L^2_{\rm loc}$ potentials
satisfying a reverse H\"older inequality with
exponent $q>n/2$ admit adapted coverings for
which the conditions hold; see
\cite[Section~8]{DZ_Studia}.
The assumptions also accommodate potentials
outside every reverse H\"older class $RH_q$,
$q>1$, including
\[
    V(x)=\mathbf1_{\{x_1>0\}},
    \qquad
    V(x)=e^{x_1},
    \qquad
    V(x)=\mathbf1_{[-1,1]}(x_1),
    \qquad n\geq1.
\]
For these examples and their adapted coverings,
see \cite[Theorems~2.4 and~2.6, and Section~8]{DZ_Studia}.
The coverings may be uniformly subdivided
to accommodate the fixed enlargements used here.
None of these potentials belongs to any
reverse H\"older class $RH_q$, $q>1$.
The one-dimensional results in
\cite{CzajaZienkiewicz2008,DziubanskiPreisner2011}
also provide examples of the form $V(x)=W(x_1)$,
where $W$ is nonnegative and nonzero;
we retain the local integrability requirements
of our setting.
The two factors may be chosen independently,
so the theorem applies to pairs of operators
with different local scale geometries. We also give explicit details of the support
and convergence arguments needed to identify
the completed Hardy space with its atomic
realization in $L^1$.

The paper is organized as follows.
Section~\ref{sec:setting} contains the assumptions,
the definitions of the Hardy and atomic spaces,
and the statements of the main results.
Section~\ref{sec:local-hardy} develops the
local product estimates and the atomic
decompositions with controlled supports.
Section~\ref{sec:comparison} proves the
comparison with the Schr\"odinger square
function.
Finally, Section~\ref{sec:atomic-decomposition}
establishes the main characterizations and
their consequences.

\section{Setting and main result}
\label{sec:setting}

We specify the operators, coverings, and assumptions
used throughout the paper, and define the associated
product Hardy space.

\subsection{Schr\"odinger operators}
\label{subsec:operators}

Let $n_1,n_2\geq1$.
For $i=1,2$, let
$V_i\in L^2_{\mathrm{loc}}(\mathbb R^{n_i})$
be nonnegative, and let
\[
    L_i=-\Delta_{x_i}+V_i
\]
be the nonnegative self-adjoint operator associated
with the closed quadratic form
\[
    \mathfrak q_i[u]
    =
    \int_{\mathbb R^{n_i}}|\nabla u(x_i)|^2\,dx_i
    +
    \int_{\mathbb R^{n_i}}V_i(x_i)|u(x_i)|^2\,dx_i,
\]
with domain
\[
    \mathcal D(\mathfrak q_i)
    =
    \left\{
        u\in W^{1,2}(\mathbb R^{n_i}):
        V_i^{1/2}u\in L^2(\mathbb R^{n_i})
    \right\}.
\]
We write $K_t^{(i)}=e^{-tL_i}$ and denote its
integral kernel by $k_t^{(i)}(x_i,y_i)$.
The classical heat semigroup is denoted by
$H_t^{(i)}$, with kernel
\[
    h_t^{(i)}(x_i)
    =
    (4\pi t)^{-n_i/2}
    \exp\left(-\frac{|x_i|^2}{4t}\right).
\]
The semigroup $K_t^{(i)}$ is sub-Markovian and satisfies
\begin{equation}\label{eq:heat-domination}
    0\leq k_t^{(i)}(x_i,y_i)
    \leq h_t^{(i)}(x_i-y_i)
\end{equation}
for $t>0$ and almost every $x_i,y_i$. On $\mathbb R^{n_1}\times\mathbb R^{n_2}$,
we consider the two-parameter semigroup
\[
    K_{t_1}^{(1)}\otimes K_{t_2}^{(2)},
    \qquad t_1,t_2>0.
\]
Operators carrying the index $i$ act only in
the variable $x_i$; operators acting in different
variables therefore commute.

\subsection{Admissible coverings and assumptions}
\label{subsec:coverings}

\begin{definition}\label{def:coverings}
For each $i=1,2$, let $\mathcal Q_i$ be a countable
covering of $\mathbb R^{n_i}$ by closed cubes of
positive finite side length, with sides parallel
to the coordinate axes and pairwise disjoint interiors.

For a cube $Q$, let $\ell(Q)$ denote its side length.
For $\lambda>0$, let $\lambda Q$ be the concentric
cube with side length $\lambda\ell(Q)$.
We set
\begin{equation}\label{eq:cube-enlargements}
    Q^*=2Q,\qquad Q^{**}=4Q,\qquad
    Q^{***}=8Q,\qquad Q^{****}=16Q.
\end{equation}
\end{definition}

For each $i=1,2$, we impose the following conditions.

\medskip
\noindent
\textbf{Comparability of neighbouring cubes.}
There exists $C_{F,i}\geq1$ such that
\begin{equation}\tag{$\mathrm F_i$}\label{ass:F}
    Q^{****}\cap(Q')^{****}\neq\varnothing
    \quad\Longrightarrow\quad
    C_{F,i}^{-1}\ell(Q)
    \leq\ell(Q')\leq C_{F,i}\ell(Q),
    \qquad Q,Q'\in\mathcal Q_i.
\end{equation}

\medskip
\noindent
\textbf{Local control of the potential.}
There exist $C_{K,i}>0$ and $\delta_i>0$ such that,
for every $Q\in\mathcal Q_i$ and
$0<t\leq\ell(Q)^2$,
\begin{equation}\tag{$\mathrm K_i$}\label{ass:K}
    \sup_{x_i\in\mathbb R^{n_i}}
    \int_0^{2t}\int_{Q^{***}}
        h_s^{(i)}(x_i-y_i)V_i(y_i)\,dy_i\,ds
    \leq
    C_{K,i}
    \left(\frac{t}{\ell(Q)^2}\right)^{\delta_i}.
\end{equation}

\medskip
\noindent
\textbf{Decay of the semigroup mass.}
There exist $C_{D,i}>0$ and $\varepsilon_i>0$ such that,
for every $Q\in\mathcal Q_i$ and integer $j\geq1$,
\begin{equation}\tag{$\mathrm D_i$}\label{ass:D}
    \operatorname*{ess\,sup}_{y_i\in Q^{****}}
    \int_{\mathbb R^{n_i}}
        k_{2^j\ell(Q)^2}^{(i)}(x_i,y_i)\,dx_i
    \leq C_{D,i}j^{-1-\varepsilon_i}.
\end{equation}

The constants are uniform over the covering cubes
and the indicated parameters.
They may differ between the two factors.

\subsection{Partitions of unity}
\label{subsec:partitions}

The disjointness of the cube interiors and
\eqref{ass:F} imply
\begin{equation}\label{eq:covering-overlap}
    \sum_{Q\in\mathcal Q_i}\mathbf1_{Q^{****}}
    \leq C
    \qquad\text{almost everywhere}.
\end{equation}
For each $i=1,2$, fix a smooth partition of unity
$\{\phi_{i,Q}\}_{Q\in\mathcal Q_i}$ satisfying
\begin{equation}\label{eq:partition-of-unity}
    0\leq\phi_{i,Q}\leq1,\qquad
    \operatorname{supp}\phi_{i,Q}\subset Q^*,
    \qquad
    \sum_{Q\in\mathcal Q_i}\phi_{i,Q}=1,
\end{equation}
and, for every multi-index $\alpha$,
\begin{equation}\label{eq:partition-derivative-bounds}
    |\partial^\alpha\phi_{i,Q}(x_i)|
    \leq C_{\alpha,i}\ell(Q)^{-|\alpha|}.
\end{equation}
Such partitions can be obtained by normalizing
smooth cutoffs equal to one on $Q$ and supported
in $Q^*$; \eqref{ass:F} and
\eqref{eq:covering-overlap} give the uniform
derivative bounds.

When $Q\in\mathcal Q_i$, we abbreviate
$\phi_{i,Q}$ to $\phi_Q$.
In particular,
\begin{equation}\label{eq:product-partition-of-unity}
    \sum_{Q_1\in\mathcal Q_1}
    \sum_{Q_2\in\mathcal Q_2}
        \phi_{Q_1}(x_1)\phi_{Q_2}(x_2)=1.
\end{equation}

\subsection{Square functions and the product Hardy space}
\label{subsec:hardy-space}

For $x_i\in\mathbb R^{n_i}$, set
\[
    \Gamma_i(x_i)
    =
    \left\{
        (y_i,t_i)\in\mathbb R^{n_i}\times(0,\infty):
        |y_i-x_i|<t_i
    \right\}.
\]
The one-parameter conical square function is
\begin{equation}\label{eq:one-parameter-square-function}
    S_{L_i}g(x_i)
    =
    \left(
        \iint_{\Gamma_i(x_i)}
            |t_i^2L_i e^{-t_i^2L_i}g(y_i)|^2
            \frac{dy_i\,dt_i}{t_i^{n_i+1}}
    \right)^{1/2},
    \qquad g\in L^2(\mathbb R^{n_i}).
\end{equation}
For $f\in L^2(\mathbb R^{n_1}\times\mathbb R^{n_2})$,
define
\begin{equation}\label{eq:product-square-function}
\begin{split}
    S_{L_1,L_2}f(x_1,x_2)
    =
    \Bigg(
        \iint_{\Gamma_1(x_1)}
        \iint_{\Gamma_2(x_2)}
        &\left|
            (t_1^2L_1e^{-t_1^2L_1})
            (t_2^2L_2e^{-t_2^2L_2})
            f(y_1,y_2)
        \right|^2
        \frac{dy_2\,dt_2}{t_2^{n_2+1}}
        \frac{dy_1\,dt_1}{t_1^{n_1+1}}
    \Bigg)^{1/2}.
\end{split}
\end{equation}

Both square functions are bounded on $L^2$:
\begin{equation}\label{eq:schrodinger-square-L2}
    \|S_{L_i}g\|_2\leq C\|g\|_2,
    \qquad
    \|S_{L_1,L_2}f\|_2\leq C\|f\|_2.
\end{equation}
Indeed, integrating first in the cone vertices
and then applying the spectral theorem reduces
these estimates to
\[
    \int_0^\infty
        (t^2\lambda)^2e^{-2t^2\lambda}\,\frac{dt}{t}
    =\frac18,
    \qquad \lambda>0.
\]
Moreover, $\ker L_i=\{0\}$: if $L_i u=0$, then
$\mathfrak q_i[u]=0$, so $\nabla u=0$ and
$u=0$ in $L^2(\mathbb R^{n_i})$.
The same spectral calculation therefore shows
that $S_{L_1,L_2}f=0$ almost everywhere implies
$f=0$.

\begin{definition}\label{def:schrodinger-product-hardy}
Let
\[
    \mathcal H^1_{L_1,L_2}
    =
    \left\{
        f\in L^2(\mathbb R^{n_1}\times\mathbb R^{n_2}):
        S_{L_1,L_2}f\in L^1
    \right\}.
\]
The space $H^1_{L_1,L_2}$ is the completion of
$\mathcal H^1_{L_1,L_2}$ with respect to
\[
    \|f\|_{H^1_{L_1,L_2}}
    =
    \|S_{L_1,L_2}f\|_1.
\]
\end{definition}

We will prove that this completion is canonically
embedded in $L^1$; see
\eqref{eq:schrodinger-completion-L1-embedding}.

Unless otherwise stated, implicit constants may
depend on the dimensions, the constants and exponents
in \eqref{ass:F}, \eqref{ass:K}, and \eqref{ass:D},
and the fixed auxiliary functions.
They are independent of the covering cubes,
the local scales, and the functions being estimated.

\subsection{Atoms and the main results}
\label{subsec:main-results}

Fix $\tau>1$ and $\rho>0$.
Let $\eta\in(0,1)$ be sufficiently small;
the required choice will be specified in
Corollary~\ref{cor:atoms-inside-enlargements}.
These parameters remain fixed throughout the paper.

For any cube $I$ and $\tau>0$, $\tau I$ denotes
the cube with the same centre as $I$ and side
length $\tau\ell(I)$. For $i=1,2$, let $\mathcal D_i$ be the standard
dyadic grid in $\mathbb R^{n_i}$.
Given $r_1,r_2>0$ and an open set $\Omega$ of
finite positive measure, let
$\mathcal M_{r_1,r_2}(\Omega)$ consist of all
rectangles
\[
    R=I\times J,
    \qquad I\in r_1\mathcal D_1,
    \qquad J\in r_2\mathcal D_2,
\]
which are contained in $\Omega$ and maximal
with respect to inclusion.
Here $r_i\mathcal D_i=\{r_iI:I\in\mathcal D_i\}$,
and maximality is taken within this fixed
product grid: no strictly larger rectangle
from the same grid containing $R$ is contained
in $\Omega$.
For $R=I\times J$, write
$\tau R=(\tau I)\times(\tau J)$.

\begin{definition}\label{def:covering-product-atom}
A function
$a\in L^2(\mathbb R^{n_1}\times\mathbb R^{n_2})$
is an atom associated with
$\mathcal Q_1\times\mathcal Q_2$ if there exist
$Q_i\in\mathcal Q_i$, $i=1,2$, and an open set
\[
    \Omega\subset Q_1^{**}\times Q_2^{**},
    \qquad 0<|\Omega|<\infty,
\]
such that, with $r_i=\eta\ell(Q_i)$,
the following conditions hold.

\begin{enumerate}
\item
The function $a$ vanishes almost everywhere
outside $\Omega$, and
\[
    \|a\|_2\leq|\Omega|^{-1/2}.
\]

\item
There are functions
$a_R\in L^2(\mathbb R^{n_1}\times\mathbb R^{n_2})$,
indexed by
$R\in\mathcal M_{r_1,r_2}(\Omega)$, such that
\[
    a=\sum_{R\in\mathcal M_{r_1,r_2}(\Omega)}a_R
    \quad\text{in }L^2,
    \qquad
    \sum_{R\in\mathcal M_{r_1,r_2}(\Omega)}
        \|a_R\|_2^2
    \leq|\Omega|^{-1}.
\]
Each $a_R$ vanishes almost everywhere outside $
    \tau R\cap(Q_1^{**}\times Q_2^{**}).
$

\item
For $R=I\times J$, the components satisfy
\[
    \int_{\mathbb R^{n_1}}a_R(x_1,x_2)\,dx_1=0
    \quad\text{for almost every }x_2
    \quad\text{if }\ell(I)<\rho r_1,
\]
and
\[
    \int_{\mathbb R^{n_2}}a_R(x_1,x_2)\,dx_2=0
    \quad\text{for almost every }x_1
    \quad\text{if }\ell(J)<\rho r_2.
\]
No cancellation in a variable is required
when the corresponding side length is $ \geq \rho r_i$.
\end{enumerate}
\end{definition}

Let us note that every atom satisfies
\begin{equation}\label{eq:covering-product-atom-L1}
    \|a\|_1
    \leq|\Omega|^{1/2}\|a\|_2
    \leq1.
\end{equation}

\begin{definition}\label{def:covering-product-atomic-space}
The space
$H^1_{\mathrm{at}}(\mathcal Q_1,\mathcal Q_2)$
consists of all $f\in L^1$ admitting a representation
\[
    f=\sum_{\nu=1}^{\infty}\lambda_\nu a_\nu
    \quad\text{in }L^1,
    \qquad
    \sum_{\nu=1}^{\infty}|\lambda_\nu|<\infty,
\]
where each $a_\nu$ satisfies
Definition~\ref{def:covering-product-atom}.
Its norm is
\[
    \|f\|_{H^1_{\mathrm{at}}(\mathcal Q_1,\mathcal Q_2)}
    =
    \inf\left\{
        \sum_{\nu=1}^{\infty}|\lambda_\nu|:
        f=\sum_{\nu=1}^{\infty}\lambda_\nu a_\nu
        \text{ in }L^1
    \right\}.
\]
The fixed parameters $\eta,\rho,\tau$ are
suppressed in the notation.
\end{definition}

By \eqref{eq:covering-product-atom-L1}, every
atomic series with summable coefficients
converges absolutely in $L^1$, and
\[
    \|f\|_1
    \leq
    \|f\|_{H^1_{\mathrm{at}}(\mathcal Q_1,\mathcal Q_2)}.
\]

\begin{theorem}\label{thm:main-atomic-characterization}
Let $L_i$ and $\mathcal Q_i$, $i=1,2$, be as
in Subsections~\ref{subsec:operators}
and~\ref{subsec:coverings}, and assume
\eqref{ass:F}, \eqref{ass:K}, and \eqref{ass:D}.
For fixed $\tau>1$ and $\rho>0$, choose $\eta>0$
sufficiently small as above.

Then the space $H^1_{L_1,L_2}$ from
Definition~\ref{def:schrodinger-product-hardy}
is canonically embedded in $L^1$, and
\[
    H^1_{L_1,L_2}
    =
    H^1_{\mathrm{at}}(\mathcal Q_1,\mathcal Q_2)
\]
with equivalent norms.

More precisely, every $f\in H^1_{L_1,L_2}$
admits an atomic representation
\[
    f=\sum_{\nu=1}^{\infty}\lambda_\nu a_\nu
\]
converging in both $H^1_{L_1,L_2}$ and $L^1$, with
\[
    \sum_{\nu=1}^{\infty}|\lambda_\nu|
    \leq C\|f\|_{H^1_{L_1,L_2}}.
\]
Conversely, every series of atoms from
Definition~\ref{def:covering-product-atom}
with summable coefficients converges in both
spaces and satisfies
\[
    \left\|
        \sum_{\nu=1}^{\infty}\lambda_\nu a_\nu
    \right\|_{H^1_{L_1,L_2}}
    \leq C\sum_{\nu=1}^{\infty}|\lambda_\nu|.
\]
The constants depend only on the dimensions,
the constants and exponents in the assumptions,
the fixed auxiliary functions, and
$\eta,\rho,\tau$.
\end{theorem}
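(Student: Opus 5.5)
The plan is to deduce Theorem~\ref{thm:main-atomic-characterization} from the localization formula of Theorem~\ref{thm:main-localization},
\[
    \|f\|_{H^1_{L_1,L_2}}
    \asymp
    \sum_{Q_1\in\mathcal Q_1}\sum_{Q_2\in\mathcal Q_2}
    \|\phi_{Q_1}\phi_{Q_2}f\|_{h^1_{\ell(Q_1),\ell(Q_2)}},
\]
together with a localized Chang--Fefferman decomposition for the classical local product spaces $h^1_{r_1,r_2}$. The constants in that decomposition must be uniform in $(r_1,r_2)$, and it must control supports. First I would prove the following local statement. If $g\in h^1_{r_1,r_2}$ is supported in $Q_1^*\times Q_2^*$ with $r_i\sim\ell(Q_i)$, then $g=\sum_k\mu_k b_k$, where each $b_k$ is an atom in the sense of Definition~\ref{def:covering-product-atom} for the pair $(Q_1,Q_2)$ and $\sum_k|\mu_k|\lesssim\|g\|_{h^1_{r_1,r_2}}$. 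By scaling $x_i\mapsto x_i/r_i$ this reduces to $r_1=r_2=1$. There I would run the standard argument: a discrete Calder\'on reproducing formula with compactly supported kernels and local (inhomogeneous) terms at scale $1$; level sets $\Omega_k=\{S g>2^k\}$ of a local product square function; and a grouping of dyadic rectangles according to $\Omega_k$ and the enlarged sets $\widetilde\Omega_k=\{M_s\mathbf 1_{\Omega_k}>1/2\}$. Rectangles with a side at or above the local scale come from the inhomogeneous part of the reproducing formula, so they carry no cancellation in that variable. This matches condition (3) with threshold $\rho r_i$.

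The delicate point, which I expect to be the main obstacle, is keeping $\Omega$ and every $a_R$ inside $Q_1^{**}\times Q_2^{**}$ while keeping the indexing by $\mathcal M_{r_1,r_2}(\Omega)$ for the fixed grid $r_1\mathcal D_1\times r_2\mathcal D_2$. I would proceed as follows.
\begin{itemize}
\item Choose $\eta$ small, as in Corollary~\ref{cor:atoms-inside-enlargements}, so that all rectangles used have sides at most $\eta\ell(Q_i)$ times a constant, and their $\tau$-enlargements around points of $Q_1^*\times Q_2^*$ stay in $Q_1^{**}\times Q_2^{**}$.
\item Intersect $\widetilde\Omega_k$ with the open set $\operatorname{int}(Q_1^{**}\times Q_2^{**})$. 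This changes measures only by constants, since $|\widetilde\Omega_k|\lesssim|\Omega_k|$.
\item Assign each grid rectangle in the construction to a maximal rectangle of $\Omega$ containing it, using Journ\'e-type bookkeeping. Then $\sum_R\|a_R\|_2^2$ is controlled by the $L^2$ duality estimate for the square function.
\end{itemize}

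Given this, the direction $H^1_{L_1,L_2}\subset H^1_{\mathrm{at}}$ follows. For $f\in\mathcal H^1_{L_1,L_2}$, apply Theorem~\ref{thm:main-localization} and decompose each $\phi_{Q_1}\phi_{Q_2}f$, which is supported in $Q_1^*\times Q_2^*$, at scales $\eta\ell(Q_i)$. Using \eqref{eq:product-partition-of-unity}, sum the pieces to get $f=\sum\lambda_\nu a_\nu$ with $\sum|\lambda_\nu|\lesssim\|f\|_{H^1_{L_1,L_2}}$. The norms $h^1_{\ell(Q_i)}$ and $h^1_{\eta\ell(Q_i)}$ are comparable with constants depending on $\eta$.

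For the converse, it suffices to show $\|a\|_{H^1_{L_1,L_2}}\le C$ for every atom; then absolute convergence gives the series estimate. For an atom attached to $(Q_1,Q_2)$, only pairs $(Q_1',Q_2')$ with $Q_i'^*\cap Q_i^{**}\neq\varnothing$ contribute to the localization formula. By \eqref{ass:F} and \eqref{eq:covering-overlap} there are boundedly many such pairs, and the corresponding scales are comparable to $\ell(Q_i)$. So it is enough to show that $\phi_{Q_1'}\phi_{Q_2'}a$ is uniformly bounded in $h^1_{\ell(Q_1'),\ell(Q_2')}$. This again reduces by scaling to scale one. There, multiplying by a smooth bump (with derivative bounds \eqref{eq:partition-derivative-bounds}) preserves the local product Hardy norm up to constants. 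The atom bound follows from the classical Chang--Fefferman/Journ\'e argument for local product atoms: an $L^2$ bound on $\widetilde\Omega$, and off $\widetilde\Omega$ Journ\'e's lemma combined with either partial cancellation or the compact size of rectangles at the local scale.

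Finally, the $L^1$ embedding of the completion follows. Cauchy sequences in $\mathcal H^1_{L_1,L_2}$ are Cauchy in $L^1$, because $\|f\|_1\le\|f\|_{H^1_{\mathrm{at}}}\lesssim\|f\|_{H^1_{L_1,L_2}}$. Injectivity comes from the localization formula, whose right-hand side is defined for $L^1$ limits and vanishes only for $f=0$. Convergence of the atomic series in both norms then comes from the tail bounds.
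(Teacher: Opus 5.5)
Your proposal is correct in outline and has the paper's architecture. It rests on the comparison of $\|S_{L_1,L_2}f\|_1$ with $\sum_{Q_1,Q_2}\|\phi_{Q_1}\phi_{Q_2}f\|_{h^1_{\ell(Q_1),\ell(Q_2)}}$ on the $L^2$ class, a support-controlled local Chang--Fefferman decomposition, a uniform atom bound, and an embedding of the completion into $L^1$. Three technical steps differ.

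\textbf{Support control.} The paper first cuts each $\phi_{Q_1}\phi_{Q_2}f$ into $O(\eta^{-n_1-n_2})$ bits on cubes $P_i$ of side $\eta\ell(Q_i)$ (Lemma~\ref{lem:finite-localization}). It then uses $M_{\mathrm{str}}\mathbf 1_{E_m}\leq M_1\mathbf 1_{cP_1}\,M_2\mathbf 1_{cP_2}$ to show that the open sets themselves lie in $AP_1\times AP_2\subset Q_1^{**}\times Q_2^{**}$. You instead run the construction at scales $\eta\ell(Q_i)$ on the whole piece and truncate to $\Omega_k'=\widetilde\Omega_k\cap\operatorname{int}(Q_1^{**}\times Q_2^{**})$. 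This works. Every rectangle with $c_Q>0$ lies within $(b+K)\eta\ell(Q_i)$ of $Q_1^*\times Q_2^*$, so if $\eta(\tau b+K)<1$, all pieces and their $\tau$-dilates lie in $\Omega_k'$. Normalizing by $\lambda_k=C_02^k|\widetilde\Omega_k|$ then gives $\|a_k\|_2^2\leq|\widetilde\Omega_k|^{-1}\leq|\Omega_k'|^{-1}$, and likewise for $\sum_R\|a_R\|_2^2$. Your route is shorter and avoids the factor $\eta^{-n_1-n_2}$ in \eqref{eq:finite-localization-norm}. The paper's route gives a self-contained lemma controlling the untruncated open sets for any $g\in h^1$ supported on one cube at the local scale.

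\textbf{Atom bound.} You go through the first inequality in \eqref{eq:product-localization-comparison} and finitely many neighbouring pairs. The paper applies Lemma~\ref{lem:product-supported} directly, which is what underlies that inequality anyway.

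\textbf{Injectivity.} You use localization. The paper argues on the square function via \eqref{eq:analysis-L1-L2-bound} on truncated parameter ranges. Yours is shorter given Section~\ref{sec:comparison}; the paper's is independent of it.

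Several points must be made explicit.

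First, Theorem~\ref{thm:main-localization} for general $f\in L^1$ is proved in the paper from Lemma~\ref{lem:completion-L1-identification} and \eqref{eq:atomic-to-schrodinger}, that is, from the theorem you are proving. You may therefore only use its $L^2$-class form, Theorem~\ref{thm:product-localization-comparison}. This suffices, because atoms and elements of $\mathcal H^1_{L_1,L_2}$ lie in $L^2$.

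Second, injectivity does not follow merely from the right-hand side vanishing at $f=0$. Take a Cauchy sequence $(f_m)$ with $f_m\to0$ in $L^1$, and apply Lemma~\ref{lem:local-Hardy-Fatou} to $f_m-f_k$ as $k\to\infty$. This gives $\sum_{Q_1,Q_2}\|\phi_{Q_1}\phi_{Q_2}f_m\|_{h^1_{\ell(Q_1),\ell(Q_2)}}\leq C\sup_{k\geq m}\|S_{L_1,L_2}(f_m-f_k)\|_1$. Then apply the first inequality in \eqref{eq:product-localization-comparison} to conclude $\|S_{L_1,L_2}f_m\|_1\to0$.

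Third, decomposing elements of the completion, not just of the $L^2$ class, needs the telescoping argument.

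Finally, the threshold $1/2$ in $\widetilde\Omega_k$ only gives $Q\subset\widetilde\Omega_k$, not $\operatorname{supp}u_Q\subset\widetilde\Omega_k$. Take $\theta<(2\tau^{n_1+n_2})^{-1}$ instead. Also choose the base side length $b\geq\rho$, so that a maximal rectangle with a side below $\rho r_i$ receives only pieces with cancellation in that variable.
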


We also obtain a characterization by classical
local product Hardy norms.
Here $h^1_{r_1,r_2}$ denotes the local product
Hardy space at scales $r_1,r_2$, defined in
Section~\ref{sec:local-hardy}.

\begin{theorem}\label{thm:main-localization}
Under the assumptions of
Theorem~\ref{thm:main-atomic-characterization},
a function $f\in L^1$ belongs to $H^1_{L_1,L_2}$
if and only if
\[
    \sum_{Q_1\in\mathcal Q_1}
    \sum_{Q_2\in\mathcal Q_2}
        \|\phi_{Q_1}\phi_{Q_2}f\|
            _{h^1_{\ell(Q_1),\ell(Q_2)}}<\infty.
\]
Moreover,
\[
    \|f\|_{H^1_{L_1,L_2}}
    \asymp
    \sum_{Q_1\in\mathcal Q_1}
    \sum_{Q_2\in\mathcal Q_2}
        \|\phi_{Q_1}\phi_{Q_2}f\|
            _{h^1_{\ell(Q_1),\ell(Q_2)}}.
\]
The constants are independent of $f$.
\end{theorem}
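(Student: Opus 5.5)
The plan is to reduce Theorem~\ref{thm:main-localization} to two ingredients developed later in the paper: a product comparison between $S_{L_1,L_2}$ and localized classical square functions, and the local product theory at scales $r_1,r_2$. Throughout I work first on the dense class $\mathcal H^1_{L_1,L_2}\subset L^2$ and pass to the completion at the end.

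\emph{Upper bound.} For $f\in\mathcal H^1_{L_1,L_2}$ I aim to prove
\[
    \sum_{Q_1,Q_2}\|\phi_{Q_1}\phi_{Q_2}f\|_{h^1_{\ell(Q_1),\ell(Q_2)}}
    \lesssim\|S_{L_1,L_2}f\|_1 .
\]
In one parameter this is the localization principle of \cite{DZ_Studia}, combined with the square-function/maximal-function equivalence of \cite{SongYan2018}. It states that $\sum_Q\|\phi_Q g\|_{h^1_{\ell(Q)}}\lesssim\|S_{L}g\|_1$. The proof compares $K_t$ with $H_t$ on $Q^{***}$ for $t\le\ell(Q)^2$ via the perturbation formula, which is where \eqref{ass:K} enters, and uses \eqref{ass:F} to control overlaps. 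I would restate this estimate as a family of linear bounds between $L^1$ norms of square functions, namely the local classical square function at scale $\ell(Q)$ applied to $\phi_Q g$ versus $S_L g$. By Rademacher randomization and the Hilbert-space Khintchine inequality, these bounds extend to $\ell^2$- and Hilbert-space-valued functions. I then apply the vector-valued estimate in $x_2$, treating the $x_1$-square-function structure of $S_{L_1}$ as the Hilbert space, and afterwards in $x_1$. This replaces $S_{L_1,L_2}f$ by $\sum_{Q_1,Q_2}\|S^{\rm loc}_{\ell(Q_1),\ell(Q_2)}(\phi_{Q_1}\phi_{Q_2}f)\|_1$, which is comparable to the $h^1$ norms in Section~\ref{sec:local-hardy}.

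\emph{Lower bound.} Given $g_{Q_1Q_2}=\phi_{Q_1}\phi_{Q_2}f$, I decompose each piece with the localized product atomic decomposition of Section~\ref{sec:local-hardy} at scales $\eta\ell(Q_i)$. Each atom obtained this way, with its open set and components, lies in $Q_1^{**}\times Q_2^{**}$; this is Corollary~\ref{cor:atoms-inside-enlargements}, and it uses $\operatorname{supp}\phi_Q\subset Q^*$ with $\eta$ small. Hence $\|S_{L_1,L_2}f\|_1\lesssim\sup_a\|S_{L_1,L_2}a\|_1\cdot\sum\|g_{Q_1Q_2}\|_{h^1}$, so it suffices to show $\|S_{L_1,L_2}a\|_1\le C$ uniformly over atoms of Definition~\ref{def:covering-product-atom}.

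\emph{Main obstacle.} I expect the atom estimate to be the hardest step. On $\tilde\Omega\times$-enlargements (the strong maximal enlargement of $\Omega$), Cauchy--Schwarz together with \eqref{eq:schrodinger-square-L2} gives $\|S a\|_{L^1(\tilde\Omega)}\lesssim 1$. Away from $\tilde\Omega$ I would split each component $a_R$ into the four cancellation regimes and argue as follows:
\begin{itemize}
\item In a variable with $\ell(I)<\rho r_1$, I use cancellation and the Gaussian/H\"older regularity of $t^2L_1e^{-t^2L_1}$ for $t\lesssim\ell(Q_1)$; the latter follows from \eqref{eq:heat-domination} and \eqref{ass:K}.
\item In a variable without cancellation, I use the mass decay \eqref{ass:D} for $t^2\ge\ell(Q_1)^2$. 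Writing $t^2Le^{-t^2L}=-t^2\partial_s e^{-sL}$ and summing over dyadic $j$, the $j^{-1-\varepsilon}$ bound supplies summability.
\end{itemize}
The off-diagonal sum over $R\in\mathcal M(\Omega)$ is then handled by Journé's covering lemma, as in Chang--Fefferman. The mixed regimes need these one-parameter bounds to be applied in tensor form.

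\emph{Passage to the completion.} Finally, the $L^1$ embedding of the completion follows from $\|f\|_1\le\|f\|_{H^1_{\rm at}}\lesssim\|Sf\|_1$ on the dense class. Combined with a Fatou-type argument for the $h^1$ sums, this extends both inequalities to all $f\in L^1$ and gives the stated characterization.
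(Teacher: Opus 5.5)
Your upper bound is the paper's argument: the one-parameter localization from \cite{DZ_Studia} and \cite{SongYan2018}, Khintchine lifting, and the Parseval iteration in $x_2$ and then $x_1$ of Lemma~\ref{lem:product-localization}. Your reduction of the lower bound to a uniform bound $\|S_{L_1,L_2}a\|_1\le C$ over the atoms of Corollary~\ref{cor:atoms-inside-enlargements} also matches the paper. The gap is that this atom estimate, the analytic heart of the lower bound, is not proved. You outline a direct Chang--Fefferman/Journé argument, but its inputs are not available as stated. Pointwise H\"older regularity, in the input variable, of the kernel of $t^2Le^{-t^2L}$ does not follow from the Gaussian domination \eqref{eq:heat-domination}. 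Extracting it from the Kato-type condition \eqref{ass:K}, uniformly in the cubes and for the time derivative, is a separate kernel-regularity theorem that your sketch takes for granted. Moreover, \eqref{ass:D} yields off $\gamma I$ only the logarithmic tail $\sum_{j\gtrsim\log\gamma}j^{-1-\varepsilon}\approx(\log\gamma)^{-\varepsilon}$, not the power decay $\gamma^{-\delta}$ on which Journé's lemma relies. This is harmless only because components without $x_1$-cancellation satisfy $\rho\eta\ell(Q_1)\le\ell(I)\le4\ell(Q_1)$, so they occupy boundedly many scales and have bounded embeddedness inside the enlargement of $Q_1^{**}\times Q_2^{**}$. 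Your outline never uses this, and the mixed regimes are left to ``tensor form''.

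The missing idea is that no product kernel analysis is needed: the randomization you already use runs equally well in the reverse direction. For $g\in L^2$ vanishing outside $Q^{**}$, Proposition~\ref{prop:one-parameter-comparison} gives the supported estimate \eqref{eq:one-parameter-supported-proved}, namely $\|S_Lg\|_1\lesssim\|g\|_{h^1_{\ell(Q)}}$. This comes from Lemma~\ref{lem:local-to-covering-atoms} (the Dziuba\'nski--Zienkiewicz decomposition into $H^1_{\mathcal Q}$-atoms) together with Corollary~\ref{cor:scalar-square-atomic}. Khintchine extends it to families with a common support, \eqref{eq:vector-supported-needed}. Apply this in $x_2$ to the components of $\mathcal B_{1,\ell(Q_1)}g$, and then in $x_1$ to the cone coefficients. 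This yields Lemma~\ref{lem:product-supported}: $\|S_{L_1,L_2}g\|_1\lesssim\|g\|_{h^1_{\ell(Q_1),\ell(Q_2)}}$ for every $g\in L^2$ supported in $Q_1^{**}\times Q_2^{**}$. The atom bound then follows from the classical local product estimate \eqref{eq:local-product-atom-bound} at scales $\eta\ell(Q_i)$ and the scale comparison \eqref{eq:local-scale-comparison-reverse}; see \eqref{eq:covering-product-atom-square}. Finally, your completion step only produces a continuous map $H^1_{L_1,L_2}\to L^1$. To make sense of membership of an $L^1$ function and of $\|f\|_{H^1_{L_1,L_2}}$, you also need this map to be injective. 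The paper proves injectivity separately in Lemma~\ref{lem:completion-L1-identification}; alternatively, you must import it explicitly from Theorem~\ref{thm:main-atomic-characterization}.
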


\section{Local product Hardy spaces}
\label{sec:local-hardy}

We collect the local Hardy space estimates needed
to prove Theorems~\ref{thm:main-atomic-characterization}
and~\ref{thm:main-localization}.
We first establish scale comparison and cutoff
estimates, and then construct local atomic
decompositions with controlled supports.
The final corollary places these atoms inside
the enlarged covering cubes from
Definition~\ref{def:covering-product-atom}.

\subsection{Definitions and basic properties}
\label{subsec:local-product-hardy}

For $i=1,2$, fix
$\psi^{(i)}_0\in C^\infty_c(\mathbb R^{n_i})$
with $\int\psi^{(i)}_0=1$, and set
\[
    \psi^{(i)}_j(x_i)
    =
    2^{jn_i}\psi^{(i)}_0(2^jx_i)
    -
    2^{(j-1)n_i}\psi^{(i)}_0(2^{j-1}x_i),
    \qquad j\geq1.
\]
Thus $\int\psi^{(i)}_j=0$ for $j\geq1$.
For $r>0$, define
\[
    \psi^{(i)}_{j,r}(x_i)
    =
    r^{-n_i}\psi^{(i)}_j(x_i/r),
    \qquad j\geq0.
\]
We write $*_i$ for convolution in $x_i$ only.

For functions on $\mathbb R^{n_i}$, set
\begin{equation}\label{eq:B}
    \mathcal B_{i,r}g
    =
    \bigl(\psi^{(i)}_{j,r}*g\bigr)_{j\geq0},
    \qquad
    \|g\|_{h^1_r}
    =
    \|\mathcal B_{i,r}g\|_{L^1(\ell^2)}.
\end{equation}
The space $h^1_r(\mathbb R^{n_i})$ consists of
the functions $g\in L^1(\mathbb R^{n_i})$
for which this norm is finite.
When applied to a function of both spatial
variables, $\mathcal B_{i,r}$ acts in $x_i$ only.

For $r_1,r_2>0$, define
\begin{equation}\label{eq:local-product-square}
    S^{\mathrm{loc}}_{r_1,r_2}f
    =
    \left(
        \sum_{j,k\geq0}
        \left|
            \psi^{(1)}_{j,r_1}*_1
            \psi^{(2)}_{k,r_2}*_2f
        \right|^2
    \right)^{1/2}.
\end{equation}
The local product Hardy space is
\[
    h^1_{r_1,r_2}
    =
    \left\{
        f\in L^1(\mathbb R^{n_1}\times\mathbb R^{n_2}):
        S^{\mathrm{loc}}_{r_1,r_2}f\in L^1
    \right\},
\]
with norm
\begin{equation}\label{eq:local-product-Hardy-norm}
    \|f\|_{h^1_{r_1,r_2}}
    =
    \|S^{\mathrm{loc}}_{r_1,r_2}f\|_1.
\end{equation}
The indices $j=0$ and $k=0$ retain the
low-frequency contribution in each variable.

By \cite[Theorem~1.2]{DingLuZhu2019}, different
choices of $\psi^{(i)}_0$ give equivalent norms.
The resulting space agrees with the space
of tempered distributions introduced in
\cite[Definition~1.1]{DingLuZhu2019}.
Indeed, the atomic decomposition in
\cite[Theorem~1.5]{DingLuZhu2019}, with $p=1$,
represents every element of that space by
an $L^1$-convergent series, since the atoms
have uniformly bounded $L^1$ norms and the
coefficients are summable.
Finally, \cite[Theorem~1.3]{DingLuZhu2019}
gives density of Schwartz functions.
These results are stated at unit scales;
independent dilations give the assertions
for arbitrary $r_1,r_2>0$.

Independent dilations reduce arbitrary scales
to unit scales. Namely, if
\[
    \widetilde f(u_1,u_2)
    =
    r_1^{n_1}r_2^{n_2}f(r_1u_1,r_2u_2),
\]
then
\begin{equation}\label{eq:local-product-dilation}
    \|\widetilde f\|_{h^1_{1,1}}
    =
    \|f\|_{h^1_{r_1,r_2}}.
\end{equation}
In particular, the constants in the equivalence
of the local norms can be chosen independently
of $r_1,r_2$.

\begin{lemma}\label{lem:local-heat-characterization}
For $i=1,2$, $r>0$, and
$g\in L^1(\mathbb R^{n_i})$,
\begin{equation}\label{eq:local-heat-characterization}
    C^{-1}\|\mathcal B_{i,r}g\|_{L^1(\ell^2)}
    \leq
    \left\|
        \sup_{0<t\leq r^2}|H_t^{(i)}g|
    \right\|_1
    \leq
    C\|\mathcal B_{i,r}g\|_{L^1(\ell^2)}.
\end{equation}
The inequalities are understood with values in
$[0,\infty]$, and $C$ is independent of $r$ and $g$.
\end{lemma}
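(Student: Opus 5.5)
Scale invariance is the first step. Define $\widetilde g(u)=r^{n_i}g(ru)$. Then $\psi^{(i)}_{j,r}*g(ru)=\psi^{(i)}_j*\widetilde g(u)$ and $H^{(i)}_{t}g(ru)=H^{(i)}_{t/r^2}\widetilde g(u)$, so
\[
\|\mathcal B_{i,r}g\|_{L^1(\ell^2)}=\|\mathcal B_{i,1}\widetilde g\|_{L^1(\ell^2)},
\qquad
\Bigl\|\sup_{0<t\le r^2}|H^{(i)}_tg|\Bigr\|_1=\Bigl\|\sup_{0<t\le 1}|H^{(i)}_t\widetilde g|\Bigr\|_1 .
\]
Both sides of \eqref{eq:local-heat-characterization} therefore transform identically, and it suffices to treat $r=1$ with a constant depending only on $n_i$ and $\psi^{(i)}_0$. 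At $r=1$ the statement is the one-parameter version of Goldberg's theory \cite{Goldberg}: the local square function built from $\psi^{(i)}_j$ and the local heat maximal function both characterize $h^1(\mathbb R^{n_i})$. I will reduce both quantities to the grand local maximal function, or equivalently to the local atomic norm.

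For the right-hand inequality I use the atomic route. First I show that finiteness of $\|\mathcal B_{i,1}g\|_{L^1(\ell^2)}$ gives a local atomic decomposition $g=\sum\lambda_k a_k$ with $\sum|\lambda_k|\lesssim\|\mathcal B_{i,1}g\|_{L^1(\ell^2)}$. Each atom is either an $L^\infty$ atom on a ball of radius $<1$ with mean zero, or one on a ball of radius $\ge1$ without cancellation. This is the one-parameter case of the Littlewood--Paley/atomic equivalence; it can be quoted from \cite[Theorem~1.5]{DingLuZhu2019} in the one-parameter specialization, or proved by the standard discrete Calderón reproducing formula and tent space argument.

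Next I check, uniformly over atoms, that $\|\sup_{0<t\le1}|H^{(i)}_ta|\|_1\le C$. On $2B$ I use $L^2$ boundedness of the heat maximal function. Off $2B$ there are two cases. For mean-zero atoms with radius $r_B<1$ I use the gradient bound for $h_t$, which gives decay $r_B|x-x_B|^{-n-1}$. For large atoms I use $t\le 1\le r_B$, so the Gaussian factor $e^{-|x-x_B|^2/8}$ is integrable away from $2B$. Summing over atoms gives the bound. The atomic series converges in $L^1$, and since $H_t$ is a positive contraction, the sublinearity of the supremum passes through the sum.

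For the left-hand inequality I go through the grand local maximal function. Goldberg \cite{Goldberg} shows that $\|\sup_{0<t\le1}|H_tg|\|_1$ controls $\|\sup_{0<t\le 1}\sup_{\varphi}|\varphi_t*g|\|_1$ over normalized test functions $\varphi$, and that the latter yields a local atomic decomposition via a Calderón--Zygmund/Whitney argument. It then remains to show that $\|\mathcal B_{i,1}a\|_{L^1(\ell^2)}\le C$ for each local atom. This follows from the $L^2$ bound $\|(\sum_j|\psi_j*a|^2)^{1/2}\|_2\lesssim\|a\|_2$ on $2B$, together with off-support decay. For mean-zero atoms the kernels $\psi_j$ have compact support of size $2^{-j}$, so only $j$ with $2^{-j}\gtrsim \operatorname{dist}(x,B)$ contribute, and each is $O(2^{j(n+1)}r_B|B|^{-1}\|a\|_1)$ there. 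For large atoms, the compact supports keep everything inside a fixed dilate of $B$ when $r_B\ge1$.

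The main obstacle is the passage from the local heat maximal function to the grand local maximal function. The heat kernel does not have compact support, and the truncation $t\le1$ must be compared with test functions at scales $\le1$. I would handle this as in Goldberg, or Uchiyama's argument, through a tangential maximal function estimate restricted to $t\le1$. Alternatively, I would cite the known equivalence of local maximal characterizations with respect to any kernel of nonzero integral and Gaussian decay. I will also keep track that all values may be $+\infty$: each implication is proved by showing that finiteness of one side yields an atomic decomposition, so that when one side is infinite the inequality is trivially true.
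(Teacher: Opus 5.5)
Your proposal is correct and takes essentially the paper's route: reduce to $r=1$ by an $L^1$-normalized dilation, then identify both quantities with Goldberg's $h^1$ norm via his local maximal theorem for the Gaussian and the Littlewood--Paley characterization, where your atom-by-atom checks just unpack those citations. There are two small slips: the pointwise dilation identities need a factor $r^{n_i}$, for example $\psi^{(i)}_j*\widetilde g(u)=r^{n_i}(\psi^{(i)}_{j,r}*g)(ru)$ (the norm identities you state are nevertheless correct), and the off-support bound for a mean-zero atom should read $O(2^{j(n_i+1)}r_B\|a\|_1)$ without the factor $|B|^{-1}$, which would otherwise destroy uniformity in $r_B$.
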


\begin{proof}
At scale $r=1$, Goldberg's local maximal
characterization \cite[Theorem~1]{Goldberg},
applied to the Gaussian
$
    \varphi(x)
    =
    (4\pi)^{-n_i/2}\exp(-|x|^2/4)
$,
gives
\[
    \|g\|_{h^1(\mathbb R^{n_i})}
    \asymp
    \left\|
        \sup_{0<t\leq1}|H_t^{(i)}g|
    \right\|_1.
\]
Indeed, for
$\varphi_s(x)=s^{-n_i}\varphi(x/s)$,
we have $\varphi_s*g=H_{s^2}^{(i)}g$.

The one-parameter Littlewood--Paley
characterization \cite{Rychkov2001}, recalled in
\cite[p.~354]{DingLuZhu2019}, gives
\[
    \|g\|_{h^1(\mathbb R^{n_i})}
    \asymp
    \|\mathcal B_{i,1}g\|_{L^1(\ell^2)}.
\]
Combining these estimates proves the assertion
at unit scale.

For general $r>0$, apply the unit-scale estimate
to $g(r\,\cdot)$.
The identities
$
    \mathcal B_{i,1}\bigl(g(r\,\cdot)\bigr)(x)
    =
    (\mathcal B_{i,r}g)(rx)
$
and
$
    H_t^{(i)}\bigl(g(r\,\cdot)\bigr)(x)
    =
    (H_{r^2t}^{(i)}g)(rx)
$
show that both $L^1$ norms acquire the factor
$r^{-n_i}$.
Cancelling this factor proves
\eqref{eq:local-heat-characterization}.
\end{proof}

Since $H_t^{(i)}g\to g$ in $L^1$ as $t\downarrow0$,
\eqref{eq:local-heat-characterization} implies
\begin{equation}\label{eq:local-L1-embedding}
    \|g\|_1
    \leq
    \left\|
        \sup_{0<t\leq r^2}|H_t^{(i)}g|
    \right\|_1
    \leq C\|g\|_{h^1_r},
    \qquad g\in h^1_r,
\end{equation}
with $C$ independent of $r$.

We will use the following extension of scalar-input
estimates to sequences.

\begin{lemma}\label{lem:sequence-extension}
Let $A$ and $B$ be linear operators on a common
vector space of scalar-valued functions, taking
values in separable Hilbert spaces $H_A$ and $H_B$.
Suppose that
\[
    \|Bg\|_{L^1(H_B)}
    \leq C_0\|Ag\|_{L^1(H_A)}
\]
whenever the right-hand side is finite.
Then
\begin{equation}\label{eq:sequence-extension}
    \int
        \left(
            \sum_\nu\|Bg_\nu(x)\|_{H_B}^2
        \right)^{1/2}dx
    \leq
    CC_0\int
        \left(
            \sum_\nu\|Ag_\nu(x)\|_{H_A}^2
        \right)^{1/2}dx
\end{equation}
for every finite or countable family in the
common domain for which the right-hand side
is finite.
The constant $C$ is independent of the number
of inputs and of the Hilbert spaces.
\end{lemma}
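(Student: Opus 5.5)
Randomize with Rademacher signs and apply the scalar hypothesis to random linear combinations, using the Hilbert-space Khintchine inequality to return to square sums. Let $(\varepsilon_\nu)$ be independent Rademacher variables on a probability space $(\Omega_0,\mathbb P)$. First take a finite family $g_1,\dots,g_N$, so that the right-hand side of \eqref{eq:sequence-extension} is automatically a finite sum. For each fixed $\omega\in\Omega_0$, set $g^\omega=\sum_\nu\varepsilon_\nu(\omega)g_\nu$. This lies in the common domain, and linearity gives $Ag^\omega=\sum_\nu\varepsilon_\nu(\omega)Ag_\nu$ and $Bg^\omega=\sum_\nu\varepsilon_\nu(\omega)Bg_\nu$.

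Next I need finiteness of the right-hand side for each $\omega$. Pointwise,
\[
    \|Ag^\omega(x)\|_{H_A}\leq\sum_\nu\|Ag_\nu(x)\|_{H_A}\leq\sqrt N\Bigl(\sum_\nu\|Ag_\nu(x)\|_{H_A}^2\Bigr)^{1/2},
\]
so $\|Ag^\omega\|_{L^1(H_A)}<\infty$ whenever the right-hand side of \eqref{eq:sequence-extension} is finite. The hypothesis therefore applies:
\[
    \int\|Bg^\omega(x)\|_{H_B}\,dx\leq C_0\int\|Ag^\omega(x)\|_{H_A}\,dx
\]
for every $\omega$.

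Now take expectations and use Fubini (everything is nonnegative and measurable, and $\Omega_0$ can be taken finite for a finite family). By Khintchine--Kahane in a Hilbert space, for $h_\nu\in H$,
\[
    c\Bigl(\sum_\nu\|h_\nu\|^2\Bigr)^{1/2}\leq\mathbb E\Bigl\|\sum_\nu\varepsilon_\nu h_\nu\Bigr\|\leq\Bigl(\sum_\nu\|h_\nu\|^2\Bigr)^{1/2}.
\]
The upper bound follows from Cauchy--Schwarz and orthogonality, since $\mathbb E\|\sum\varepsilon_\nu h_\nu\|^2=\sum\|h_\nu\|^2$. The lower bound, with an absolute constant $c$ (e.g.\ $c=2^{-1/2}$ via Kahane or a Paley--Zygmund/fourth-moment argument), is independent of $H$ and $N$. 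Applying the lower bound pointwise with $H=H_B$ on the left and the upper bound with $H=H_A$ on the right gives \eqref{eq:sequence-extension} with $C=c^{-1}$.

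Finally, pass to countable families by monotone convergence. For a countable family with finite right-hand side, each finite truncation has right-hand side bounded by the full one. The left-hand side of \eqref{eq:sequence-extension} for the truncations increases to the full left-hand side, because the integrand $(\sum_{\nu\leq N}\|Bg_\nu\|^2)^{1/2}$ increases pointwise.

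The only delicate point is the dimension-free lower Khintchine constant in the Hilbert-valued setting. I would justify it by the fourth-moment bound $\mathbb E\|X\|^4\leq 3(\mathbb E\|X\|^2)^2$ for $X=\sum\varepsilon_\nu h_\nu$, which a direct expansion in the Hilbert inner product confirms, combined with Hölder's inequality $\mathbb E\|X\|^2\leq(\mathbb E\|X\|)^{2/3}(\mathbb E\|X\|^4)^{1/3}$. This yields $\mathbb E\|X\|\geq 3^{-1/2}(\mathbb E\|X\|^2)^{1/2}$, uniformly in $N$ and the Hilbert space.
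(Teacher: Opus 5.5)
Your proof is correct and follows the paper's argument: Rademacher randomization, the scalar hypothesis applied to each signed sum, the Hilbert-valued Khintchine inequality, and monotone convergence for countable families. The only differences are these. You obtain the lower Khintchine bound directly from the fourth-moment bound and H\"older, with constant $3^{-1/2}$ and without using separability, whereas the paper reduces it coordinatewise to the scalar inequality through an orthonormal basis and Minkowski's integral inequality. You also explicitly check that each signed sum has finite right-hand side, which the paper leaves implicit.
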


\begin{proof}
Let
\[
    r_\nu(\tau)
    =
    \operatorname{sgn}\bigl(\sin(2^\nu\pi\tau)\bigr),
    \qquad \nu\geq1,\quad 0\leq\tau\leq1,
\]
be the Rademacher functions; see
\cite[Appendix~C.1]{GrafakosClassical}.
Their values at the zeros of the sine function
are immaterial.

We first verify the Hilbert-valued form of
Khintchine's inequality:
\begin{equation}\label{eq:sequence-khintchine}
    \int_0^1
        \left\|
            \sum_{\nu=1}^N r_\nu(\tau)v_\nu
        \right\|_H\,d\tau
    \asymp
    \left(
        \sum_{\nu=1}^N\|v_\nu\|_H^2
    \right)^{1/2}.
\end{equation}
Choose an orthonormal basis of $H$ and write
$v_{\nu,j}$ for the coordinates of $v_\nu$.
The scalar Khintchine inequality
\cite[Appendix~C.2]{GrafakosClassical} gives
\[
    \left(
        \sum_{\nu=1}^N|v_{\nu,j}|^2
    \right)^{1/2}
    \leq
    C\int_0^1
        \left|
            \sum_{\nu=1}^N r_\nu(\tau)v_{\nu,j}
        \right|\,d\tau.
\]
Taking the $\ell^2$ norm over the coordinates
and applying Minkowski's integral inequality,
we obtain
\begin{align*}
    \left(
        \sum_{\nu=1}^N\|v_\nu\|_H^2
    \right)^{1/2}
    &\leq
    C\left(
        \sum_j
        \left[
            \int_0^1
                \left|
                    \sum_{\nu=1}^N
                        r_\nu(\tau)v_{\nu,j}
                \right|\,d\tau
        \right]^2
    \right)^{1/2}\\
    &\leq
    C\int_0^1
        \left(
            \sum_j
            \left|
                \sum_{\nu=1}^N
                    r_\nu(\tau)v_{\nu,j}
            \right|^2
        \right)^{1/2}d\tau=
    C\int_0^1
        \left\|
            \sum_{\nu=1}^N r_\nu(\tau)v_\nu
        \right\|_H\,d\tau.
\end{align*}
For infinitely many coordinates, the argument
is first applied to finite truncations and
then extended by monotone convergence.

For the opposite inequality, Cauchy--Schwarz
in $\tau$ gives
\[
    \int_0^1
        \left\|
            \sum_{\nu=1}^N r_\nu(\tau)v_\nu
        \right\|_H\,d\tau
    \leq
    \left(
        \int_0^1
        \left\|
            \sum_{\nu=1}^N r_\nu(\tau)v_\nu
        \right\|_H^2\,d\tau
    \right)^{1/2}.
\]
By the orthogonality relation
$
    \int_0^1r_\nu(\tau)r_\mu(\tau)\,d\tau
    =\delta_{\nu\mu},
$
the square of the last expression is
\[
    \sum_{\nu,\mu=1}^N
        \langle v_\nu,v_\mu\rangle_H
        \int_0^1r_\nu(\tau)r_\mu(\tau)\,d\tau
    =
    \sum_{\nu=1}^N\|v_\nu\|_H^2.
\]
This proves \eqref{eq:sequence-khintchine}.

Now apply the assumed operator estimate to
$\sum_{\nu=1}^N r_\nu(\tau)g_\nu$ and integrate
over $\tau\in[0,1]$.
Linearity, Tonelli's theorem, and
\eqref{eq:sequence-khintchine} yield
\begin{align*}
   &\int
        \left(
            \sum_{\nu=1}^N\|Bg_\nu(x)\|_{H_B}^2
        \right)^{1/2}dx
    \leq
    C\int_0^1
        \left\|
            B\left(
                \sum_{\nu=1}^N r_\nu(\tau)g_\nu
            \right)
        \right\|_{L^1(H_B)}d\tau\\
    \leq &
    CC_0\int_0^1
        \left\|
            A\left(
                \sum_{\nu=1}^N r_\nu(\tau)g_\nu
            \right)
        \right\|_{L^1(H_A)}d\tau\leq
    CC_0\int
        \left(
            \sum_{\nu=1}^N\|Ag_\nu(x)\|_{H_A}^2
        \right)^{1/2}dx.
\end{align*}
The constant is independent of $N$.
Monotone convergence proves the assertion
for countable families.
\end{proof}

\begin{lemma}\label{lem:local-product-L1-embedding}
For every $r_1,r_2>0$ and $g\in h^1_{r_1,r_2}$,
\begin{equation}\label{eq:product-local-L1-embedding}
    \|g\|_1\leq C\|g\|_{h^1_{r_1,r_2}},
\end{equation}
where $C$ is independent of $r_1,r_2$ and $g$.
\end{lemma}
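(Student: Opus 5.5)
The plan is to apply the one-parameter embedding \eqref{eq:local-L1-embedding} once in each variable, using Lemma~\ref{lem:sequence-extension} to pass from scalar inputs to the $\ell^2$-valued family indexed by the second variable. By the dilation identity \eqref{eq:local-product-dilation}, and since the $L^1$ norm is invariant under $f\mapsto\widetilde f$, it suffices to treat $r_1=r_2=1$. For fixed $x_2$, the constant will then automatically be independent of the scales.

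The first step works in the variable $x_1$. For each fixed $k\geq0$, set $g_k=\psi^{(2)}_{k,1}*_2g$. Since $g\in L^1$, Fubini shows that for almost every $x_2$ the function $g_k(\cdot,x_2)$ lies in $L^1(\mathbb R^{n_1})$. We apply Lemma~\ref{lem:sequence-extension} in $x_1$ with $A=\mathcal B_{1,1}$ (values in $\ell^2$), $B=\mathrm{id}$ (values in $\mathbb C$), and the estimate $\|h\|_1\leq C\|\mathcal B_{1,1}h\|_{L^1(\ell^2)}$ from \eqref{eq:local-L1-embedding}. That estimate holds whenever the right-hand side is finite, for $h\in L^1(\mathbb R^{n_1})$, because finiteness means $h\in h^1_1$. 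Applying the lemma to the family $\{g_k(\cdot,x_2)\}_k$ and integrating in $x_2$ gives
\[
\int\Bigl(\sum_{k}|g_k(x_1,x_2)|^2\Bigr)^{1/2}dx
\leq C\int\Bigl(\sum_{j,k}|\psi^{(1)}_{j,1}*_1\psi^{(2)}_{k,1}*_2g|^2\Bigr)^{1/2}dx
= C\|g\|_{h^1_{1,1}}.
\]
For almost every $x_2$, where the inner right-hand side is finite, this is exactly \eqref{eq:sequence-extension}. Where it is infinite, the inequality is trivial.

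The second step works in the variable $x_2$. The left-hand side above equals $\int\|\mathcal B_{2,1}g(x_1,\cdot)(x_2)\|_{\ell^2}\,dx$. For almost every $x_1$ the slice $g(x_1,\cdot)$ lies in $L^1(\mathbb R^{n_2})$. Whenever $\|\mathcal B_{2,1}g(x_1,\cdot)\|_{L^1(\ell^2)}<\infty$, which by Tonelli holds for almost every $x_1$, the one-parameter embedding \eqref{eq:local-L1-embedding} in $x_2$ gives $\|g(x_1,\cdot)\|_{L^1}\leq C\|\mathcal B_{2,1}g(x_1,\cdot)\|_{L^1(\ell^2)}$. Integrating in $x_1$ yields $\|g\|_1\leq C^2\|g\|_{h^1_{1,1}}$.

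The main obstacle is measure-theoretic bookkeeping rather than any new estimate. We must justify that the convolutions $\psi^{(2)}_{k,1}*_2g$ are integrable slices for almost every $x_2$, simultaneously in $k$; this holds because there are countably many $k$ and $\|\psi*_2g\|_1\leq\|\psi\|_1\|g\|_1$. We must also check that the partial convolutions commute, so that $\mathcal B_{1,1}$ applied to $g_k$ gives the double convolution. Both points follow from Fubini, since $g\in L^1$ and the $\psi$'s are integrable.
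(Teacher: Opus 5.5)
Your proof is correct and is essentially the paper's argument with the roles of the variables interchanged: the paper first applies Lemma~\ref{lem:sequence-extension} in $x_2$ to the family $\bigl(\psi^{(1)}_{j,r_1}*_1g\bigr)(x_1,\cdot)$ indexed by $j$ and then uses \eqref{eq:local-L1-embedding} in $x_1$, whereas you remove the $x_1$-index first and finish with the scalar estimate in $x_2$. Your preliminary reduction to $r_1=r_2=1$ is harmless but unnecessary, since \eqref{eq:local-L1-embedding} is already uniform in $r$ and the paper works directly at arbitrary scales.
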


\begin{proof}
Apply Lemma~\ref{lem:sequence-extension} to
\eqref{eq:local-L1-embedding}, with the identity
operator on the left.
For every finite family
$q_0,\ldots,q_J\in h^1_{r_2}$, this gives
\[
    \int_{\mathbb R^{n_2}}
        \left(
            \sum_{j=0}^J|q_j(x_2)|^2
        \right)^{1/2}dx_2
    \leq
    C\int_{\mathbb R^{n_2}}
        \left(
            \sum_{j=0}^J
            \|\mathcal B_{2,r_2}q_j(x_2)\|_{\ell^2}^2
        \right)^{1/2}dx_2.
\]

For almost every fixed $x_1$, consider
$
    q_j(x_2)
    =
    \bigl(\psi^{(1)}_{j,r_1}*_1g\bigr)(x_1,x_2)
$.
Young's inequality gives
\[
    \|\psi^{(1)}_{j,r_1}*_1g\|_1
    \leq
    \|\psi^{(1)}_{j,r_1}\|_1\|g\|_1,
\]
so $q_j\in L^1(\mathbb R^{n_2})$ for almost
every $x_1$.
Moreover,
\[
    \|\mathcal B_{2,r_2}q_j(x_2)\|_{\ell^2}
    =
    \left(
        \sum_{k\geq0}
        \left|
            \psi^{(1)}_{j,r_1}*_1
            \psi^{(2)}_{k,r_2}*_2g(x_1,x_2)
        \right|^2
    \right)^{1/2}
    \leq S^{\mathrm{loc}}_{r_1,r_2}g(x_1,x_2).
\]
By Fubini's theorem, the latter function is
integrable in $x_2$ for almost every $x_1$.
Thus all $q_j$ belong to $h^1_{r_2}$ outside
a common exceptional null set. For almost every fixed $x_1$, apply the sequence
estimate to $q_0,\ldots,q_J$.
By the definition of $\mathcal B_{2,r_2}$,
\[
    \sum_{j=0}^J
        \|\mathcal B_{2,r_2}q_j(x_2)\|_{\ell^2}^2
    =
    \sum_{j=0}^J\sum_{k\geq0}
        \left|
            \psi^{(1)}_{j,r_1}*_1
            \psi^{(2)}_{k,r_2}*_2g(x_1,x_2)
        \right|^2.
\]
This is bounded by
$\bigl(S^{\mathrm{loc}}_{r_1,r_2}g(x_1,x_2)\bigr)^2$,
because the latter includes all indices $j\geq0$.
Consequently,
\[
    \int_{\mathbb R^{n_2}}
        \left(
            \sum_{j=0}^J
            |\psi^{(1)}_{j,r_1}*_1g(x_1,x_2)|^2
        \right)^{1/2}dx_2
    \leq
    C\int_{\mathbb R^{n_2}}
        S^{\mathrm{loc}}_{r_1,r_2}g(x_1,x_2)\,dx_2.
\]
Integrating in $x_1$ gives a bound independent
of $J$. The expression on the left increases
as $J\to\infty$, so monotone convergence yields
\begin{equation}\label{eq:partial-local-square-L1}
\begin{split}
    &\int_{\mathbb R^{n_2}}
     \int_{\mathbb R^{n_1}}
        \left(
            \sum_{j\geq0}
            |\psi^{(1)}_{j,r_1}*_1g(x_1,x_2)|^2
        \right)^{1/2}dx_1\,dx_2\leq C\|g\|_{h^1_{r_1,r_2}}.
\end{split}
\end{equation}

We now apply the one-parameter $L^1$ estimate
in the first variable.
Since $g\in L^1$, Fubini's theorem gives
$
    g(\cdot,x_2)\in L^1(\mathbb R^{n_1})
    \quad\text{for almost every }x_2
$.
For such a fixed $x_2$, convolution in $x_1$
is precisely convolution of the function
$g(\cdot,x_2)$:
\[
    \bigl(\psi^{(1)}_{j,r_1}*g(\cdot,x_2)\bigr)(x_1)
    =
    \bigl(\psi^{(1)}_{j,r_1}*_1g\bigr)(x_1,x_2).
\]
Thus the inner integral in
\eqref{eq:partial-local-square-L1} is exactly
$\|g(\cdot,x_2)\|_{h^1_{r_1}}$.
It is finite for almost every $x_2$, and
\eqref{eq:local-L1-embedding} therefore gives
\[
    \int_{\mathbb R^{n_1}}|g(x_1,x_2)|\,dx_1
    \leq C\|g(\cdot,x_2)\|_{h^1_{r_1}}.
\]
Finally, integrate in $x_2$ and use
\eqref{eq:partial-local-square-L1}:
\[
    \|g\|_1
    \leq
    C\int_{\mathbb R^{n_2}}
        \|g(\cdot,x_2)\|_{h^1_{r_1}}\,dx_2
    \leq C\|g\|_{h^1_{r_1,r_2}}.
\]
This proves \eqref{eq:product-local-L1-embedding}.
\end{proof}

\begin{lemma}\label{lem:local-Hardy-Fatou}
Fix $r_1,r_2>0$.
If $f_m\to f$ in $L^1$, then
\[
    \|f\|_{h^1_{r_1,r_2}}
    \leq
    \liminf_{m\to\infty}
        \|f_m\|_{h^1_{r_1,r_2}},
\]
where both sides are allowed to be infinite.
The analogous assertion holds for the
one-parameter local Hardy norm.
\end{lemma}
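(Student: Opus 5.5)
The plan is to combine pointwise convergence of the convolutions with Fatou's lemma, applied twice: once for the sum over the indices and once for the spatial integral. We may assume the right-hand side is finite. Passing to a subsequence, we may also assume that $\|f_m\|_{h^1_{r_1,r_2}}$ converges to the $\liminf$. Since $f_m\to f$ in $L^1$, after a further subsequence we have pointwise convergence as described below.

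Fix $j,k\geq0$. The kernel $\psi^{(1)}_{j,r_1}\otimes\psi^{(2)}_{k,r_2}$ is bounded and compactly supported, so convolution with it maps $L^1$ boundedly into $L^\infty$. Since $\|f_m-f\|_1\to0$, we get
\[
    \psi^{(1)}_{j,r_1}*_1\psi^{(2)}_{k,r_2}*_2f_m
    \longrightarrow
    \psi^{(1)}_{j,r_1}*_1\psi^{(2)}_{k,r_2}*_2f
\]
uniformly on $\mathbb R^{n_1}\times\mathbb R^{n_2}$, hence pointwise everywhere, for every $(j,k)$. No subsequence is needed here; the iterated convolution is the same as the convolution with the tensor kernel by Fubini, since $f\in L^1$.

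Fatou's lemma for the counting measure on the indices $(j,k)$ then gives, at every point,
\[
    S^{\mathrm{loc}}_{r_1,r_2}f(x)^2
    =\sum_{j,k}\lim_m|\cdots|^2
    \leq\liminf_m S^{\mathrm{loc}}_{r_1,r_2}f_m(x)^2,
\]
so that $S^{\mathrm{loc}}_{r_1,r_2}f\leq\liminf_m S^{\mathrm{loc}}_{r_1,r_2}f_m$ pointwise. Fatou's lemma in $x$ yields
\[
    \|S^{\mathrm{loc}}_{r_1,r_2}f\|_1\leq\liminf_m\|S^{\mathrm{loc}}_{r_1,r_2}f_m\|_1 ,
\]
which is the claim. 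In particular, $f\in h^1_{r_1,r_2}$ whenever the right-hand side is finite, because $f\in L^1$ by assumption. The one-parameter case is identical, using the single index $j$ and the kernels $\psi^{(i)}_{j,r}$.

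The only point needing care is the pointwise convergence of each convolution. This is handled by the $L^1\to L^\infty$ bound for bounded kernels, with constant $\|\psi^{(1)}_{j,r_1}\|_\infty\|\psi^{(2)}_{k,r_2}\|_\infty$. That constant may depend on $j,k,r_1,r_2$, which is harmless because the estimate is used only for fixed indices. No uniformity in the scales is required.
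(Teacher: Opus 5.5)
Your proposal is correct and matches the paper's proof. You use the bound $\|\psi^{(1)}_{j,r_1}\|_\infty\|\psi^{(2)}_{k,r_2}\|_\infty\|f_m-f\|_1$ to get uniform convergence of each coefficient for fixed $(j,k)$, then apply Fatou's lemma first to the square sum and then to the spatial integral. The preliminary subsequence extractions are unnecessary, since Fatou's lemma already applies to the full sequence.
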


\begin{proof}
For fixed $j,k$, boundedness of the kernels gives
\[
    \left\|
        \bigl(
            \psi^{(1)}_{j,r_1}\otimes
            \psi^{(2)}_{k,r_2}
        \bigr)*(f_m-f)
    \right\|_\infty
    \leq
    \|\psi^{(1)}_{j,r_1}\|_\infty
    \|\psi^{(2)}_{k,r_2}\|_\infty
    \|f_m-f\|_1
    \longrightarrow0.
\]
Apply Fatou's lemma first to the square sum
in \eqref{eq:local-product-square} and then
to the spatial integral.
The one-parameter assertion follows in the
same way.
\end{proof}

The space $h^1_{r_1,r_2}$ is complete.
Indeed, a Cauchy sequence converges in $L^1$
by \eqref{eq:product-local-L1-embedding}.
Applying Lemma~\ref{lem:local-Hardy-Fatou}
to its differences then gives convergence
in the local Hardy norm.

\subsection{Changes of scale and cutoff functions}
\label{subsec:local-scales-cutoffs}

We compare the local Hardy norms at different
scales and estimate multiplication by smooth
cutoff functions.
The one-parameter estimates will be transferred
to the product setting by
Lemma~\ref{lem:sequence-extension}.

\begin{lemma}[Changes of scale and cutoff functions]
\label{lem:local-scales-cutoffs}
The following assertions hold.

\begin{enumerate}
\item
If $0<a_i\leq r_i$ for $i=1,2$, then
\begin{equation}\label{eq:local-scale-comparison}
    \|f\|_{h^1_{a_1,a_2}}
    \leq C\|f\|_{h^1_{r_1,r_2}},
\end{equation}
and
\begin{equation}\label{eq:local-scale-comparison-reverse}
    \|f\|_{h^1_{r_1,r_2}}
    \leq
    C\prod_{i=1}^2
        \left(1+\log\frac{r_i}{a_i}\right)
    \|f\|_{h^1_{a_1,a_2}}.
\end{equation}

\item
If $\chi_i\in C^\infty_c(\mathbb R^{n_i})$,
$i=1,2$, then
\begin{equation}\label{eq:local-product-cutoff}
    \|\chi_1(x_1)\chi_2(x_2)f\|_{h^1_{r_1,r_2}}
    \leq
    C\prod_{i=1}^2
        \left(
            \|\chi_i\|_\infty
            +r_i\|\nabla\chi_i\|_\infty
        \right)
    \|f\|_{h^1_{r_1,r_2}}.
\end{equation}
\end{enumerate}

Each estimate holds for every function in
the space on its right-hand side.
The constant depends only on the dimensions
and the fixed kernels.
\end{lemma}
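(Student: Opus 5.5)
The plan is to prove each assertion first for one-parameter local Hardy norms on $\mathbb R^{n_i}$. The product estimates then follow by applying the one-parameter bounds successively in $x_1$ and $x_2$, with Lemma~\ref{lem:sequence-extension} moving each scalar estimate to the $\ell^2$-valued setting. The pattern is the same as in the proof of Lemma~\ref{lem:local-product-L1-embedding}. Fix $x_1$ and take the family $q_j=\psi^{(1)}_{j,r_1}*_1 f(x_1,\cdot)$. A one-parameter estimate in $x_2$ of the form $\|\mathcal B_{2,a_2}g\|_{L^1(\ell^2)}\le C\|\mathcal B_{2,r_2}g\|_{L^1(\ell^2)}$, once extended to sequences, replaces the second scale $r_2$ by $a_2$. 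Convolutions in different variables commute, so the same argument with the roles of the variables interchanged replaces $r_1$ by $a_1$. The constants multiply, which gives the product form of \eqref{eq:local-scale-comparison}, \eqref{eq:local-scale-comparison-reverse}, and \eqref{eq:local-product-cutoff}. The finiteness and measurability checks for almost every fixed variable are the same as in Lemma~\ref{lem:local-product-L1-embedding}.

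\emph{Scale comparison in one parameter.} We use Lemma~\ref{lem:local-heat-characterization}. For $a\le r$,
\[
\Bigl\|\sup_{0<t\le a^2}|H_tg|\Bigr\|_1\le \Bigl\|\sup_{0<t\le r^2}|H_tg|\Bigr\|_1,
\]
and this gives \eqref{eq:local-scale-comparison} at once. For the reverse direction we may assume, after scaling, that $a=1$ and $r=2^N$ with $N\approx\log_2(r/a)$. We split $\mathcal B_{i,r}g$ into the components $j\ge N$ and $j<N$.

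For $j\ge N$ we have $\psi_{j,r}=\psi_{j-N,1}$ whenever $j-N\ge1$, so these components are already components of $\mathcal B_{i,1}g$. The remaining pieces $\psi_{j,r}*g$ with $0\le j\le N$ are each the difference of two dilates of $\psi_0$ at scales between $1$ and $r$, plus $\psi_{0,r}*g$. Each such piece $\varphi_s*g$ with $s\ge1$ satisfies $\|\varphi_s*g\|_1\le C\|g\|_{h^1_1}$; here we use $\|\varphi_s*g\|_1\le\|\varphi_s\|_1\|g\|_1$ and \eqref{eq:local-L1-embedding}. Bounding the $\ell^2$ sum over these $N+1$ terms by the $\ell^1$ sum gives the factor $1+\log(r/a)$. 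To extend this to sequences we need the scalar bound $\|\mathcal B_{i,r}g\|\le C(1+\log(r/a))\|\mathcal B_{i,a}g\|$ for all $g$ with finite right-hand side. Lemma~\ref{lem:sequence-extension} then applies with $C_0=C(1+\log(r/a))$.

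\emph{Cutoffs in one parameter.} We first dilate to $r=1$, which turns $\chi$ into $\chi(r\,\cdot)$ with gradient bound $r\|\nabla\chi\|_\infty$. It then suffices to show
\[
\|\chi g\|_{h^1_1}\le C\bigl(\|\chi\|_\infty+\|\nabla\chi\|_\infty\bigr)\|g\|_{h^1_1}.
\]
This is the classical fact that $h^1$ is stable under multiplication by bounded Lipschitz functions. We would prove it through the maximal characterization in Lemma~\ref{lem:local-heat-characterization}, writing
\[
H_t(\chi g)(x)=\chi(x)H_tg(x)+\int h_t(x-y)\bigl(\chi(y)-\chi(x)\bigr)g(y)\,dy .
\]
The first term is bounded by $\|\chi\|_\infty\sup_{t\le1}|H_tg|$. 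In the second, $|\chi(y)-\chi(x)|\le\|\nabla\chi\|_\infty|x-y|$, so for $t\le1$ this term is dominated by $\|\nabla\chi\|_\infty\int \sqrt t\,|x-y|t^{-1/2}h_t(x-y)|g(y)|\,dy$. Its supremum over $t\le1$ has $L^1$ norm at most $C\|\nabla\chi\|_\infty\|g\|_1$, because $\sup_{t\le1}|z|h_t(z)$ is integrable (it behaves like $|z|^{1-n}$ near $0$ and decays like a Gaussian at infinity). Combined with \eqref{eq:local-L1-embedding}, this gives the one-parameter bound.

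\emph{Main obstacle.} The hard step is the product cutoff estimate. The operator $g\mapsto\chi_2g$ does not commute with $\psi^{(2)}_{k,r_2}*_2$, so the sequence extension has to be set up carefully. We would apply it to $\chi_1$ in the $x_1$ variable with the $x_2$-square function kept as a Hilbert-valued coordinate. Concretely, take $A=\mathcal B_{1,r_1}$ and $B=\mathcal B_{1,r_1}\circ(\chi_1\cdot)$ acting on the scalar functions $x_1\mapsto\psi^{(2)}_{k,r_2}*_2f(\cdot,x_2)$ for fixed $x_2$ and $k\ge0$. Multiplication by $\chi_1$ commutes with $\psi^{(2)}_{k,r_2}*_2$, so this yields $\|\chi_1f\|_{h^1_{r_1,r_2}}\le C(\ldots)\|f\|_{h^1_{r_1,r_2}}$. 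We then repeat the argument with $\chi_2$, with the roles of the variables swapped.
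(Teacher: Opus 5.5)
Your architecture matches the paper. You prove one-parameter bounds, lift them to $\ell^2$-valued inputs by Lemma~\ref{lem:sequence-extension}, and apply them successively in each variable. For the cutoff you let $\chi_1$ act in $x_1$ while the $x_2$ Littlewood--Paley index is carried as a Hilbert-space coordinate. Your forward comparison via monotonicity of $\sup_{0<t\le r^2}|H_tg|$ is also the paper's argument. The differences lie in two one-parameter ingredients and in the passage to general $f$:
\begin{itemize}
\item \emph{Reverse comparison.} The paper stays with the heat maximal function. It covers $[a^2,r^2]$ by $O(1+\log(r/a))$ dyadic windows, each costing $C\|g\|_1$. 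You instead shift indices via $\psi_{j,r}=\psi_{j-N,a}$ for $j>N$ and pay $\|\psi_{j,r}\|_1\|g\|_1$ for the $N+1$ unmatched indices. This shows directly where the logarithm comes from.
\item \emph{Cutoff.} The paper bounds the commutator $[\mathcal B_r,\chi]$ using $\sum_j\int|z|\,|\psi_{j,r}(z)|\,dz\le Cr$. You bound $[H_t,\chi]$ using the integrability of $\sup_{0<t\le1}|z|h_t(z)$. Both are sound.
\item \emph{General $f$.} The paper proves the product bounds for Schwartz $f$ and extends them by the Ding--Lu--Zhu density theorem and Lemma~\ref{lem:local-Hardy-Fatou}. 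Your direct almost-everywhere verification, modelled on Lemma~\ref{lem:local-product-L1-embedding}, also works. The one-parameter bounds hold for all $L^1$ inputs with finite right-hand side, and the second pass uses the finiteness produced by the first. This avoids the density theorem altogether.
\end{itemize}

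One step needs repair. The identity $\psi_{j,r}=\psi_{j-N,a}$ requires $r/a=2^N$ exactly. Scaling lets you normalize $a=1$, but it cannot make $r$ dyadic. The lemma is needed for arbitrary ratios, for instance $\ell(P)/\ell(Q)\in[C_F^{-1},C_F]$ in Lemma~\ref{lem:neighbouring-cutoffs} and $1/\eta$ in Corollary~\ref{cor:atoms-inside-enlargements}.

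Add the following step. If $r'=2^Na\le r<2r'$, then
\[
\bigl\|\sup_{0<t\le r^2}|H_tg|\bigr\|_1\le\bigl\|\sup_{0<t\le r'^2}|H_tg|\bigr\|_1+C\|g\|_1,
\]
because only the window $[r'^2,4r'^2]$ is added. Lemma~\ref{lem:local-heat-characterization} and \eqref{eq:local-L1-embedding} then give $\|g\|_{h^1_r}\le C\|g\|_{h^1_{r'}}$, and your dyadic argument handles the ratio $r'/a$. At that point the paper's window argument already covers all ratios at once.
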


\begin{proof}
\medskip
\noindent
\textbf{Step 1: One-parameter changes of scale.}

We omit the index $i$ and write
\[
    \mathcal B_rg=(\psi_{j,r}*g)_{j\geq0},
    \qquad
    m_rg=\sup_{0<t\leq r^2}|H_tg|,
\]
where $\mathcal B_r$ denotes the operator
$\mathcal B_{i,r}$ from \eqref{eq:B}
in the variable under consideration.
For $0<a\leq r$, we have $m_ag\leq m_rg$.
Moreover,
\[
    \int_{\mathbb R^n}
        \sup_{b\leq t\leq2b}h_t(x-y)\,dx
    \leq C,
    \qquad b>0,
\]
uniformly in $y$.
Dividing $[a^2,r^2]$ into dyadic intervals gives
\[
    \|m_rg\|_1
    \leq
    \|m_ag\|_1
    +
    C\left(1+\log\frac{r}{a}\right)\|g\|_1.
\]
Using \eqref{eq:local-heat-characterization}
and the $L^1$ estimate
\eqref{eq:local-L1-embedding} at scale $a$,
$
    \|g\|_1
    \leq C\|\mathcal B_ag\|_{L^1(\ell^2)},
$,
we obtain
\begin{equation}\label{eq:one-parameter-scale}
\begin{split}
    \|\mathcal B_ag\|_{L^1(\ell^2)}
    &\leq C\|\mathcal B_rg\|_{L^1(\ell^2)},\\
    \|\mathcal B_rg\|_{L^1(\ell^2)}
    &\leq
    C\left(1+\log\frac{r}{a}\right)
    \|\mathcal B_ag\|_{L^1(\ell^2)}.
\end{split}
\end{equation}

\medskip
\noindent
\textbf{Step 2: The one-parameter cutoff estimate.}

Write
\[
    \mathcal B_r(\chi g)
    =
    \chi\,\mathcal B_rg+[\mathcal B_r,\chi]g,
\]
where $[\mathcal B_r,\chi]g:=\mathcal B_r(\chi g)-\chi\,\mathcal B_rg$. Its $j$th component is
\[
    C_jg(x)
    =
    \int_{\mathbb R^n}
        \psi_{j,r}(x-y)
        \bigl(\chi(y)-\chi(x)\bigr)g(y)\,dy.
\]
Since
\[
    \left(\sum_{j\geq0}|C_jg(x)|^2\right)^{1/2}
    \leq\sum_{j\geq0}|C_jg(x)|,
\]
we have
$
    \|[\mathcal B_r,\chi]g\|_{L^1(\ell^2)}
    \leq\sum_{j\geq0}\|C_jg\|_1
$.
Using
$|\chi(y)-\chi(x)|
\leq\|\nabla\chi\|_\infty|x-y|$
and Tonelli's theorem,
\begin{align*}
    \|C_jg\|_1
    &\leq
    \|\nabla\chi\|_\infty
    \int_{\mathbb R^n}|g(y)|
    \left(
        \int_{\mathbb R^n}
            |x-y|\,|\psi_{j,r}(x-y)|\,dx
    \right)dy=
    \|\nabla\chi\|_\infty\|g\|_1
    \int_{\mathbb R^n}|z|\,|\psi_{j,r}(z)|\,dz.
\end{align*}
The definition of the kernels gives
\[
    \sum_{j\geq0}
        \int_{\mathbb R^n}|z|\,|\psi_{j,r}(z)|\,dz
    \leq Cr.
\]
Indeed, a change of variables yields
$
    \int_{\mathbb R^n}|z|\,|\psi_{0,r}(z)|\,dz
    =
    r\int_{\mathbb R^n}|z|\,|\psi_0(z)|\,dz
$,
while, for $j\geq1$, the two terms defining
$\psi_{j,r}$ give
\[
    \int_{\mathbb R^n}|z|\,|\psi_{j,r}(z)|\,dz
    \leq
    r\bigl(2^{-j}+2^{-(j-1)}\bigr)
    \int_{\mathbb R^n}|z|\,|\psi_0(z)|\,dz.
\]
Summing the geometric series proves the estimate.
Therefore,
\[
    \|[\mathcal B_r,\chi]g\|_{L^1(\ell^2)}
    \leq Cr\|\nabla\chi\|_\infty\|g\|_1.
\]
Together with \eqref{eq:local-L1-embedding},
this proves
\begin{equation}\label{eq:one-parameter-cutoff}
    \|\mathcal B_r(\chi g)\|_{L^1(\ell^2)}
    \leq
    C\bigl(\|\chi\|_\infty+r\|\nabla\chi\|_\infty\bigr)
    \|\mathcal B_rg\|_{L^1(\ell^2)}.
\end{equation}

\medskip
\noindent
\textbf{Step 3: Changes of scale in two variables.}

Lemma~\ref{lem:sequence-extension}, applied to
\eqref{eq:one-parameter-scale}, gives
\begin{equation}\label{eq:sequence_Version}
    \left\|
        \left(
            \sum_\nu
                \|\mathcal B_ag_\nu\|_{\ell^2}^2
        \right)^{1/2}
    \right\|_1
    \leq
    C\left\|
        \left(
            \sum_\nu
                \|\mathcal B_rg_\nu\|_{\ell^2}^2
        \right)^{1/2}
    \right\|_1.
\end{equation}
The second inequality in
\eqref{eq:one-parameter-scale} and
\eqref{eq:one-parameter-cutoff} similarly give
\begin{equation}\label{eq:sequence-scale-cutoff}
\begin{split}
    \left\|
        \left(
            \sum_\nu
                \|\mathcal B_rg_\nu\|_{\ell^2}^2
        \right)^{1/2}
    \right\|_1
    &\leq
    C\left(1+\log\frac{r}{a}\right)
    \left\|
        \left(
            \sum_\nu
                \|\mathcal B_ag_\nu\|_{\ell^2}^2
        \right)^{1/2}
    \right\|_1,\\
    \left\|
        \left(
            \sum_\nu
                \|\mathcal B_r(\chi g_\nu)\|_{\ell^2}^2
        \right)^{1/2}
    \right\|_1
    &\leq
    C\bigl(\|\chi\|_\infty+r\|\nabla\chi\|_\infty\bigr)
    \left\|
        \left(
            \sum_\nu
                \|\mathcal B_rg_\nu\|_{\ell^2}^2
        \right)^{1/2}
    \right\|_1.
\end{split}
\end{equation}

We first work with $f\in\mathcal S$.
For fixed $x_2$, consider the functions of $x_1$
\[
    g_k(x_1)
    =
    \bigl(\psi^{(2)}_{k,a_2}*_2f\bigr)(x_1,x_2),
    \qquad k\geq0.
\]
For every $u>0$,
\begin{align*}
    \left(
        \sum_{k\geq0}
            \|\mathcal B_{1,u}g_k(x_1)\|_{\ell^2}^2
    \right)^{1/2}
    &=
    \left(
        \sum_{j,k\geq0}
        \left|
            \psi^{(1)}_{j,u}*_1
            \psi^{(2)}_{k,a_2}*_2f(x_1,x_2)
        \right|^2
    \right)^{1/2}=S^{\mathrm{loc}}_{u,a_2}f(x_1,x_2).
\end{align*}
Thus \eqref{eq:sequence_Version} in $x_1$ gives
\[
    \int_{\mathbb R^{n_1}}
        S^{\mathrm{loc}}_{a_1,a_2}f(x_1,x_2)\,dx_1
    \leq
    C\int_{\mathbb R^{n_1}}
        S^{\mathrm{loc}}_{r_1,a_2}f(x_1,x_2)\,dx_1.
\]
Integrating in $x_2$ yields
\begin{equation}\label{eq:change-first-local-scale}
    \|f\|_{h^1_{a_1,a_2}}
    \leq C\|f\|_{h^1_{r_1,a_2}}.
\end{equation}

Next, for fixed $x_1$, apply
\eqref{eq:sequence_Version} in $x_2$ to
$
    \left(
        \bigl(\psi^{(1)}_{j,r_1}*_1f\bigr)(x_1,\cdot)
    \right)_{j\geq0}
$.
Convolutions in different variables commute,
so integration in $x_1$ gives
$
    \|f\|_{h^1_{r_1,a_2}}
    \leq C\|f\|_{h^1_{r_1,r_2}}
$.
Together with \eqref{eq:change-first-local-scale},
this proves \eqref{eq:local-scale-comparison}.

For the reverse comparison, use the first
inequality in \eqref{eq:sequence-scale-cutoff}.
Apply it first in $x_1$ to
$
    \left(
        \bigl(\psi^{(2)}_{k,r_2}*_2f\bigr)(\cdot,x_2)
    \right)_{k\geq0}
$,
and then in $x_2$ to
$
    \left(
        \bigl(\psi^{(1)}_{j,a_1}*_1f\bigr)(x_1,\cdot)
    \right)_{j\geq0}
$.
After integration in the other variable,
\begin{align*}
    \|f\|_{h^1_{r_1,r_2}}
    &\leq
    C\left(1+\log\frac{r_1}{a_1}\right)
        \|f\|_{h^1_{a_1,r_2}}\leq
    C\left(1+\log\frac{r_1}{a_1}\right)
     \left(1+\log\frac{r_2}{a_2}\right)
        \|f\|_{h^1_{a_1,a_2}}.
\end{align*}
This proves \eqref{eq:local-scale-comparison-reverse}.

\medskip
\noindent
\textbf{Step 4: The product cutoff estimate.}

For fixed $x_2$, set
\[
    g_k(x_1)
    =
    \bigl(\psi^{(2)}_{k,r_2}*_2f\bigr)(x_1,x_2).
\]
Since $\chi_1$ depends only on $x_1$,
$
    \psi^{(2)}_{k,r_2}*_2(\chi_1f)
    =
    \chi_1\bigl(\psi^{(2)}_{k,r_2}*_2f\bigr)
$.
Consequently, the second inequality in
\eqref{eq:sequence-scale-cutoff} gives
\begin{align*}
    &\int_{\mathbb R^{n_1}}
        S^{\mathrm{loc}}_{r_1,r_2}(\chi_1f)(x_1,x_2)
        \,dx_1=
    \int_{\mathbb R^{n_1}}
        \left(
            \sum_{k\geq0}
                \|\mathcal B_{1,r_1}
                    (\chi_1g_k)(x_1)\|_{\ell^2}^2
        \right)^{1/2}dx_1\\
    &\quad\leq
    C\bigl(\|\chi_1\|_\infty
           +r_1\|\nabla\chi_1\|_\infty\bigr)
    \int_{\mathbb R^{n_1}}
        S^{\mathrm{loc}}_{r_1,r_2}f(x_1,x_2)\,dx_1.
\end{align*}
After integration in $x_2$,
\[
    \|\chi_1f\|_{h^1_{r_1,r_2}}
    \leq
    C\bigl(\|\chi_1\|_\infty
           +r_1\|\nabla\chi_1\|_\infty\bigr)
    \|f\|_{h^1_{r_1,r_2}}.
\]
Interchanging the variables gives the analogous
estimate for multiplication by $\chi_2$.
Applying the two estimates successively proves
\eqref{eq:local-product-cutoff} for Schwartz functions.

All applications of the sequence estimates
may first be made to finite families.
Monotone convergence then gives the full
square sums.

\medskip
\noindent
\textbf{Step 5: Passage to general functions.}

Choose Schwartz approximants converging to $f$
in the norm on the right-hand side of the
estimate under consideration.
By \eqref{eq:product-local-L1-embedding},
they also converge to $f$ in $L^1$.
For the cutoff estimate, multiplication by
$\chi_1\chi_2$ preserves this $L^1$ convergence.

Lemma~\ref{lem:local-Hardy-Fatou} bounds the
norm on the left-hand side by the lower limit
of the corresponding norms of the approximants.
The estimates already proved for Schwartz
functions therefore pass to the limit.
\end{proof}

In particular, for fixed $\eta_1,\eta_2\in(0,1]$,
the norms of $h^1_{r_1,r_2}$ and
$h^1_{\eta_1r_1,\eta_2r_2}$ are equivalent,
uniformly in $r_1,r_2$.

\subsection{Local product atoms and support control}
\label{subsec:localized-atomic-decompositions}

We use the fixed parameters $\tau>1$, $\rho>0$,
and the families $\mathcal M_{r_1,r_2}(\Omega)$
introduced in Subsection~\ref{subsec:main-results}.
The following local atoms provide the model
for Definition~\ref{def:covering-product-atom}.

\begin{definition}\label{def:local-product-atom}
Let $r_1,r_2>0$.
A function $a\in L^2$ is a local product atom
at scales $r_1,r_2$ if there is an open set
$\Omega$ with $0<|\Omega|<\infty$ such that
the following conditions hold.

\begin{enumerate}
\item
The function $a$ vanishes almost everywhere
outside $\Omega$, and
\[
    \|a\|_2\leq|\Omega|^{-1/2}.
\]

\item
There are functions $a_R\in L^2$, indexed by
$R\in\mathcal M_{r_1,r_2}(\Omega)$, satisfying
\[
    a=\sum_{R\in\mathcal M_{r_1,r_2}(\Omega)}a_R
    \quad\text{in }L^2,
    \qquad
    \sum_{R\in\mathcal M_{r_1,r_2}(\Omega)}
        \|a_R\|_2^2
    \leq|\Omega|^{-1}.
\]
Each $a_R$ vanishes almost everywhere outside
$\tau R$.

\item
For $R=I\times J$, we have
\[
    \int_{\mathbb R^{n_1}}a_R(x_1,x_2)\,dx_1=0
    \quad\text{for almost every }x_2
    \quad\text{if }\ell(I)<\rho r_1,
\]
and
\[
    \int_{\mathbb R^{n_2}}a_R(x_1,x_2)\,dx_2=0
    \quad\text{for almost every }x_1
    \quad\text{if }\ell(J)<\rho r_2.
\]
No cancellation in a variable is required
when the corresponding side length is  $\geq \rho r_i$.
\end{enumerate}
\end{definition}

The components $a_R$ need not be supported
in $\Omega$.
Every atom from
Definition~\ref{def:covering-product-atom}
is a local product atom at scales
$\eta\ell(Q_1),\eta\ell(Q_2)$, with the
additional support restrictions specified there.

We use the local atomic estimates of
\cite[Theorems~1.4 and~1.5]{DingLuZhu2019}.
The constants in these estimates may depend
on the fixed support and cancellation parameters.
The normalization of the squared $L^2$ bounds
is obtained by a fixed rescaling of atoms
and coefficients.

\begin{proposition}\label{prop:local-product-atoms}
For every local product atom $a$ at scales
$r_1,r_2$,
\begin{equation}\label{eq:local-product-atom-bound}
    \|a\|_{h^1_{r_1,r_2}}\leq C.
\end{equation}
Moreover, every $f\in h^1_{r_1,r_2}$ admits
a representation
\begin{equation}\label{eq:local-product-atomic-decomposition}
    f=\sum_{\nu=1}^{\infty}\lambda_\nu a_\nu,
    \qquad
    \sum_{\nu=1}^{\infty}|\lambda_\nu|
    \leq C\|f\|_{h^1_{r_1,r_2}},
\end{equation}
where the $a_\nu$ are local product atoms
at scales $r_1,r_2$.
The series converges in $h^1_{r_1,r_2}$ and
in $L^1$.
If $f\in L^2$, it can also be chosen to
converge in $L^2$.
\end{proposition}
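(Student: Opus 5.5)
The plan is to reduce everything to unit scales $r_1=r_2=1$ by the independent dilation \eqref{eq:local-product-dilation}, and then invoke the local atomic theory of \cite[Theorems~1.4 and~1.5]{DingLuZhu2019}. Set $\widetilde f(u_1,u_2)=r_1^{n_1}r_2^{n_2}f(r_1u_1,r_2u_2)$. The map $a\mapsto \widetilde a$ sends a local product atom at scales $r_1,r_2$ with open set $\Omega$ to $|\Omega|^{1/2}|\widetilde\Omega|^{-1/2}$ times a local atom at unit scales with open set $\widetilde\Omega=\{(x_1/r_1,x_2/r_2):(x_1,x_2)\in\Omega\}$. Indeed $|\widetilde\Omega|=r_1^{-n_1}r_2^{-n_2}|\Omega|$ and the $L^2$ norms scale by $(r_1^{n_1}r_2^{n_2})^{1/2}$, so the normalized atom is exactly $\widetilde a$. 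Moreover $r_1\mathcal D_1\times r_2\mathcal D_2$ maps onto $\mathcal D_1\times\mathcal D_2$, maximal rectangles map to maximal rectangles, $\tau R$ maps to $\tau\widetilde R$, and the thresholds $\ell(I)<\rho r_1$ become $\ell(\widetilde I)<\rho$. Cancellation conditions are preserved, and the $L^1$ norm is invariant. Hence both assertions, with constants independent of $r_1,r_2$, follow from their unit-scale versions.

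For \eqref{eq:local-product-atom-bound} at unit scale, I would quote \cite[Theorem~1.4]{DingLuZhu2019}, which bounds atoms of this type uniformly in $h^1_{1,1}$. If one wants to verify it directly, the standard Journé-covering argument applies. Split $S^{\mathrm{loc}}_{1,1}a$ over the enlargement $\widetilde\Omega=\{M_s\mathbf 1_\Omega>1/2\}$. On $\widetilde\Omega$, use Cauchy--Schwarz and $L^2$-boundedness of $S^{\mathrm{loc}}$ with $|\widetilde\Omega|\lesssim|\Omega|$. Off $\widetilde\Omega$, treat each $a_R$ using cancellation in the variables with $\ell<\rho$ (giving decay $2^{-j}/\ell(I)$-type gains via Taylor expansion of $\psi_j$), and use compact support of the kernels at scale $\le 1$ when $\ell(I)\ge\rho$ (then only $j$ with $2^{-j}\gtrsim$ dist$/\ell(I)$ contribute, and the kernel support is bounded, so no cancellation is needed). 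Summing via Journé's lemma is the main technical point, but it is exactly the content of the cited theorem, so I would not redo it. The only discrepancy to check is normalization: Ding--Lu--Zhu may state $\sum\|a_R\|_2^2\le|\Omega|^{-1}$ with different constants or a different $\tau$. Since $\tau,\rho$ are fixed, constants depend only on them, and a fixed rescaling of atoms and coefficients handles the constant.

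For the decomposition, \cite[Theorem~1.5]{DingLuZhu2019} with $p=1$ gives $f=\sum\lambda_\nu a_\nu$ with $\sum|\lambda_\nu|\lesssim\|f\|_{h^1_{1,1}}$. I expect the main obstacle to be matching the atom definitions: the cited atoms use maximal dyadic rectangles with fixed enlargements and cancellation below unit scale. If their threshold or enlargement differs from $\rho,\tau$, I would adjust as follows. A component without required cancellation at side $\ge\rho$ is fine. A component whose side lies between the two thresholds can be regrouped, or the construction (Calderón reproducing formula $f=\sum_{j,k}\widetilde\psi_j*\widetilde\psi_k*\psi_j*\psi_k*f$ over level sets $\Omega_\ell=\{S^{\mathrm{loc}}f>2^\ell\}$) can be rerun with compactly supported kernels of small enough support that components land in $\tau R$. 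Here the coarse terms $j=0$ or $k=0$ produce components without cancellation at side $\sim1\ge\rho$ after shrinking the grid, using the equivalence $h^1_{1,1}\simeq h^1_{\eta,\eta}$ from Lemma~\ref{lem:local-scales-cutoffs}.

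Convergence is handled as follows. Convergence in $h^1_{1,1}$ follows from the atom bound and summable coefficients together with completeness, and $L^1$ convergence then follows from Lemma~\ref{lem:local-product-L1-embedding}. The two limits agree by Lemma~\ref{lem:local-Hardy-Fatou}. For $f\in L^2$, the reproducing-formula construction converges in $L^2$ since the partial sums over level sets are controlled by $\|S^{\mathrm{loc}}f\|_2\lesssim\|f\|_2$ and orthogonality-type estimates.
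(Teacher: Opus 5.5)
Your proposal is essentially the paper's proof. You reduce to $r_1=r_2=1$ by the dilation \eqref{eq:local-product-dilation}, check that $\widetilde a$ is itself a unit-scale atom (no extra factor is needed, as your own norm computation shows), that $\mathcal M_{r_1,r_2}(\Omega)$ maps onto $\mathcal M_{1,1}(\widetilde\Omega)$, and that supports and cancellation thresholds transform correctly, and then quote \cite[Theorems~1.4 and~1.5]{DingLuZhu2019}, with normalizations absorbed by a fixed rescaling and $L^1$ convergence obtained from \eqref{eq:product-local-L1-embedding}; your fallback of rerunning the reproducing-formula construction to match $\rho$ and $\tau$ is what the paper actually carries out later in Lemma~\ref{lem:localized-atomic-decomposition}, using rectangles of side $b2^{-j}$, $b2^{-k}$ with $b\ge\rho$ and $1+2K/b<\tau$.
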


\begin{proof}
We explain the passage from unit scales to
$r_1,r_2>0$.
Use the dilation from
\eqref{eq:local-product-dilation}.
If $a$ is an atom associated with $\Omega$,
put
\[
    \widetilde\Omega
    =
    \{(u_1,u_2):(r_1u_1,r_2u_2)\in\Omega\}.
\]
Then
\[
    |\widetilde\Omega|
    =
    r_1^{-n_1}r_2^{-n_2}|\Omega|,
    \qquad
    \|\widetilde a\|_2
    =
    r_1^{n_1/2}r_2^{n_2/2}\|a\|_2
    \leq|\widetilde\Omega|^{-1/2}.
\]
The map
\[
    I\times J
    \longmapsto
    (r_1^{-1}I)\times(r_2^{-1}J)
\]
takes $\mathcal M_{r_1,r_2}(\Omega)$ bijectively
onto $\mathcal M_{1,1}(\widetilde\Omega)$.
The corresponding components are
\[
    \widetilde a_{\widetilde R}(u_1,u_2)
    =
    r_1^{n_1}r_2^{n_2}
        a_R(r_1u_1,r_2u_2).
\]
They vanish outside $\tau\widetilde R$ and satisfy
\[
    \sum_{\widetilde R}
        \|\widetilde a_{\widetilde R}\|_2^2
    =
    r_1^{n_1}r_2^{n_2}\sum_R\|a_R\|_2^2
    \leq|\widetilde\Omega|^{-1}.
\]
The rectangular series remains $L^2$-convergent.
Moreover,
$
    \ell(I)<\rho r_1
    \quad\Longleftrightarrow\quad
    \ell(r_1^{-1}I)<\rho
$,
and the analogous identity holds in the second
variable.
Changing variables in the partial integrals
therefore preserves the cancellation conditions.
Thus $\widetilde a$ is a unit-scale atom.

The unit-scale atomic bound and
\eqref{eq:local-product-dilation} give
$
    \|a\|_{h^1_{r_1,r_2}}
    =
    \|\widetilde a\|_{h^1_{1,1}}
    \leq C
$.
Similarly, apply the unit-scale atomic
decomposition to $\widetilde f$ and then
use the dilation.
The coefficients are unchanged, and the
atoms become atoms at scales $r_1,r_2$.
This proves
\eqref{eq:local-product-atomic-decomposition}.

The uniform atom bound and summability of the
coefficients give convergence in the Hardy norm.
Equation~\eqref{eq:product-local-L1-embedding}
then gives convergence in $L^1$.
The $L^2$ convergence supplied by the unit-scale
construction is also preserved by dilation.
\end{proof}

For the application to the covering cubes,
we need a decomposition in which the associated
open sets and every rectangular component remain
in a fixed enlargement of the original support.

\begin{lemma}\label{lem:localized-atomic-decomposition}
There exist constants $A\geq1$ and $C>0$
with the following property.

Let $P_i\subset\mathbb R^{n_i}$ be cubes with
$\ell(P_i)=r_i$, $i=1,2$, and let
$g\in h^1_{r_1,r_2}$ vanish almost everywhere
outside $P_1\times P_2$.
Then
\[
    g=\sum_{\nu=1}^{\infty}\lambda_\nu a_\nu
\]
in $h^1_{r_1,r_2}$ and in $L^1$, where each
$a_\nu$ is a local product atom at scales
$r_1,r_2$, and
\[
    \sum_{\nu=1}^{\infty}|\lambda_\nu|
    \leq C\|g\|_{h^1_{r_1,r_2}}.
\]
Their associated open sets satisfy
\[
    \Omega_\nu\subset AP_1\times AP_2,
\]
and every rectangular component $(a_\nu)_R$
vanishes almost everywhere outside
$AP_1\times AP_2$.

If $g\in L^2$, the series can also be chosen
to converge in $L^2$.
The constants are independent of $r_1,r_2$,
the positions of the cubes, and $g$.
\end{lemma}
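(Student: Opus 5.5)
The strategy is to take the unconstrained decomposition of Proposition~\ref{prop:local-product-atoms} and localize it with a cutoff adapted to $P_1\times P_2$, then rebuild each localized piece as an atom whose open set sits in a fixed enlargement of $P_1\times P_2$. By the dilation \eqref{eq:local-product-dilation}, which maps cubes of side $r_i$ to unit cubes and local atoms to local atoms (as in the proof of Proposition~\ref{prop:local-product-atoms}), we may take $r_1=r_2=1$. Translating, we may also assume $P_i$ are unit cubes centred at the origin. Fix $\chi_i\in C^\infty_c(\mathbb R^{n_i})$ with $\chi_i=1$ on $2P_i$ and $\operatorname{supp}\chi_i\subset 3P_i$. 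Since $g=\chi_1\chi_2 g$, applying Proposition~\ref{prop:local-product-atoms} to $g$ gives
\[
g=\sum_\nu\lambda_\nu a_\nu,\qquad\text{hence}\qquad g=\sum_\nu\lambda_\nu\,\chi_1\chi_2a_\nu,
\]
with convergence in $L^1$ (and in $L^2$ if $g\in L^2$). Multiplication by $\chi_1\chi_2$ is bounded on $h^1_{1,1}$ by \eqref{eq:local-product-cutoff}, so the series also converges in $h^1_{1,1}$.

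The task is then to show that each $b=\chi_1\chi_2a$, with $a$ a unit-scale local atom on $\Omega$, is a finite combination $\sum_\mu c_\mu b_\mu$ of localized atoms with $\sum_\mu|c_\mu|\le C$. I would split $\Omega$ according to size. For each $R=I\times J\in\mathcal M_{1,1}(\Omega)$ with $\tau R\cap(3P_1\times3P_2)\neq\varnothing$, call $R$ \emph{small} if $\ell(I),\ell(J)\le 1$. In that case $\tau R\subset(3+2\tau)P_1\times(3+2\tau)P_2$. The remaining \emph{large} rectangles have some side length greater than $1\ge\rho$ (after replacing $\rho$ by $\min(\rho,1)$ via a fixed rescaling, or handling $\rho>1$ analogously with threshold $\max(\rho,1)$). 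For large rectangles, cancellation in the long variable is not required, although cancellation in the short variable may be.

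For small $R$, the components $\chi_1\chi_2a_R$ retain cancellation in a variable only if the cutoff in that variable is trivial on $\tau R$. This fails in general, so I would write
\[
\chi_1\chi_2a_R=\chi_1(c_I)\chi_2(c_J)a_R+E_R,
\]
with $c_I,c_J$ the centres of $I,J$. The error $E_R$ gains a factor $\ell(I)+\ell(J)$ from the Lipschitz bound on $\chi_i$. It is this gain that lets the error be absorbed as a sum of non-cancellative atoms at scale $\lesssim1$, via the same geometric-series argument as in Step~2 of Lemma~\ref{lem:local-scales-cutoffs}. The set $\Omega'=\Omega\cap(A P_1\times AP_2)$ with $A=3+2\tau$ serves as the new open set. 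Its maximal rectangles differ from those of $\Omega$, and regrouping the components along $\mathcal M_{1,1}(\Omega')$ is the \emph{main obstacle}. To handle it I would instead enlarge: take $\widetilde\Omega=\bigcup\{\tau' R\}$ over the retained $R$, and estimate $|\widetilde\Omega|\lesssim|\Omega|$ using Journé's covering lemma. Each $R$ is then contained in a maximal rectangle of $\widetilde\Omega$, to which its component is assigned, and support enlargement factors are adjusted by a fixed constant.

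A simpler fallback avoids the regrouping altogether. Since a cancellation-free local atom supported in a box of unit size has $h^1_{1,1}$ norm $\lesssim1$ (bounded by $\|\cdot\|_1$ times $\log$ factors from \eqref{eq:local-scale-comparison-reverse} at scale $\rho$), I would prove directly that any $b\in h^1_{1,1}$ supported in $3P_1\times3P_2$ with $\|b\|_{h^1}\lesssim1$ decomposes locally. This goes by running the Chang--Fefferman construction of \cite{DingLuZhu2019}, whose open sets are level sets $\{M_s(\text{square function})>2^k\}$ intersected with the enlarged support. Because the kernels $\psi_0^{(i)}$ have compact support of size $\lesssim1$ at unit scale, every Littlewood--Paley piece $\psi^{(1)}_{j,1}*_1\psi^{(2)}_{k,1}*_2b$ is supported in $A P_1\times AP_2$. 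The Calderón reproducing formula therefore assembles the atoms from tent-space pieces restricted to rectangles meeting that set. Intersecting $\Omega_k$ with $AP_1\times AP_2$ keeps them open, decreases their measure, and keeps every component inside. This yields the constants $A,C$, independent of position and scale, and $L^2$ convergence follows exactly as in Proposition~\ref{prop:local-product-atoms}.
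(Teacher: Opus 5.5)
\textbf{Primary route.} Your primary route (multiply the atoms of Proposition~\ref{prop:local-product-atoms} by $\chi_1\chi_2$ and repair them) does not close, and the trouble goes beyond the regrouping you flag.

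The splitting $\chi_1\chi_2a_R=\chi_1(c_I)\chi_2(c_J)a_R+E_R$ puts all loss of cancellation into one term whose only gain is $\ell(I)+\ell(J)$. Take $\ell(I)\ll\rho$ and $\ell(J)\approx1$. Then $E_R$ has no $x_1$-cancellation, so it cannot be a component indexed by a rectangle with $x_1$-side below $\rho$. Moving it to rectangles of $x_1$-side $\geq\rho$ inflates the open set by a factor up to $(\rho/\ell(I))^{n_1}$, while $\|E_R\|_2$ is only $O(\|a_R\|_2)$. Step~2 of Lemma~\ref{lem:local-scales-cutoffs} does not help here: it is an $L^1$ commutator bound and says nothing about re-expanding pieces into atoms with the collective normalization $\sum_R\|a_R\|_2^2\leq|\Omega|^{-1}$.

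A workable version would have to isolate the cancellation defect in each variable separately. For example, $\int E_R\,dx_1=\chi_2(x_2)\int(\chi_1-\chi_1(c_I))a_R\,dx_1$ is $O(\ell(I))$. You would then telescope only that defect up to scale $\rho$ in $x_1$, control the enlarged open sets by the strong maximal function, and reassign pieces to maximal grid rectangles. None of this is in the proposal.

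Three further problems with this route:
\begin{enumerate}
\item The bound $|\bigcup_{R\subset\Omega}\tau'R|\lesssim|\Omega|$ comes from the $L^2$ bound for $M_{\mathrm{str}}$, not from Journé's lemma.
\item For the large retained $R$, this union is not contained in any fixed $AP_1\times AP_2$, so the containment the lemma requires fails.
\item Replacing $\rho$ by $\min(\rho,1)$ goes the wrong way. A smaller threshold imposes fewer cancellation conditions, so for $\rho>1$ the output would not be atoms in the sense of Definition~\ref{def:local-product-atom}.
\end{enumerate}

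\textbf{Fallback route.} Your fallback is in substance the paper's proof. You reduce to unit scale and use the compactly supported reproducing kernels of \cite[Theorem~1.1]{DingLuZhu2019}. Every coefficient rectangle with nonzero coefficient has bounded sides and meets the support of $\varphi_{j,k}*g$, so it lies in a fixed $cP_1\times cP_2$. You then run the level-set construction.

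Intersecting the open sets with the interior of $AP_1\times AP_2$ is a legitimate substitute for the paper's Step~7. The paper instead deduces $\Omega_m\subset A_0P_1\times A_0P_2$ from $E_m\subset cP_1\times cP_2$ and $M_{\mathrm{str}}\mathbf 1_{E_m}\leq M_1\mathbf 1_{cP_1}\,M_2\mathbf 1_{cP_2}$. For your version, you only need to check that each contributing $Q$ and its halo $\tau Q$ lie in $AP_1\times AP_2$. You should also note that shrinking the open set only weakens the normalization $\|a\|_2\leq|\Omega|^{-1/2}$.

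What the fallback leaves out is that the construction of \cite{DingLuZhu2019}, run verbatim, is not set up to give atoms for arbitrary prescribed $\tau>1$ and $\rho>0$. The paper uses blocks of side $b2^{-j}\times b2^{-k}$ with dyadic $b$ satisfying \eqref{eq:localized-choice-b}: $b\geq\rho$ and $1+2K/b<\tau$, where $[-K,K]^{n_i}$ contains the kernel supports.
\begin{itemize}
\item The condition $b\geq\rho$ forces $j\geq1$, hence $\int\eta^{(1)}_j=0$, for every piece assigned to a rectangle with $\ell(I)<\rho$.
\item The condition $1+2K/b<\tau$ gives $\operatorname{supp}u_Q\subset\tau Q$.
\end{itemize}
This choice then requires the discrete square-function bound at block size $b$, which the paper proves via the Fefferman--Stein inequality for $M_{\mathrm{str}}$.

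You also need to carry out the construction for $g\in L^2$ first. That assumption is then removed by density of Schwartz functions plus the cutoff estimate \eqref{eq:local-product-cutoff}, which is why the paper allows support in $2P_1\times2P_2$.

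Once the fallback is in place, the detour through Proposition~\ref{prop:local-product-atoms} and the pieces $\chi_1\chi_2a_\nu$ is unnecessary: apply the construction to $g$ directly.
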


\begin{proof}
We essentially follow the proof of
\cite[Theorem~1.5]{DingLuZhu2019}, keeping additional
control of the scales and supports throughout
the construction.

\medskip
\noindent
\textbf{Step 1: Unit scales and the reproducing formula.} By \eqref{eq:local-product-dilation}, it suffices
to consider $r_1=r_2=1$.
We first assume $g\in L^2\cap h^1_{1,1}$ and
prove the slightly stronger assertion allowing
\[
    \operatorname{supp}g\subset2P_1\times2P_2,
    \qquad \ell(P_1)=\ell(P_2)=1.
\]
This larger support will be useful in Step~10.

For $i=1,2$, choose
$\varphi^{(i)}_0\in C^\infty_c(\mathbb R^{n_i})$
with $\int\varphi^{(i)}_0=1$, and set
\[
    \varphi^{(i)}(x_i)
    =
    \varphi^{(i)}_0(x_i)
    -
    2^{-n_i}\varphi^{(i)}_0(x_i/2).
\]
Then $\int\varphi^{(i)}=0$.
By \cite[Theorem~1.1]{DingLuZhu2019}, there are
$\eta^{(i)}_0,\eta^{(i)}\in C^\infty_c(\mathbb R^{n_i})$,
with $\int\eta^{(i)}=0$, such that, writing
\[
    \varphi^{(i)}_j(x_i)
    =
    2^{jn_i}\varphi^{(i)}(2^jx_i),
    \qquad
    \eta^{(i)}_j(x_i)
    =
    2^{jn_i}\eta^{(i)}(2^jx_i),
    \qquad j\geq1,
\]
we have
\[
    u=\sum_{j\geq0}
        \eta^{(i)}_j*(\varphi^{(i)}_j*u)
    \quad\text{in }L^2.
\]
Set $\varphi_{j,k}
    =
    \varphi^{(1)}_j\otimes\varphi^{(2)}_k$ and $\eta_{j,k}
    =
    \eta^{(1)}_j\otimes\eta^{(2)}_k$. Applying the formula in both variables gives
\begin{equation}\label{eq:localized-reproducing-formula}
    g=\sum_{j,k\geq0}
        \eta_{j,k}*(\varphi_{j,k}*g)
    \quad\text{in }L^2.
\end{equation}
There is a fixed $K>0$ such that
\[
    \operatorname{supp}\varphi^{(i)}_j
    \cup
    \operatorname{supp}\eta^{(i)}_j
    \subset[-K2^{-j},K2^{-j}]^{n_i},
    \qquad j\geq0,\quad i=1,2.
\]

\medskip
\noindent
\textbf{Step 2: Choice of scales and rectangular pieces.} Recall that $\rho>0$ is the fixed constant
in Definition~\ref{def:local-product-atom}
determining the cancellation thresholds:
at scales $r_1,r_2$, cancellation is required
when $\ell(I)<\rho r_1$ or $\ell(J)<\rho r_2$,
respectively.
Fix a dyadic number $b\geq1$ such that
\begin{equation}\label{eq:localized-choice-b}
    b\geq\rho,
    \qquad
    1+\frac{2K}{b}<\tau.
\end{equation}
For $j,k\geq0$, let $\mathcal Q_{j,k}$ be the
partition of $\mathbb R^{n_1}\times\mathbb R^{n_2}$
into dyadic rectangles $Q=I\times J$, where
$I\in\mathcal D_1$ and $J\in\mathcal D_2$ have
side lengths
\[
    \ell(I)=b2^{-j},
    \qquad
    \ell(J)=b2^{-k}.
\]
For $Q\in\mathcal Q_{j,k}$, define
\begin{equation}\label{eq:u_Q}
\begin{split}
    c_Q
    &=
    \operatorname*{ess\,sup}_{y\in Q}
        |(\varphi_{j,k}*g)(y)|,\ \ \
    u_Q(x)
    =
    \int_Q
        \eta_{j,k}(x-y)(\varphi_{j,k}*g)(y)\,dy.
\end{split}
\end{equation}
Then
\begin{equation}\label{eq:localized-piece-support}
    \operatorname{supp}u_Q
    \subset
    \left(1+\frac{2K}{b}\right)Q
    \subset\tau Q.
\end{equation}
Define
\[
    S_bg(x)
    =
    \big(
        \sum_{j,k\geq0}
        \sum_{Q\in\mathcal Q_{j,k}}
            c_Q^2\mathbf1_Q(x)
    \big)^{1/2}.
\]

\medskip
\noindent
\textbf{Step 3: The discrete square-function estimate.} We claim that
\begin{equation}\label{eq:localized-discrete-square}
    \|S_bg\|_1\leq C_b\|g\|_{h^1_{1,1}}.
\end{equation}
For $b=1$, this is
\cite[Corollary~3.7]{DingLuZhu2019}.
For the chosen $b$, set
\[
    F_{j,k}(x)
    =
    \sum_{\substack{I,J\ \mathrm{dyadic}\\
                    \ell(I)=2^{-j},\ \ell(J)=2^{-k}}}
    \left(
        \operatorname*{ess\,sup}_{y\in I\times J}
            |(\varphi_{j,k}*g)(y)|
    \right)\mathbf1_{I\times J}(x).
\]
Each $Q\in\mathcal Q_{j,k}$ is the union of
$b^{n_1+n_2}$ rectangles from this finer partition, so, for almost every $x\in Q$,
\[
    c_Q^{1/2}
    \leq
    b^{n_1+n_2}
    \frac1{|Q|}\int_QF_{j,k}(y)^{1/2}\,dy
    \leq
    b^{n_1+n_2}
    M_{\mathrm{str}}(F_{j,k}^{1/2})(x).
\]
Here
\[
    M_{\mathrm{str}}u(x_1,x_2)
    =
    \sup_{\substack{I\ni x_1\\J\ni x_2}}
    \frac1{|I|\,|J|}
        \int_I\int_J|u(y_1,y_2)|\,dy_2\,dy_1,
\]
where $I,J$ range over axis-parallel cubes
with independently chosen side lengths.

The Fefferman--Stein inequality
\cite{FeffermanStein}, with $p=2$ and $q=4$,
gives
\begin{align*}
    \|S_bg\|_1
    &\leq
    b^{2(n_1+n_2)}
    \left\|
        \left(
            \sum_{j,k\geq0}
                |M_{\mathrm{str}}(F_{j,k}^{1/2})|^4
        \right)^{1/4}
    \right\|_2^2\\
    &\leq
    C_b\left\|
        \left(
            \sum_{j,k\geq0}F_{j,k}^2
        \right)^{1/4}
    \right\|_2^2=
    C_b\left\|
        \left(
            \sum_{j,k\geq0}F_{j,k}^2
        \right)^{1/2}
    \right\|_1
    \leq C_b\|g\|_{h^1_{1,1}}.
\end{align*}
To justify its use for $M_{\mathrm{str}}$,
let $M_i$ be the Hardy--Littlewood maximal
operator in $x_i$.
Since $M_{\mathrm{str}}u\leq M_1(M_2u)$,
two one-parameter applications give
\begin{align*}
    \left\|
        \left(
            \sum_{j,k}|M_{\mathrm{str}}u_{j,k}|^4
        \right)^{1/4}
    \right\|_2
    &\leq
    \left\|
        \left(
            \sum_{j,k}|M_1(M_2u_{j,k})|^4
        \right)^{1/4}
    \right\|_2\\
    &\leq
    C\left\|
        \left(
            \sum_{j,k}|M_2u_{j,k}|^4
        \right)^{1/4}
    \right\|_2\leq
    C\left\|
        \left(
            \sum_{j,k}|u_{j,k}|^4
        \right)^{1/4}
    \right\|_2.
\end{align*}
We apply this with $u_{j,k}=F_{j,k}^{1/2}$.
Infinite families follow by monotone convergence.

\medskip
\noindent
\textbf{Step 4: An $L^2$ estimate for sums of pieces.} For every finite collection $\mathcal E$
of coefficient rectangles,
\begin{equation}\label{eq:localized-synthesis-estimate}
    \left\|\sum_{Q\in\mathcal E}u_Q\right\|_2^2
    \leq
    C\sum_{j,k\geq0}
    \sum_{Q\in\mathcal E\cap\mathcal Q_{j,k}}
        \int_Q|\varphi_{j,k}*g|^2.
\end{equation}
Indeed, write
$\eta_{j,k}^*(x)=\overline{\eta_{j,k}(-x)}$.
For $h\in L^2$, Cauchy--Schwarz gives
\begin{align*}
    \left|
        \left\langle\sum_{Q\in\mathcal E}u_Q,h\right\rangle
    \right|
    &\leq
    \left(
        \sum_{j,k\geq0}
        \sum_{Q\in\mathcal E\cap\mathcal Q_{j,k}}
            \int_Q|\varphi_{j,k}*g|^2
    \right)^{1/2}\times
    \left(
        \sum_{j,k\geq0}
        \sum_{Q\in\mathcal E\cap\mathcal Q_{j,k}}
            \int_Q|\eta_{j,k}^**h|^2
    \right)^{1/2}.
\end{align*}
For fixed $j,k$, the rectangles have disjoint
interiors. Hence the second factor is bounded by
\[
    \big(
        \sum_{j,k\geq0}\|\eta_{j,k}^**h\|_2^2
    \big)^{1/2}
    \leq C\|h\|_2.
\]
The last inequality follows from Plancherel's
theorem and
\[
    \sup_{\xi_i}
        \sum_{j\geq0}
            |\widehat{\eta^{(i)}_j}(\xi_i)|^2
    <\infty.
\]
The low-frequency term is bounded, while
cancellation and smoothness give
\[
    |\widehat{\eta^{(i)}_j}(\xi_i)|
    \leq
    C\min\{2^{-j}|\xi_i|,
                  (2^{-j}|\xi_i|)^{-1}\},
    \qquad j\geq1,\quad \xi_i\neq0.
\]
Splitting the sum at $2^{-j}|\xi_i|=1$
proves the uniform bound.
Duality now yields
\eqref{eq:localized-synthesis-estimate}.

The kernels $\varphi^{(i)}_j$ satisfy the same
Fourier estimates used above for $\eta^{(i)}_j$.
In particular,
\[
    \sup_{\xi_i\in\mathbb R^{n_i}}
        \sum_{j\geq0}
            |\widehat{\varphi^{(i)}_j}(\xi_i)|^2
    <\infty.
\]
Since
$\varphi_{j,k}=\varphi^{(1)}_j\otimes\varphi^{(2)}_k$,
Plancherel's theorem and Tonelli's theorem give
\begin{align*}
    \sum_{j,k\geq0}\|\varphi_{j,k}*g\|_2^2
    &=
    \int\limits_{\mathbb R^{n_1}\times\mathbb R^{n_2}}
        |\widehat g(\xi_1,\xi_2)|^2
        \left(
            \sum_{j\geq0}
                |\widehat{\varphi^{(1)}_j}(\xi_1)|^2
        \right)\times
        \left(
            \sum_{k\geq0}
                |\widehat{\varphi^{(2)}_k}(\xi_2)|^2
        \right)d\xi_1\,d\xi_2\leq C\|g\|_2^2.
\end{align*}
Moreover, for each fixed $j,k$,
$\mathcal Q_{j,k}$ partitions the whole product
space. Hence
\begin{equation}\label{eq:localized-total-energy}
    \sum_{j,k\geq0}
    \sum_{Q\in\mathcal Q_{j,k}}
        \int_Q|\varphi_{j,k}*g|^2
    =
    \sum_{j,k\geq0}\|\varphi_{j,k}*g\|_2^2
    \leq C\|g\|_2^2.
\end{equation}

We now prove
convergence of $\sum_Q u_Q$.
Given $\varepsilon>0$, choose a finite collection
$\mathcal E_0$ of coefficient rectangles such that
\[
    \sum_{j,k\geq0}
    \sum_{Q\in\mathcal Q_{j,k}\setminus\mathcal E_0}
        \int_Q|\varphi_{j,k}*g|^2
    <\varepsilon^2.
\]
For every finite collection $\mathcal E$
disjoint from $\mathcal E_0$,
\eqref{eq:localized-synthesis-estimate} gives
\[
    \left\|\sum_{Q\in\mathcal E}u_Q\right\|_2^2
    \leq
    C\sum_{j,k\geq0}
    \sum_{Q\in\mathcal E\cap\mathcal Q_{j,k}}
        \int_Q|\varphi_{j,k}*g|^2
    \leq C\varepsilon^2.
\]
Thus every finite sum of terms outside
$\mathcal E_0$ has small $L^2$ norm.
This is the unconditional Cauchy criterion:
the series converges in $L^2$, independently
of the order of its terms.

The same argument applies to any subcollection.
Passing to the limit in
\eqref{eq:localized-synthesis-estimate}
therefore extends that estimate to infinite
collections, with the sum on the left
interpreted as an $L^2$ limit.

It remains to identify the sum of the full series.
For fixed $j,k$, the partition property gives
\[
    \sum_{Q\in\mathcal Q_{j,k}}
        \mathbf1_Q(\varphi_{j,k}*g)
    =
    \varphi_{j,k}*g
    \quad\text{in }L^2.
\]
Convolution with $\eta_{j,k}$ is bounded on
$L^2$, since $\eta_{j,k}\in L^1$.
Using \eqref{eq:u_Q}, we consequently obtain
\[
    \sum_{Q\in\mathcal Q_{j,k}}u_Q
    =
    \eta_{j,k}*(\varphi_{j,k}*g)
    \quad\text{in }L^2.
\]
Unconditional convergence allows us to group
the terms according to $j,k$.
Equation~\eqref{eq:localized-reproducing-formula}
then yields
\[
    \sum_Q u_Q
    =
    \sum_{j,k\geq0}
        \eta_{j,k}*(\varphi_{j,k}*g)
    =
    g
    \quad\text{in }L^2.
\]

\medskip
\noindent
\textbf{Step 5: Grouping by level sets.} For $m\in\mathbb Z$, put
\[
    E_m=\{x:S_bg(x)>2^m\},
\]
and
\[
    \mathcal B_m
    =
    \left\{
        Q:
        |Q\cap E_m|>\frac{|Q|}{2},
        \quad
        |Q\cap E_{m+1}|\leq\frac{|Q|}{2}
    \right\}.
\]
Every rectangle with $c_Q>0$ belongs to exactly
one $\mathcal B_m$.
Indeed, $S_bg\geq c_Q$ almost everywhere on $Q$,
so $|Q\cap E_m|=|Q|$ for sufficiently negative $m$.
On the other hand,
\[
    |E_m|\leq2^{-m}\|S_bg\|_1\longrightarrow0
    \qquad(m\to\infty).
\]
The required index is the largest $m$ with
$|Q\cap E_m|>|Q|/2$. Fix
\[
    0<\theta<\frac1{2\tau^{n_1+n_2}},
    \qquad
    \Omega_m
    =
    \{M_{\mathrm{str}}\mathbf1_{E_m}>\theta\}.
\]
The sets $\Omega_m$ are open, and
\begin{equation}\label{eq:localized-level-measure}
    |\Omega_m|
    \leq
    \theta^{-2}
        \|M_{\mathrm{str}}\mathbf1_{E_m}\|_2^2
    \leq C_\theta|E_m|.
\end{equation}
For $Q\in\mathcal B_m$ and $x\in\tau Q$,
\[
    M_{\mathrm{str}}\mathbf1_{E_m}(x)
    \geq\frac{|Q\cap E_m|}{|\tau Q|}
    >
    \frac1{2\tau^{n_1+n_2}}
    >\theta.
\]
Therefore,
\begin{equation}\label{eq:localized-halo-inclusion}
    \tau Q\subset\Omega_m,
    \qquad Q\in\mathcal B_m.
\end{equation}

\medskip
\noindent
\textbf{Step 6: Definition and normalization of the atoms.} We have
\begin{align}
    &\sum_{j,k\geq0}
    \sum_{Q\in\mathcal B_m\cap\mathcal Q_{j,k}}
        \int_Q|\varphi_{j,k}*g|^2
    \leq
    \sum_{Q\in\mathcal B_m}c_Q^2|Q|
    \leq
    2\sum_{Q\in\mathcal B_m}
        c_Q^2|Q\setminus E_{m+1}|
    \leq
    2\int_{\Omega_m\setminus E_{m+1}}(S_bg)^2
    \leq2^{2m+3}|\Omega_m|.
    \label{eq:localized-level-energy}
\end{align}
The second inequality uses
$|Q\setminus E_{m+1}|\geq|Q|/2$.
For the third, Tonelli's theorem and
$Q\subset\Omega_m$ give
\[
    \sum_{Q\in\mathcal B_m}
        c_Q^2|Q\setminus E_{m+1}|
    =
    \int_{\Omega_m\setminus E_{m+1}}
        \sum_{Q\in\mathcal B_m}
            c_Q^2\mathbf1_Q(x)\,dx.
\]
The sum inside the integral is bounded by
$(S_bg(x))^2$, since it contains only some
of its nonnegative terms.
No disjointness of these rectangles is needed.
Finally, $S_bg\leq2^{m+1}$ outside $E_{m+1}$.

For each nonempty group, define
\[
    \lambda_m=C_0\,2^m|\Omega_m|,
    \qquad
    a_m=\lambda_m^{-1}
        \sum_{Q\in\mathcal B_m}u_Q.
\]
The sum converges in $L^2$ by Step~4.
Equations \eqref{eq:localized-piece-support}
and \eqref{eq:localized-halo-inclusion} imply
that $a_m$ vanishes outside $\Omega_m$.
Moreover,
\[
    \|a_m\|_2^2
    \leq
    \frac{C\,2^{2m}|\Omega_m|}{\lambda_m^2}
    \leq|\Omega_m|^{-1}
\]
for a sufficiently large fixed $C_0$.

\medskip
\noindent
\textbf{Step 7: Additional control of the open sets.} We now prove the additional support estimate
needed in this lemma: every $\Omega_m$ is
contained in one fixed enlargement of
$P_1\times P_2$.
This control must be retained throughout the
atomic construction.

Let $Q=I\times J\in\mathcal Q_{j,k}$ with
$c_Q>0$.
There is $y=(y_1,y_2)\in Q$ such that
$(\varphi_{j,k}*g)(y)\neq0$.
Since $g$ is supported in $2P_1\times2P_2$,
\[
    y_1\in2P_1+[-K2^{-j},K2^{-j}]^{n_1},
    \qquad
    y_2\in2P_2+[-K2^{-k},K2^{-k}]^{n_2}.
\]
Here $+$ denotes the sum of sets.
If $p_i$ is the center of $P_i$, then
\[
    |y_i-p_i|_\infty\leq1+K,
\]
because $\ell(P_i)=1$ and $j,k\geq0$.
The side lengths of $Q$ are at most $b$,
so every $x\in Q$ satisfies
\[
    |x_i-p_i|_\infty
    \leq|x_i-y_i|_\infty+|y_i-p_i|_\infty
    \leq b+1+K.
\]
For a fixed $c>2(b+1+K)$, we therefore have
\[
    \bigcup_{\{Q:c_Q>0\}}Q
    \subset cP_1\times cP_2.
\]
Consequently,
\[
    E_m\subset cP_1\times cP_2
    \qquad\text{for every }m.
\]

For arbitrary cubes $I\ni x_1$ and $J\ni x_2$,
\[
    \frac1{|I|\,|J|}
        \int_{I\times J}\mathbf1_{E_m}
    \leq
    \frac{|I\cap cP_1|}{|I|}
    \frac{|J\cap cP_2|}{|J|}.
\]
Taking suprema gives
\[
    M_{\mathrm{str}}\mathbf1_{E_m}(x_1,x_2)
    \leq
    M_1\mathbf1_{cP_1}(x_1)
    M_2\mathbf1_{cP_2}(x_2).
\]
Both factors lie in $[0,1]$.
Their product can exceed $\theta$ only if
both factors exceed $\theta$, and hence
\[
    \Omega_m
    \subset
    \{M_1\mathbf1_{cP_1}>\theta\}
    \times
    \{M_2\mathbf1_{cP_2}>\theta\}.
\]

For a cube $P\subset\mathbb R^n$ and $x\notin P$,
every cube $I$ containing $x$ and intersecting
$P$ satisfies
\[
    \sqrt n\,\ell(I)\geq\operatorname{dist}(x,P).
\]
Thus
\[
    \frac{|I\cap P|}{|I|}
    \leq
    n^{n/2}
    \left(
        \frac{\ell(P)}{\operatorname{dist}(x,P)}
    \right)^n.
\]
Cubes disjoint from $P$ contribute zero, so
\[
    M\mathbf1_P(x)
    \leq
    n^{n/2}
    \left(
        \frac{\ell(P)}{\operatorname{dist}(x,P)}
    \right)^n.
\]
It follows that $\{M\mathbf1_P>\theta\}$
lies in a fixed concentric enlargement of $P$.
Applying this to $cP_1,cP_2$ gives
\begin{equation}\label{eq:localized-open-set-support}
    \Omega_m\subset A_0P_1\times A_0P_2
    \qquad\text{for every }m,
\end{equation}
with $A_0$ independent of $g$, $m$, and the
positions of the cubes.

\medskip
\noindent
\textbf{Step 8: Rectangular components and cancellation.} Every $Q\in\mathcal B_m$ is contained in a
rectangle from $\mathcal M_{1,1}(\Omega_m)$.
Indeed, $Q\subset\Omega_m$, and boundedness
of $\Omega_m$ leaves only finitely many dyadic
rectangles containing $Q$ and contained in
$\Omega_m$.
Choose one maximal such rectangle $R(Q)$,
assigning each $Q$ only once.

For $R\in\mathcal M_{1,1}(\Omega_m)$, define
\[
    (a_m)_R
    =
    \lambda_m^{-1}
    \sum_{{Q\in\mathcal B_m, \ R(Q)=R}}u_Q.
\]
By \eqref{eq:localized-synthesis-estimate},
\[
    \|(a_m)_R\|_2^2
    \leq
    \frac{C}{\lambda_m^2}
    \sum_{j,k\geq0}
    \sum_{\substack{Q\in\mathcal B_m\cap\mathcal Q_{j,k}\\
                    R(Q)=R}}
        \int_Q|\varphi_{j,k}*g|^2.
\]
For infinite collections, this follows by
passing to the $L^2$ limit of finite sums.
Since each $Q$ is assigned exactly once,
\eqref{eq:localized-level-energy} gives
\[
    \sum_{R\in\mathcal M_{1,1}(\Omega_m)}
        \|(a_m)_R\|_2^2
    \leq
    \frac{C\,2^{2m}|\Omega_m|}{\lambda_m^2}
    \leq|\Omega_m|^{-1},
\]
after increasing $C_0$ if necessary.
The same estimate for tails gives
\[
    a_m
    =
    \sum_{R\in\mathcal M_{1,1}(\Omega_m)}(a_m)_R
    \quad\text{in }L^2.
\]
Since $\tau Q\subset\tau R(Q)$,
each component is supported in $\tau R$.

Let $R=I_R\times J_R$ and suppose
$\ell(I_R)<\rho$.
Every assigned $Q\in\mathcal Q_{j,k}$ satisfies
\[
    b2^{-j}\leq\ell(I_R)<\rho\leq b.
\]
Therefore $j\geq1$, and $\int\eta^{(1)}_j=0$.
By \eqref{eq:u_Q} and Fubini's theorem,
\[
    \int_{\mathbb R^{n_1}}u_Q(x_1,x_2)\,dx_1=0.
\]
This cancellation passes to the $L^2$ sum,
because all terms are supported in $\tau R$
and
\[
    \left\|
        \int_{\tau I_R}v(x_1,\cdot)\,dx_1
    \right\|_{L^2(\mathbb R^{n_2})}
    \leq|\tau I_R|^{1/2}\|v\|_2.
\]
Hence
\[
    \int_{\mathbb R^{n_1}}(a_m)_R(x_1,x_2)\,dx_1=0
    \quad\text{for almost every }x_2.
\]

Similarly, $\ell(J_R)<\rho$ forces $k\geq1$
for every assigned rectangle.
Using $\int\eta^{(2)}_k=0$ and
\[
    \left\|
        \int_{\tau J_R}v(\cdot,x_2)\,dx_2
    \right\|_{L^2(\mathbb R^{n_1})}
    \leq|\tau J_R|^{1/2}\|v\|_2,
\]
we obtain cancellation in the second variable.
Thus $a_m$ satisfies
Definition~\ref{def:local-product-atom}.

Finally, \eqref{eq:localized-open-set-support}
implies
\[
    R\subset A_0P_1\times A_0P_2,
    \qquad
    \operatorname{supp}(a_m)_R
    \subset\tau R
    \subset(\tau A_0P_1)\times(\tau A_0P_2).
\]
Taking $A=\tau A_0$ gives the required common
support bound.

\medskip
\noindent
\textbf{Step 9: Coefficient bounds and convergence.} Equations \eqref{eq:localized-level-measure}
and \eqref{eq:localized-discrete-square} give
\begin{align*}
    \sum_m|\lambda_m|
    &=
    C_0\sum_m2^m|\Omega_m|
    \leq C\sum_m2^m|E_m|=
    C\int
        \sum_{\{m:2^m<S_bg(x)\}}2^m\,dx\ leq C\|S_bg\|_1
    \leq C\|g\|_{h^1_{1,1}}.
\end{align*}
The sums involving $\lambda_m$ run over the
nonempty groups.

Every nonzero $u_Q$ occurs in exactly one
$\lambda_ma_m$.
The unconditional $L^2$ convergence from Step~4
therefore yields
\[
    g=\sum_m\lambda_ma_m
    \quad\text{in }L^2.
\]
More explicitly, the squared $L^2$ norm of
the tail is bounded by
\[
    C\sum_{|m|>N}
    \sum_{j,k\geq0}
    \sum_{Q\in\mathcal B_m\cap\mathcal Q_{j,k}}
        \int_Q|\varphi_{j,k}*g|^2,
\]
which tends to zero by
\eqref{eq:localized-total-energy}.

By \eqref{eq:local-product-atom-bound}, the
series also converges absolutely in $h^1_{1,1}$,
and hence in $L^1$ by
\eqref{eq:product-local-L1-embedding}.
The $L^2$ and $L^1$ limits agree as distributions,
so both are equal to $g$.

\medskip
\noindent
\textbf{Step 10: Removal of the $L^2$ assumption.}

By the density of Schwartz functions in
$h^1_{1,1}$, recalled above, choose
$u_m\in\mathcal S$ such that
\[
    \|u_m-g\|_{h^1_{1,1}}
    \leq C_\chi^{-1}2^{-m}\|g\|_{h^1_{1,1}}.
\]
By \eqref{eq:product-local-L1-embedding},
this convergence also holds in $L^1$.
Set $g_m=\chi_1\chi_2u_m$.
Then $g_m\in C^\infty_c$ is supported in
$2P_1\times2P_2$ and, since $\chi_1\chi_2g=g$,
\[
    \|g_m-g\|_{h^1_{1,1}}
    =
    \|\chi_1\chi_2(u_m-g)\|_{h^1_{1,1}}
    \leq 2^{-m}\|g\|_{h^1_{1,1}}.
\]
and set $g_m=\chi_1\chi_2u_m$.
Then $g_m\in C^\infty_c$ is supported in
$2P_1\times2P_2$, and
\[
    \|g_m-g\|_{h^1_{1,1}}
    \leq2^{-m}\|g\|_{h^1_{1,1}},
\]
because $\chi_1\chi_2g=g$.

The telescoping identity gives
\[
    g=g_1+\sum_{m=2}^{\infty}(g_m-g_{m-1})
    \quad\text{in }h^1_{1,1}.
\]
Moreover,
$
    \|g_1\|_{h^1_{1,1}}
    \leq\frac32\|g\|_{h^1_{1,1}}
$,
and
\[
    \|g_m-g_{m-1}\|_{h^1_{1,1}}
    \leq
    (2^{-m}+2^{-(m-1)})\|g\|_{h^1_{1,1}}.
\]
Hence
\[
    \|g_1\|_{h^1_{1,1}}
    +
    \sum_{m=2}^{\infty}
        \|g_m-g_{m-1}\|_{h^1_{1,1}}
    \leq3\|g\|_{h^1_{1,1}}.
\]

Apply the $L^2$ case to $g_1$ and each
$g_m-g_{m-1}$.
All these functions are supported in
$2P_1\times2P_2$, so their decompositions have
open sets and rectangular components in the
same rectangle $AP_1\times AP_2$.
Writing the resulting coefficients as
$\lambda_{m,\nu}$, we obtain
\[
    \sum_{m,\nu}|\lambda_{m,\nu}|
    \leq C\|g\|_{h^1_{1,1}}.
\]
The uniform atom estimate gives absolute
convergence of the combined series in
$h^1_{1,1}$.
Summing first in $\nu$ recovers the telescoping
series, so its sum is $g$.
Enumerating all terms gives the required
decomposition.
Its convergence in $L^1$ follows from
\eqref{eq:product-local-L1-embedding}.

\medskip
\noindent
\textbf{Step 11: Return to arbitrary scales.}

Apply the unit-scale result to
\[
    \widetilde g(u_1,u_2)
    =
    r_1^{n_1}r_2^{n_2}g(r_1u_1,r_2u_2),
\]
supported in the unit cubes
$r_1^{-1}P_1\times r_2^{-1}P_2$.
If
$\widetilde g=\sum_\nu\lambda_\nu\widetilde a_\nu$,
put
\[
    a_\nu(x_1,x_2)
    =
    r_1^{-n_1}r_2^{-n_2}
    \widetilde a_\nu(x_1/r_1,x_2/r_2).
\]
The atomic scaling verified in
Proposition~\ref{prop:local-product-atoms}
and \eqref{eq:local-product-dilation} give
\[
    g=\sum_\nu\lambda_\nu a_\nu,
    \qquad
    \sum_\nu|\lambda_\nu|
    \leq C\|g\|_{h^1_{r_1,r_2}}.
\]
The open sets and rectangular components
are carried into $AP_1\times AP_2$.
Convergence in the Hardy norm, in $L^1$,
and, when applicable, in $L^2$, is preserved
by the dilation.
\end{proof}

\subsection{Atomic decompositions inside covering cubes}
\label{subsec:smaller-rectangles}

We now apply
Lemma~\ref{lem:localized-atomic-decomposition}
to functions supported in $Q_1^*\times Q_2^*$.
We first subdivide their supports into smaller
rectangles whose fixed enlargements remain
inside $Q_1^{**}\times Q_2^{**}$.

\begin{lemma}[Finite localization at smaller scales]
\label{lem:finite-localization}
Let $Q_i\subset\mathbb R^{n_i}$ be cubes and
$0<\eta_i\leq1/4$, $i=1,2$.
There are finite families of cubes $\mathcal P_i$
and smooth functions
$\{\chi_{i,P}\}_{P\in\mathcal P_i}$ such that
\[
    \ell(P)=\eta_i\ell(Q_i),\qquad
    P\cap Q_i^*\neq\varnothing,\qquad
    P\subset Q_i^{**},
\]
and
\begin{equation}\label{eq:small-cutoff-properties}
\begin{split}
    &0\leq\chi_{i,P}\leq1,\qquad
    \operatorname{supp}\chi_{i,P}\subset P,\\
    &\sum_{P\in\mathcal P_i}\chi_{i,P}=1
        \quad\text{on }Q_i^*,\\
    &\|\nabla\chi_{i,P}\|_\infty
        \leq\frac{C}{\eta_i\ell(Q_i)},\qquad
    \#\mathcal P_i\leq C\eta_i^{-n_i}.
\end{split}
\end{equation}

For every $g\in h^1_{\ell(Q_1),\ell(Q_2)}$
supported in $Q_1^*\times Q_2^*$,
\begin{equation}\label{eq:finite-localization-decomposition}
    g=
    \sum_{P_1\in\mathcal P_1}
    \sum_{P_2\in\mathcal P_2}
        \chi_{1,P_1}(x_1)\chi_{2,P_2}(x_2)g,
\end{equation}
and
\begin{equation}\label{eq:finite-localization-norm}
\begin{split}
    &\sum_{P_1\in\mathcal P_1}
    \sum_{P_2\in\mathcal P_2}
        \|\chi_{1,P_1}\chi_{2,P_2}g\|
            _{h^1_{\eta_1\ell(Q_1),\eta_2\ell(Q_2)}}\leq
    C\eta_1^{-n_1}\eta_2^{-n_2}
        \|g\|_{h^1_{\ell(Q_1),\ell(Q_2)}}.
\end{split}
\end{equation}
If $A\geq1$ and $\eta_i\leq1/(2A)$, then also
\begin{equation}\label{eq:small-cube-enlargement}
    AP\subset Q_i^{**},
    \qquad P\in\mathcal P_i.
\end{equation}
The constants are independent of the cubes,
$\eta_i$, and $g$.
\end{lemma}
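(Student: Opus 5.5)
The plan is to construct $\mathcal P_i$ as a uniform grid of cubes with side length $\eta_i\ell(Q_i)$ together with a standard grid partition of unity. Then \eqref{eq:finite-localization-norm} follows from the scale comparison and the cutoff estimate in Lemma~\ref{lem:local-scales-cutoffs}.

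\emph{Construction.} Fix $i$ and write $s=\eta_i\ell(Q_i)$. I would not use $P$ itself as a grid cell, since then the cutoffs could not be supported in $P$ and still sum to one. Instead, tile $\mathbb R^{n_i}$ by the closed grid cubes $G$ of side $s/2$ aligned with the centre of $Q_i$, and for each grid cube $G$ meeting $Q_i^*$ set $P=2G$, the concentric cube of side $s$. Take a fixed bump $\beta\in C^\infty_c$ equal to $1$ on $[-\tfrac14,\tfrac14]^{n_i}$ and supported in $(-\tfrac12,\tfrac12)^{n_i}$. Rescale it so that it is $1$ on $G$ and supported in the interior of $P$, and normalize by the sum of all translates:
\[
\chi_{i,P}=\beta_P\Big/\sum_{P'}\beta_{P'}.
\]
The full-lattice sum in the denominator lies between $1$ and a dimensional constant. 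Its gradient is $O(1/s)$, since each point lies in boundedly many $P'$. Restricting the sum to the $P$ meeting $Q_i^*$ gives $\sum_P\chi_{i,P}=1$ on $Q_i^*$, together with $0\le\chi_{i,P}\le1$ and $\|\nabla\chi_{i,P}\|_\infty\le C/s$.

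The remaining geometric properties follow from the sizes involved. Since $G$ meets $Q_i^*$ and $s\le\ell(Q_i)/4$, the cube $P$ meets $Q_i^*$. Its points lie within $s\le\ell(Q_i)/4$ of $Q_i^*$ in $\ell^\infty$, while $Q_i^*=2Q_i$ and $Q_i^{**}=4Q_i$ have half-sides $\ell(Q_i)$ and $2\ell(Q_i)$; hence $P\subset Q_i^{**}$. The count $\#\mathcal P_i\le C\eta_i^{-n_i}$ is a volume count. For \eqref{eq:small-cube-enlargement}, $AP$ has half-side $As/2\le \ell(Q_i)/4$ and centre within $\ell(Q_i)+s/4$ of the centre of $Q_i$. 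This yields $AP\subset Q_i^{**}$ when $\eta_i\le1/(2A)$, possibly after adjusting the admissible constant, which I would check carefully.

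\emph{Decomposition and norm bound.} Identity \eqref{eq:finite-localization-decomposition} is immediate because $\operatorname{supp}g\subset Q_1^*\times Q_2^*$ and the double sum of $\chi_{1,P_1}\chi_{2,P_2}$ equals $1$ there. For the norms, take each term and apply \eqref{eq:local-scale-comparison} with $a_i=\eta_i\ell(Q_i)\le r_i=\ell(Q_i)$. Then apply \eqref{eq:local-product-cutoff} at the scales $\eta_i\ell(Q_i)$. Since
\[
\|\chi_{i,P}\|_\infty+\eta_i\ell(Q_i)\|\nabla\chi_{i,P}\|_\infty\le C,
\]
this gives the first inequality below, and the second is \eqref{eq:local-scale-comparison}:
\[
\|\chi_{1,P_1}\chi_{2,P_2}g\|_{h^1_{\eta_1\ell(Q_1),\eta_2\ell(Q_2)}}
\le C\|g\|_{h^1_{\eta_1\ell(Q_1),\eta_2\ell(Q_2)}}
\le C\|g\|_{h^1_{\ell(Q_1),\ell(Q_2)}}.
\]
Summing over the at most $C\eta_1^{-n_1}\eta_2^{-n_2}$ pairs gives \eqref{eq:finite-localization-norm}.

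I expect no real analytic obstacle, because Lemma~\ref{lem:local-scales-cutoffs} handles the analysis. The delicate part is bookkeeping: making the partition of unity sum exactly to one on $Q_i^*$ while each $\chi_{i,P}$ is supported in $P$ and all constants stay uniform. The dilation-invariant construction above is intended to achieve exactly this.
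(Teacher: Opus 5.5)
Your construction and the analytic step are the paper's. The paper also uses cubes of side $s=\eta_i\ell(Q_i)$ with centres on a lattice of spacing $s/2$, divides translates of a fixed bump by the full-lattice sum, and gets the norm bound from \eqref{eq:local-scale-comparison} followed by \eqref{eq:local-product-cutoff}. There is, however, one concrete slip in how you select $\mathcal P_i$. You keep $P=2G$ only when the small cell $G$ meets $Q_i^*$, and for that family the identity $\sum_{P\in\mathcal P_i}\chi_{i,P}=1$ on $Q_i^*$ can fail. Since $\beta$ is continuous and equals $1$ on $[-\frac14,\frac14]^{n_i}$, it is positive on some cube $[-\frac14-\epsilon,\frac14+\epsilon]^{n_i}$. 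Hence $\beta_{2G'}>0$ on an $\epsilon s$-neighbourhood of $G'$. Because $\ell(Q_i)$ need not be a multiple of $s/2$, for suitable $\eta_i$ there is a cell $G'$ disjoint from $Q_i^*$ at $\ell^\infty$-distance less than $\epsilon s$. Then $\chi_{i,2G'}$ is positive somewhere on $Q_i^*$, but $2G'$ is not in your family. The retained cutoffs therefore sum to less than $1$ there, which breaks \eqref{eq:finite-localization-decomposition} for $g$ that does not vanish near the boundary of $Q_1^*\times Q_2^*$.

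The fix is the criterion you yourself invoke a few lines later, which is also the paper's: retain every $P=2G$ with $P\cap Q_i^*\neq\varnothing$. Then any $P'$ with $\chi_{i,P'}(x)\neq0$ for some $x\in Q_i^*$ contains $x$ and is retained, so the full-lattice identity restricts to $Q_i^*$. The count $\#\mathcal P_i\le C\eta_i^{-n_i}$ and the inclusion $P\subset Q_i^{**}$ are unaffected. For \eqref{eq:small-cube-enlargement}, the centre of a retained $P$ lies within $\ell(Q_i)+s/2$ of the centre of $Q_i$ in each coordinate. So every point of $AP$ lies within $(1+(A+1)\eta_i/2)\ell(Q_i)\le\frac32\ell(Q_i)<2\ell(Q_i)$ once $A\geq1$ and $\eta_i\le1/(2A)$. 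No adjustment of the admissible constant is needed, so your hedge can be dropped. Under your original criterion your own figures already give $\ell(Q_i)+\frac s4+\frac{\ell(Q_i)}4<2\ell(Q_i)$, and the computation above covers the corrected family.
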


\begin{proof}
For each $i$, take cubes of side length
$\eta_i\ell(Q_i)$ whose centers form a lattice
of spacing $\eta_i\ell(Q_i)/2$.
A smooth partition of unity subordinate to
these cubes is obtained by translating and
dilating a fixed cutoff and dividing by the
sum of its translates.
The bounded overlap and uniform lower bound
for this sum give
\[
    0\leq\chi_{i,P}\leq1,\qquad
    \operatorname{supp}\chi_{i,P}\subset P,\qquad
    \|\nabla\chi_{i,P}\|_\infty
    \leq\frac{C}{\eta_i\ell(Q_i)}.
\]
Retain only cubes intersecting $Q_i^*$.
Their cutoffs sum to one on $Q_i^*$,
and a lattice count gives
$\#\mathcal P_i\leq C\eta_i^{-n_i}$. Since $P\cap Q_i^*\neq\varnothing$, its center
differs from the center of $Q_i$ by at most
$
    \ell(Q_i)+\frac{\eta_i\ell(Q_i)}2
$
in each coordinate.
Every point of $P$ therefore differs from
that center by at most
$(1+\eta_i)\ell(Q_i)<2\ell(Q_i)$.
Thus $P\subset Q_i^{**}$.

The partition identities give
\eqref{eq:finite-localization-decomposition}.
By \eqref{eq:local-product-cutoff}, at scales
$\eta_i\ell(Q_i)$,
\[
    \|\chi_{1,P_1}\chi_{2,P_2}g\|
        _{h^1_{\eta_1\ell(Q_1),\eta_2\ell(Q_2)}}
    \leq
    C\|g\|_{h^1_{\eta_1\ell(Q_1),\eta_2\ell(Q_2)}}.
\]
Equation~\eqref{eq:local-scale-comparison}
bounds the right-hand side by
$C\|g\|_{h^1_{\ell(Q_1),\ell(Q_2)}}$.
Summing over the finite families proves
\eqref{eq:finite-localization-norm}.

Finally, every point of $AP$ differs from
the center of $Q_i$ by at most
\[
    \left(1+\frac{(A+1)\eta_i}{2}\right)\ell(Q_i)
    \leq
    \left(1+\frac{A+1}{4A}\right)\ell(Q_i)
    \leq\frac32\ell(Q_i)<2\ell(Q_i)
\]
in each coordinate.
This proves \eqref{eq:small-cube-enlargement}.
\end{proof}

\begin{corollary}\label{cor:atoms-inside-enlargements}
Let $A\geq1$ be the constant from
Lemma~\ref{lem:localized-atomic-decomposition},
and fix
\[
    0<\eta\leq
    \min\left\{\frac14,\frac1{2A}\right\}.
\]
Let $Q_i\subset\mathbb R^{n_i}$ be cubes, and
let $g\in h^1_{\ell(Q_1),\ell(Q_2)}$ vanish
almost everywhere outside $Q_1^*\times Q_2^*$.
Then
$
    g=\sum_{\nu=1}^{\infty}\lambda_\nu a_\nu
$
in $h^1_{\ell(Q_1),\ell(Q_2)}$ and in $L^1$,
where the $a_\nu$ are local product atoms
at scales $\eta\ell(Q_1),\eta\ell(Q_2)$ and
\[
    \sum_{\nu=1}^{\infty}|\lambda_\nu|
    \leq C\|g\|_{h^1_{\ell(Q_1),\ell(Q_2)}}.
\]
Their associated open sets and all rectangular
components are supported inside
$Q_1^{**}\times Q_2^{**}$.
Moreover,
\begin{equation}\label{eq:localized-atom-original-scales}
    \|a_\nu\|_{h^1_{\ell(Q_1),\ell(Q_2)}}\leq C.
\end{equation}
If $g\in L^2$, the decomposition can also
be chosen to converge in $L^2$.
The constants may depend on the fixed $\eta$,
but are independent of $Q_1,Q_2$ and $g$.

In particular, if $Q_i\in\mathcal Q_i$, every
atom in this decomposition satisfies
Definition~\ref{def:covering-product-atom}.
\end{corollary}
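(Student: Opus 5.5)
We combine Lemma~\ref{lem:finite-localization} with Lemma~\ref{lem:localized-atomic-decomposition}, applied at the smaller scales $r_i=\eta\ell(Q_i)$. Take $\eta_1=\eta_2=\eta$. Since $\eta\leq1/4$ and $\eta\leq1/(2A)$, Lemma~\ref{lem:finite-localization} provides finite families $\mathcal P_i$ of cubes of side $\eta\ell(Q_i)$ with $AP\subset Q_i^{**}$. It also gives the splitting \eqref{eq:finite-localization-decomposition}, $g=\sum_{P_1,P_2}g_{P_1,P_2}$ with $g_{P_1,P_2}=\chi_{1,P_1}\chi_{2,P_2}g$, together with the bound \eqref{eq:finite-localization-norm}:
\[
    \sum_{P_1,P_2}\|g_{P_1,P_2}\|_{h^1_{\eta\ell(Q_1),\eta\ell(Q_2)}}
    \leq C\eta^{-n_1-n_2}\|g\|_{h^1_{\ell(Q_1),\ell(Q_2)}}.
\]
The factor $\eta^{-n_1-n_2}$ is absorbed into the constant, since $\eta$ is fixed. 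Each piece $g_{P_1,P_2}$ vanishes outside $P_1\times P_2$. It is in $L^2$ whenever $g$ is, and in $h^1$ at the small scales by \eqref{eq:local-product-cutoff} and \eqref{eq:local-scale-comparison}.

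Next we apply Lemma~\ref{lem:localized-atomic-decomposition} to each $g_{P_1,P_2}$ with $r_i=\ell(P_i)=\eta\ell(Q_i)$. This gives local product atoms at scales $\eta\ell(Q_1),\eta\ell(Q_2)$. Their open sets and rectangular components lie in $AP_1\times AP_2\subset Q_1^{**}\times Q_2^{**}$. Their coefficients are bounded in $\ell^1$ by $C\|g_{P_1,P_2}\|_{h^1_{\eta\ell(Q_1),\eta\ell(Q_2)}}$. Enumerating the finitely many pieces together with their atoms gives a single series whose total coefficient sum is at most $C\|g\|_{h^1_{\ell(Q_1),\ell(Q_2)}}$.

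For \eqref{eq:localized-atom-original-scales}, each atom has $\|a_\nu\|_{h^1_{\eta\ell(Q_1),\eta\ell(Q_2)}}\leq C$ by \eqref{eq:local-product-atom-bound}. The reverse scale comparison \eqref{eq:local-scale-comparison-reverse} then gives
\[
    \|a_\nu\|_{h^1_{\ell(Q_1),\ell(Q_2)}}\leq C(1+\log\eta^{-1})^2.
\]
This is uniform because $\eta$ is fixed. As a result, the combined series converges absolutely in $h^1_{\ell(Q_1),\ell(Q_2)}$, and therefore in $L^1$ by \eqref{eq:product-local-L1-embedding}. Its sum is $g$: summing first over the atoms of each piece recovers $g_{P_1,P_2}$, and the outer sum is finite. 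When $g\in L^2$, each inner series converges in $L^2$, and a finite sum of such series still converges in $L^2$.

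For the final assertion, let $Q_i\in\mathcal Q_i$. Each atom is a local product atom at scales $r_i=\eta\ell(Q_i)$ with $\Omega_\nu\subset Q_1^{**}\times Q_2^{**}$. Each component satisfies $(a_\nu)_R=0$ outside $\tau R$ and outside $Q_1^{**}\times Q_2^{**}$, so it vanishes outside $\tau R\cap(Q_1^{**}\times Q_2^{**})$. The size and cancellation conditions are identical in the two definitions, so the atom satisfies Definition~\ref{def:covering-product-atom}. The main point needing care is bookkeeping rather than a real obstacle. The support control from Step~7 of Lemma~\ref{lem:localized-atomic-decomposition} must be applied to each small cube $P_i$, not to $Q_i^*$; the hypothesis $\eta\leq1/(2A)$ is exactly what guarantees $AP_i\subset Q_i^{**}$.
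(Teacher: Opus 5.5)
Your proposal is correct and follows the paper's proof essentially step for step. Both arguments localize with Lemma~\ref{lem:finite-localization} using $\eta_1=\eta_2=\eta$, apply Lemma~\ref{lem:localized-atomic-decomposition} to each piece at scales $\eta\ell(Q_i)$ with $AP_i\subset Q_i^{**}$, and use \eqref{eq:local-scale-comparison-reverse} with \eqref{eq:local-product-atom-bound} to obtain \eqref{eq:localized-atom-original-scales}, identifying the limit through $L^1$; your explicit check that each component vanishes outside $\tau R\cap(Q_1^{**}\times Q_2^{**})$ fills in the final clause, which the paper leaves implicit.
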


\begin{proof}
Apply Lemma~\ref{lem:finite-localization}
with $\eta_1=\eta_2=\eta$.
We obtain finitely many pieces
$\chi_{1,P_1}\chi_{2,P_2}g$, supported in
$P_1\times P_2$, with
\[
    \ell(P_i)=\eta\ell(Q_i),
    \qquad AP_i\subset Q_i^{**},
\]
and
\begin{equation}\label{eq:localized-pieces-sum}
    \sum_{P_1,P_2}
        \|\chi_{1,P_1}\chi_{2,P_2}g\|
            _{h^1_{\eta\ell(Q_1),\eta\ell(Q_2)}}
    \leq
    C_\eta\|g\|_{h^1_{\ell(Q_1),\ell(Q_2)}}.
\end{equation}
Apply Lemma~\ref{lem:localized-atomic-decomposition}
to every piece at scales
$\eta\ell(Q_1),\eta\ell(Q_2)$.
Its atoms have associated open sets and
rectangular components inside
\[
    AP_1\times AP_2
    \subset Q_1^{**}\times Q_2^{**}.
\]
Combining the finitely many decompositions
and using \eqref{eq:localized-pieces-sum} gives
\[
    g=\sum_\nu\lambda_\nu a_\nu,
    \qquad
    \sum_\nu|\lambda_\nu|
    \leq C_\eta\|g\|_{h^1_{\ell(Q_1),\ell(Q_2)}}.
\]
Initially, the series converges in
$h^1_{\eta\ell(Q_1),\eta\ell(Q_2)}$ and in $L^1$.

By \eqref{eq:local-scale-comparison-reverse}
and \eqref{eq:local-product-atom-bound},
\begin{align*}
    \|a_\nu\|_{h^1_{\ell(Q_1),\ell(Q_2)}}
    &\leq
    C\left(1+\log\frac1\eta\right)^2
    \|a_\nu\|_{h^1_{\eta\ell(Q_1),\eta\ell(Q_2)}}\leq
    C\left(1+\log\frac1\eta\right)^2.
\end{align*}
Since $\eta$ is fixed, this proves
\eqref{eq:localized-atom-original-scales}.
The series therefore converges absolutely
at the original Hardy scales as well.
Its limit is $g$ by
\eqref{eq:product-local-L1-embedding}
and uniqueness of the $L^1$ limit.

If $g\in L^2$, each localized piece belongs
to $L^2$.
Choose its decomposition to converge in $L^2$
and combine the finitely many series.
This gives the required $L^2$ convergence.
\end{proof}

\section{Comparison with Schr\"odinger Hardy spaces}
\label{sec:comparison}

Throughout this section, we assume
\eqref{ass:F}, \eqref{ass:K}, and \eqref{ass:D}.
We prove that the product square-function norm
is equivalent, on the defining $L^2$ class,
to the sum of the localized classical Hardy norms.
We first establish the required one-parameter
estimates and then apply them successively
in the two variables.

The coverings and partitions of unity are
those of Definition~\ref{def:coverings} and
Subsection~\ref{subsec:partitions}.
The square functions are defined in
\eqref{eq:one-parameter-square-function} and
\eqref{eq:product-square-function}, and the
local Hardy norms are those of
\eqref{eq:B} and
\eqref{eq:local-product-Hardy-norm}.

\subsection{One-parameter atomic characterizations}
\label{subsec:one-parameter-characterizations}

We first work with a single operator
$L=-\Delta+V$ on $\mathbb R^n$ and suppress
the index $i$.
Write
\[
    M_Lg(x)=\sup_{t>0}|e^{-tL}g(x)|.
\]
We use the auxiliary atomic space from
\cite[pp.~39--40, equations~(1.2)--(1.3)]{DZ_Studia}.

\begin{definition}\label{def:one-parameter-covering-atoms}
An $H^1_{\mathcal Q}$-atom is a measurable
function $a$ of one of the following types:
\begin{enumerate}
\item
For some $Q\in\mathcal Q$ and an axis-parallel
cube $K\subset Q^{**}$,
\[
    a=0\quad\text{almost everywhere outside }K,
    \qquad
    \|a\|_\infty\leq|K|^{-1},
    \qquad
    \int_{\mathbb R^n}a(x)\,dx=0.
\]
\item
For some $Q\in\mathcal Q$,
\[
    a=|Q|^{-1}\mathbf1_Q.
\]
\end{enumerate}
The first type consists of classical
$(1,\infty)$-atoms; the second type has no
cancellation requirement.

The space $H^1_{\mathcal Q}$ consists of all
$g\in L^1$ admitting a representation
\[
    g=\sum_{\nu=1}^{\infty}\lambda_\nu a_\nu
    \quad\text{in }L^1,
    \qquad
    \sum_{\nu=1}^{\infty}|\lambda_\nu|<\infty,
\]
by these atoms. Its norm is
\[
    \|g\|_{H^1_{\mathcal Q}}
    =
    \inf\left\{
        \sum_{\nu=1}^{\infty}|\lambda_\nu|:
        g=\sum_{\nu=1}^{\infty}\lambda_\nu a_\nu
        \text{ in }L^1
    \right\}.
\]
\end{definition}

Every atom has $L^1$ norm at most one.
Thus each series with summable coefficients
converges absolutely in $L^1$, and
$
    \|g\|_1\leq\|g\|_{H^1_{\mathcal Q}}
$.

\begin{theorem}[Dziuba\'nski--Zienkiewicz]
\label{thm:DZ-maximal-atomic}
Under ~\eqref{ass:F}, ~\eqref{ass:K}, and ~\eqref{ass:D},
a function $g\in L^1$ belongs to $H^1_{\mathcal Q}$
if and only if $M_Lg\in L^1$.
Moreover,
\begin{equation}\label{eq:scalar-maximal-atomic}
    \|g\|_{H^1_{\mathcal Q}}
    \asymp\|M_Lg\|_1.
\end{equation}
\end{theorem}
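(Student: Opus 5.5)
This theorem is attributed to Dziuba\'nski and Zienkiewicz, so the plan is to read it off from \cite{DZ_Studia}. The work is in checking that our hypotheses and atoms match theirs. The statement in \cite{DZ_Studia} is formulated for the maximal function $\sup_{t>0}|e^{-tL}g|$, the covering $\mathcal Q$, and atoms as in \cite[(1.2)--(1.3)]{DZ_Studia}. Definition~\ref{def:one-parameter-covering-atoms} was written to reproduce those atoms. The first step is therefore a dictionary: match our conditions \eqref{ass:F}, \eqref{ass:K}, \eqref{ass:D} with the corresponding hypotheses of \cite{DZ_Studia}, including the enlargements $Q^{***}$, $Q^{****}$ and the exponents $\delta,\varepsilon$. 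Where our enlargements are larger than theirs, our assumptions are stronger, since suprema over bigger sets dominate. The normalizations of the two atom types agree with theirs up to fixed constants.

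For the direction $\|M_Lg\|_1\lesssim\|g\|_{H^1_{\mathcal Q}}$ it suffices, by sublinearity and monotone convergence, to bound $\|M_La\|_1\le C$ uniformly over atoms. The bound on $Q^{****}$ or on a fixed enlargement of $K$ uses heat domination \eqref{eq:heat-domination} together with the $L^2$, respectively $L^\infty$, bounds. Away from $Q^{****}$ I would split times into $t\le\ell(Q)^2$ and $t>\ell(Q)^2$.
\begin{itemize}
\item For small $t$, Gaussian off-diagonal decay of $h_t$ suffices, together with cancellation for type (1) atoms. For type (1) this is compared with $H_t$ through the perturbation formula $H_t-K_t=\int_0^t H_{t-s}VK_s\,ds$, which is controlled by \eqref{ass:K}.
\item For large $t$, one uses the mass decay \eqref{ass:D}: $\sum_j j^{-1-\varepsilon}<\infty$ applied to dyadic blocks $t\sim2^j\ell(Q)^2$. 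This controls $\int\sup_t k_t(x,y)\,dx$ for $y\in Q^{****}$. It is the step where \eqref{ass:D} is genuinely needed for the type (2) atoms, which lack cancellation.
\end{itemize}

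The converse, $\|g\|_{H^1_{\mathcal Q}}\lesssim\|M_Lg\|_1$, is the harder direction, and I would cite it rather than reprove it. The argument in \cite{DZ_Studia} runs as follows.
\begin{enumerate}
\item Write $g=\sum_Q\phi_Qg$.
\item Show via \eqref{ass:K} that $\sum_Q\|\sup_{t\le\ell(Q)^2}|H_t(\phi_Qg)|\|_1\lesssim\|M_Lg\|_1$. This rests on the estimate $\|\sup_{t\le\ell(Q)^2}|(H_t-K_t)(\mathbf 1_{Q^{*}}g)|\|_1\lesssim\|\mathbf1_{Q^*}g\|_1$ and on the commutator of $\phi_Q$ with $K_t$.
\item Use Lemma~\ref{lem:local-heat-characterization} and Goldberg's local atomic decomposition at scale $\ell(Q)$ to decompose $\phi_Qg$ into classical atoms supported in $Q^{**}$. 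Large local atoms without cancellation are replaced by multiples of $|Q|^{-1}\mathbf1_Q$ plus cancelling corrections, which is possible thanks to \eqref{ass:F}.
\end{enumerate}
I expect the main obstacle to be the exact correspondence of hypotheses, not new analysis. If some enlargement in \cite{DZ_Studia} exceeds ours, I would first try uniformly subdividing the covering cubes, as remarked in the introduction; \eqref{ass:F} and \eqref{ass:K} survive such a subdivision with changed constants.
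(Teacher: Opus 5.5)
Your route is the paper's: Theorem~\ref{thm:DZ-maximal-atomic} is taken directly from \cite[Theorem~2.2]{DZ_Studia}, and the only real content is the dictionary of hypotheses. Your sketches of the two directions are not needed. The gap is that the dictionary you describe misses the one entry that is not automatic.

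Dziuba\'nski and Zienkiewicz normalize by $\operatorname{diam}(Q)$, not $\ell(Q)$. Their version of \eqref{ass:K} must therefore hold for all $0<t\le\operatorname{diam}(Q)^2=n\ell(Q)^2$, with right-hand side $C(t/\operatorname{diam}(Q)^2)^{\delta}$. Their \eqref{ass:D} is evaluated at the times $2^jn\ell(Q)^2$. Your principle that larger enlargements give stronger hypotheses handles the spatial sets: their \eqref{ass:D} is imposed only for $y\in Q^*=(1+\beta)Q$, and $\beta=1$ reproduces our enlargements. For \eqref{ass:K}, however, the time range goes the wrong way: our assumption says nothing directly about $\ell(Q)^2<t\le n\ell(Q)^2$.

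The missing argument is short. With $W=\mathbf1_{Q^{***}}V$, \eqref{ass:K} at $t=\ell(Q)^2$ gives $\|\int_0^{2\ell(Q)^2}H_sW\,ds\|_\infty\le C_K$. The semigroup property gives
\[
\int_{2m\ell(Q)^2}^{2(m+1)\ell(Q)^2}H_sW\,ds
=H_{2m\ell(Q)^2}\int_0^{2\ell(Q)^2}H_sW\,ds ,
\]
and this is bounded by $C_K$ in $L^\infty$ by contractivity. Summing $n$ such blocks bounds the integral up to $2t$ by $nC_K$ on a range where $t/\operatorname{diam}(Q)^2\ge 1/n$. For $t\le\ell(Q)^2$, the change of normalization costs only a factor $n^{\delta}$.

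For \eqref{ass:D} the later times are harmless, but only because $\int k_t(x,y)\,dx=(e^{-tL}\mathbf1)(y)$ is nonincreasing in $t$ for a symmetric sub-Markovian semigroup. This also has to be stated; it is not a consequence of comparing enlargements.

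Your fallback of uniformly subdividing the covering does not repair this cleanly. It was aimed at enlargements. Even if you used it to push $\operatorname{diam}(Q')^2$ below $\ell(Q)^2$, it replaces $\mathcal Q$ by a finer covering and so changes the atomic space $H^1_{\mathcal Q}$ in the statement. You would then owe a separate comparison $H^1_{\mathcal Q'}=H^1_{\mathcal Q}$ with equivalent norms. The direct semigroup argument above avoids this and keeps the covering fixed.
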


This is \cite[Theorem~2.2, equation~(2.3)]{DZ_Studia}.
We record the changes of normalization needed
to apply that result.
The enlargement there is $Q^*=(1+\beta)Q$;
taking $\beta=1$ gives
\eqref{eq:cube-enlargements}.
Their condition ~\eqref{ass:D} is imposed for
$y\in Q^*$, whereas ours holds on $Q^{****}$.

The authors use $\operatorname{diam}(Q)$
instead of $\ell(Q)$.
Since
$\operatorname{diam}(Q)^2=n\ell(Q)^2$,
our condition ~\eqref{ass:D} implies theirs by
monotonicity of the semigroup mass in time.
Indeed, symmetry and the semigroup property give
\[
    \int_{\mathbb R^n}k_t(x,y)\,dx
    =
    (e^{-tL}\mathbf1)(y),
\]
and this quantity is nonincreasing in $t$
because the semigroup is sub-Markovian.

For condition ~\eqref{ass:K}, put
$W=\mathbf1_{Q^{***}}V$.
Our assumption gives
\[
    \left\|
        \int_0^{2\ell(Q)^2}H_sW\,ds
    \right\|_\infty
    \leq C_K.
\]
For every integer $m\geq0$, the semigroup
property and $L^\infty$ contractivity imply
\[
    \left\|
        \int_{2m\ell(Q)^2}^{2(m+1)\ell(Q)^2}
            H_sW\,ds
    \right\|_\infty
    \leq C_K.
\]
Hence the integral up to $2t$ is bounded by
$nC_K$ for $\ell(Q)^2<t\leq n\ell(Q)^2$.
On this interval,
$t/\operatorname{diam}(Q)^2\geq1/n$.
For $t\leq\ell(Q)^2$, the change from side
length to diameter only multiplies the
constant by $n^\delta$.
Thus ~\eqref{ass:K} also holds in the
normalization of \cite{DZ_Studia}.

The Gaussian bound \eqref{eq:heat-domination}
allows us to use the maximal characterization
of Song and Yan.
More precisely, \cite[Theorem~1.3]{SongYan2018},
together with part~\textup{(ii)} of the concluding
Remarks in that paper, gives the following
square-function comparison.

\begin{lemma}\label{lem:scalar-square-maximal}
For every $g\in L^2(\mathbb R^n)$,
\[
    S_Lg\in L^1
    \quad\Longleftrightarrow\quad
    M_Lg\in L^1,
\]
and
\begin{equation}\label{eq:scalar-square-maximal}
    \|S_Lg\|_1\asymp\|M_Lg\|_1.
\end{equation}
\end{lemma}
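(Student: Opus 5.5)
The lemma is essentially a citation: the plan is to read it off from \cite[Theorem~1.3]{SongYan2018} and then verify that its hypotheses hold for $L=-\Delta+V$. Song and Yan consider a nonnegative self-adjoint operator on $L^2(\mathbb R^n)$ whose heat kernel has Gaussian upper bounds. They define the Hardy space $H^1_L$ as the completion of $\{g\in L^2: S_Lg\in L^1\}$ in the norm $\|S_Lg\|_1$. They prove that it coincides with the completion of $\{g\in L^2: M_Lg\in L^1\}$ in the norm $\|M_Lg\|_1$, with equivalent norms.

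The Gaussian bound is \eqref{eq:heat-domination}: $0\le k_t(x,y)\le h_t(x-y)$. Self-adjointness and nonnegativity of $L$ come from the form construction in Subsection~\ref{subsec:operators}. So the abstract hypotheses are satisfied.

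The conclusion stated in the lemma is slightly sharper than ``the two completions coincide.'' It says that for each fixed $g\in L^2$, finiteness of one quantity is equivalent to finiteness of the other, with norm comparability. This is where part~(ii) of the concluding Remarks of \cite{SongYan2018} is needed. As I understand it, that remark records that the argument works on the $L^2$ level. The proof of Theorem~1.3 consists of two parts:
\begin{itemize}
\item a molecular decomposition of any $g\in L^2$ with $S_Lg\in L^1$ (via tent spaces), converging in $L^2$, with uniform bounds $\|M_Lm\|_1\le C$ for molecules $m$, which gives $\|M_Lg\|_1\lesssim\|S_Lg\|_1$;
\item the converse direction, proved through a maximal-function atomic decomposition of $L^2$ functions with $M_Lg\in L^1$ (a Calder\'on--Zygmund-type argument adapted to $L$), with $\|S_La\|_1\le C$ for atoms $a$.
\end{itemize}
In both directions, the decomposition of an $L^2$ function converges in $L^2$. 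The sublinear operators $S_L$ and $M_L$ are $L^2$-bounded, which lets us pass the bounds to $g$ itself, either by Fatou's lemma or by pointwise a.e.\ convergence along a subsequence. This yields \eqref{eq:scalar-square-maximal} for every $g\in L^2$ for which either side is finite. The case where both sides are infinite is trivial.

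The only real obstacle is bookkeeping. The square function in \cite{SongYan2018} may be normalized with a different cone aperture, or with $t\partial_t e^{-tL}$ in place of $t^2Le^{-t^2L}$. In the latter case, the substitution $t\mapsto t^2$ gives the same object up to constants. Change of aperture yields equivalent $L^1$ norms by the standard tent-space aperture lemma. Likewise, their maximal function may be the nontangential one. The nontangential and vertical maximal functions are comparable in $L^1$ for semigroups with Gaussian bounds, and this comparability is among the characterizations listed in that same theorem. I expect nothing beyond verifying these normalizations.
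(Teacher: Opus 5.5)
Your proposal follows the paper's route. Both obtain the lemma by citing \cite[Theorem~1.3]{SongYan2018} together with part~(ii) of its concluding Remarks, with the Gaussian hypothesis supplied by \eqref{eq:heat-domination}; your point that the decompositions must also converge in $L^2$ to get the statement for each fixed $g\in L^2$ is sound. The paper's bookkeeping is only that Song--Yan's measure $dy\,dt/(t|B(y,t)|)$ equals $|B(0,1)|^{-1}\,dy\,dt/t^{n+1}$, and that their maximal function is already the radial one, $\sup_{t>0}|e^{-t^2L}g|=M_Lg$, so your hedges about cone aperture and nontangential maximal functions are not needed.
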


The square function in \cite{SongYan2018}
uses $dy\,dt/(t|B(y,t)|)$, which differs from
our measure only by the constant $|B(0,1)|^{-1}$.
Their radial maximal function uses $e^{-t^2L}$;
its supremum over $t>0$ is exactly $M_L$.

\begin{corollary}\label{cor:scalar-square-atomic}
For every $g\in L^2$,
\[
    S_Lg\in L^1
    \quad\Longleftrightarrow\quad
    g\in H^1_{\mathcal Q}.
\]
In this case,
\begin{equation}\label{eq:scalar-square-atomic}
    \|g\|_{H^1_{\mathcal Q}}
    \asymp\|S_Lg\|_1.
\end{equation}
\end{corollary}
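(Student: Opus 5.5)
The corollary follows by chaining Lemma~\ref{lem:scalar-square-maximal} with Theorem~\ref{thm:DZ-maximal-atomic}. Fix $g\in L^2(\mathbb R^n)$.

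For the forward direction, suppose $S_Lg\in L^1$. Lemma~\ref{lem:scalar-square-maximal} gives $M_Lg\in L^1$ with $\|M_Lg\|_1\lesssim\|S_Lg\|_1$. To apply Theorem~\ref{thm:DZ-maximal-atomic}, we also need $g\in L^1$, because that theorem is stated only for $L^1$ functions. We obtain this from the pointwise bound
\[
    |g|\leq M_Lg \quad\text{almost everywhere}.
\]
Since $g\in L^2$, we have $e^{-tL}g\to g$ in $L^2$ as $t\downarrow0$, by strong continuity of the semigroup. Along a sequence $t_m\downarrow0$ the convergence is almost everywhere, which yields the bound. Hence $\|g\|_1\leq\|M_Lg\|_1<\infty$. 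Theorem~\ref{thm:DZ-maximal-atomic} then gives $g\in H^1_{\mathcal Q}$ and
\[
    \|g\|_{H^1_{\mathcal Q}}\lesssim\|M_Lg\|_1\lesssim\|S_Lg\|_1 .
\]

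For the reverse direction, suppose $g\in L^2\cap H^1_{\mathcal Q}$. By definition $g\in L^1$, so Theorem~\ref{thm:DZ-maximal-atomic} applies and gives $\|M_Lg\|_1\lesssim\|g\|_{H^1_{\mathcal Q}}<\infty$. Since $g\in L^2$, Lemma~\ref{lem:scalar-square-maximal} then gives $S_Lg\in L^1$ and $\|S_Lg\|_1\lesssim\|M_Lg\|_1$. Combining the two directions proves \eqref{eq:scalar-square-atomic}.

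The only point that is not immediate is the $L^1$ membership in the forward direction, which the almost-everywhere domination $|g|\leq M_Lg$ supplies. The normalization changes needed to invoke \cite{DZ_Studia} and \cite{SongYan2018} were already checked before the statement.
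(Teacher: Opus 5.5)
Your proposal is correct and follows essentially the same route as the paper. You combine Lemma~\ref{lem:scalar-square-maximal} with Theorem~\ref{thm:DZ-maximal-atomic}, and in the forward direction you obtain $g\in L^1$ from the almost-everywhere bound $|g|\leq M_Lg$, which comes from strong continuity along a sequence $t_j\downarrow0$.
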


\begin{proof}
If $S_Lg\in L^1$, then
Lemma~\ref{lem:scalar-square-maximal} gives
$M_Lg\in L^1$.
Strong continuity of the semigroup on $L^2$
provides a sequence $t_j\downarrow0$ such that
$e^{-t_jL}g\to g$ almost everywhere.
Thus $|g|\leq M_Lg$ almost everywhere, so
$g\in L^1$ and
Theorem~\ref{thm:DZ-maximal-atomic} applies.
Combining \eqref{eq:scalar-maximal-atomic}
and \eqref{eq:scalar-square-maximal} proves
one implication and its norm estimate.
The reverse implication follows from the
same two estimates.
\end{proof}

\subsection{One-parameter localization estimates}
\label{subsec:one-parameter-comparison}

We next establish the geometric and local
Hardy space estimates used in the comparison.
The assumptions on the potential enter only
through Corollary~\ref{cor:scalar-square-atomic}.

\begin{lemma}\label{lem:neighbouring-cutoffs}
Assume ~\eqref{ass:F}.
For every $Q\in\mathcal Q$, there are uniformly
finitely many $P\in\mathcal Q$ satisfying
$P^*\cap Q^{**}\neq\varnothing$.
For each such pair,
\begin{equation}\label{eq:neighbouring-cube-scales}
    C_F^{-1}\ell(Q)\leq\ell(P)\leq C_F\ell(Q),
\end{equation}
and
\begin{equation}\label{eq:neighbouring-cutoff-estimate}
    \|\phi_Pu\|_{h^1_{\ell(P)}}
    \leq C\|u\|_{h^1_{\ell(Q)}},
    \qquad u\in h^1_{\ell(Q)}.
\end{equation}
The constants are independent of $P$, $Q$, and $u$.
\end{lemma}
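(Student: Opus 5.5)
The proof splits into three parts: a geometric step giving the finiteness and the scale comparison, an application of the cutoff estimate at scale $\ell(P)$, and a change of scale back to $\ell(Q)$.

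\textbf{Finiteness and scale comparison.} Since $P^*=2P\subset P^{****}$ and $Q^{**}=4Q\subset Q^{****}$, the condition $P^*\cap Q^{**}\neq\varnothing$ gives $P^{****}\cap Q^{****}\neq\varnothing$. Condition \eqref{ass:F} then yields \eqref{eq:neighbouring-cube-scales} directly. For finiteness, every such $P$ satisfies $\ell(P)\geq C_F^{-1}\ell(Q)$. It is also contained in a fixed enlargement of $Q$: its centre lies within $\bigl(2\ell(Q)+\ell(P)\bigr)$ of the centre of $Q$ in each coordinate, and $\ell(P)\leq C_F\ell(Q)$, so $P\subset c\,Q$ with $c$ depending only on $C_F$. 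The cubes of $\mathcal Q$ have pairwise disjoint interiors, so a volume count gives at most $c^n\ell(Q)^n/(C_F^{-1}\ell(Q))^n=(cC_F)^n$ such $P$. Alternatively, one can appeal to \eqref{eq:covering-overlap} at any point of $Q^{**}$, but the volume count is cleaner.

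\textbf{Cutoff estimate at scale $\ell(P)$.} Apply the one-parameter cutoff estimate \eqref{eq:one-parameter-cutoff} with $r=\ell(P)$ and $\chi=\phi_P$. By \eqref{eq:partition-of-unity} and \eqref{eq:partition-derivative-bounds}, $\|\phi_P\|_\infty+\ell(P)\|\nabla\phi_P\|_\infty\leq C$, so
\[
    \|\phi_Pu\|_{h^1_{\ell(P)}}\leq C\|u\|_{h^1_{\ell(P)}}.
\]
Strictly, \eqref{eq:one-parameter-cutoff} was derived for $\chi\in C^\infty_c$ with $u$ in the relevant space. The commutator argument only uses $\chi$ bounded and Lipschitz together with $u\in L^1$, and $\phi_P$ is compactly supported anyway, so this is harmless.

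\textbf{Change of scale from $\ell(P)$ to $\ell(Q)$.} This is the one place requiring care, since the comparison is not monotone in general.
\begin{enumerate}
\item If $\ell(P)\leq\ell(Q)$, the first inequality in \eqref{eq:one-parameter-scale} gives $\|u\|_{h^1_{\ell(P)}}\leq C\|u\|_{h^1_{\ell(Q)}}$ with an absolute constant.
\item If $\ell(P)>\ell(Q)$, use the second inequality in \eqref{eq:one-parameter-scale} with $a=\ell(Q)$ and $r=\ell(P)$. The loss is the factor $1+\log(\ell(P)/\ell(Q))\leq 1+\log C_F$, which is uniform by \eqref{eq:neighbouring-cube-scales}.
\end{enumerate}

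Chaining the cutoff estimate with the appropriate case of the scale change yields \eqref{eq:neighbouring-cutoff-estimate}. The resulting constant depends only on $n$, $C_F$, the fixed kernels and the partition constants.

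I expect no step to be a genuine obstacle. The only point needing attention is that the reverse scale comparison carries a logarithmic factor, and this is exactly where the neighbour comparability \eqref{ass:F} is used to keep the constant uniform.
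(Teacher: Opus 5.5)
Your proof is correct and follows the paper's argument essentially step for step: $P^*\cap Q^{**}\neq\varnothing$ forces $P^{****}\cap Q^{****}\neq\varnothing$, a volume count bounds the number of neighbours, and the cutoff estimate at scale $\ell(P)$ is chained with the one-parameter change of scale, where the logarithmic loss is at most $1+\log C_F$. The one thing to drop is your aside about applying the bounded-overlap inequality at a single point of $Q^{**}$: when $C_F$ is large, small neighbours near opposite corners of $Q^{**}$ need not share any point of their $16$-fold enlargements, so that alternative needs an integrated form of the overlap bound, whereas the volume count you actually give is exactly what the paper uses.
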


\begin{proof}
The intersection condition implies
$P^{****}\cap Q^{****}\neq\varnothing$,
so \eqref{ass:F} gives
\eqref{eq:neighbouring-cube-scales}.
All such cubes $P$ lie in a fixed concentric
enlargement of $Q$, have pairwise disjoint
interiors, and satisfy
$|P|\geq C_F^{-n}|Q|$.
Comparison of volumes bounds their number.

By \eqref{eq:one-parameter-scale},
\[
    \|u\|_{h^1_{\ell(P)}}
    \leq C\|u\|_{h^1_{\ell(Q)}};
\]
when $\ell(P)>\ell(Q)$, the logarithmic factor
is bounded by $1+\log C_F$.
Moreover,
\eqref{eq:partition-of-unity} and
\eqref{eq:partition-derivative-bounds} imply
\[
    \|\phi_P\|_\infty
    +\ell(P)\|\nabla\phi_P\|_\infty\leq C.
\]
Applying \eqref{eq:one-parameter-cutoff}
at scale $\ell(P)$ proves
\eqref{eq:neighbouring-cutoff-estimate}.
\end{proof}

\begin{lemma}\label{lem:local-to-covering-atoms}
If $P\in\mathcal Q$ and $u\in h^1_{\ell(P)}$
vanishes almost everywhere outside $P^*$, then
\begin{equation}\label{eq:local-to-covering-atoms}
    \|u\|_{H^1_{\mathcal Q}}
    \leq C\|u\|_{h^1_{\ell(P)}}.
\end{equation}
Under ~\eqref{ass:F}, the more general estimate
\begin{equation}\label{eq:supported-local-to-covering}
    \|u\|_{H^1_{\mathcal Q}}
    \leq C\|u\|_{h^1_{\ell(Q)}}
\end{equation}
holds whenever $Q\in\mathcal Q$ and
$u\in h^1_{\ell(Q)}$ vanishes almost everywhere
outside $Q^{**}$.
The constants are independent of the cubes and $u$.
\end{lemma}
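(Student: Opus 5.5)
For \eqref{eq:local-to-covering-atoms} I would decompose $u$ into classical Goldberg-type atoms at scale $\ell(P)$ whose supports stay inside $P^{**}$. Each such atom is then rewritten as a combination of $H^1_{\mathcal Q}$-atoms. Here is the concrete route.

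First, rescale to $\ell(P)=1$. Since $u$ is supported in $P^*$, the one-parameter version of Lemma~\ref{lem:localized-atomic-decomposition} applies (equivalently, take the product construction with a trivial second factor, or run Goldberg's decomposition with support control). It writes
\[
u=\sum_\nu\lambda_\nu b_\nu,\qquad \sum_\nu|\lambda_\nu|\leq C\|u\|_{h^1_{\ell(P)}},
\]
where each $b_\nu$ is a local $(1,2)$-atom supported in a cube $K_\nu\subset AP^*$. Cancellation holds whenever $\ell(K_\nu)<\rho\,\ell(P)$; otherwise there is only the size bound, and $\ell(K_\nu)\lesssim\ell(P)$. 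To keep the supports inside $P^{**}$, I would first cut $u$ by a finite smooth partition at scale $\eta\ell(P)$, exactly as in Lemma~\ref{lem:finite-localization} and Corollary~\ref{cor:atoms-inside-enlargements}. This costs only a constant depending on $\eta$.

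Next, convert each $b_\nu$ into $H^1_{\mathcal Q}$-atoms.

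\textbf{Atoms with cancellation.} A cancellative $(1,2)$-atom supported in a cube $K\subset P^{**}$ is a finite sum of multiples of $(1,\infty)$-atoms up to a uniform constant. This follows from the classical fact that $(1,2)$-atoms lie in $H^1(\mathbb R^n)$ with bounded norm. The Calderón--Zygmund/Whitney atomic decomposition of such an atom (the one-parameter analogue of Steps~5--8 of the product construction) produces $(1,\infty)$-atoms supported in a fixed enlargement of $K$, which lies in $P^{***}$. Shrinking $\eta$ keeps them inside $P^{**}$. These are type~(1) atoms.

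\textbf{Atoms without cancellation.} Such an atom $b$ lives on a cube $K$ of side comparable to $\ell(P)$ inside $P^{**}$. Cover $P^{**}$ by the uniformly finitely many $Q'\in\mathcal Q$ with $Q'\cap P^{**}\neq\varnothing$; by \eqref{ass:F} these have $\ell(Q')\asymp\ell(P)$. Split $b=\sum_{Q'}b\mathbf 1_{Q'}$. For each piece,
\[
b\mathbf 1_{Q'}=\Big(b\mathbf 1_{Q'}-c_{Q'}|Q'|^{-1}\mathbf 1_{Q'}\Big)+c_{Q'}|Q'|^{-1}\mathbf 1_{Q'},\qquad c_{Q'}=\int_{Q'}b.
\]
The second term is a type~(2) atom with coefficient $|c_{Q'}|\leq\|b\|_1\lesssim1$. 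The first term has mean zero, is supported in $Q'\subset Q'^{**}$, and has $L^2$ norm $\lesssim|Q'|^{-1/2}$. By the same $(1,2)\to(1,\infty)$ conversion, it is a bounded combination of type~(1) atoms attached to $Q'$. I expect this conversion to be the main obstacle, because the resulting atoms must remain inside $Q'^{**}$. I would handle it by using supports inside $Q'$, so that the enlargement factor of the Whitney decomposition fits within $Q'^{**}=4Q'$. If that fails, I would instead pass through Theorem~\ref{thm:DZ-maximal-atomic}: bound $\|M_L(\cdot)\|_1$ directly by \eqref{eq:heat-domination}, using the classical maximal function for the cancellative part and the trivial bound $\|M_L h\|_1\leq\|\sup_t H_t|h|\|_1$ for the local pieces.

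Summing these bounds yields \eqref{eq:local-to-covering-atoms}.

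For \eqref{eq:supported-local-to-covering}, write $u=\sum_P\phi_Pu$ over the finitely many $P$ with $P^*\cap Q^{**}\neq\varnothing$. Each $\phi_Pu$ is supported in $P^*$. Lemma~\ref{lem:neighbouring-cutoffs} gives $\|\phi_Pu\|_{h^1_{\ell(P)}}\leq C\|u\|_{h^1_{\ell(Q)}}$, and applying \eqref{eq:local-to-covering-atoms} to each piece and summing the uniformly finite family finishes the proof.
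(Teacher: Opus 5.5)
Your reduction of \eqref{eq:supported-local-to-covering} to \eqref{eq:local-to-covering-atoms} is exactly the paper's argument. Your plan for \eqref{eq:local-to-covering-atoms} is a genuinely different route: cut at scale $\eta\ell(P)$, decompose locally with support control, then convert into $H^1_{\mathcal Q}$-atoms. It can be completed, but the conversion step, which you rightly flag as the main obstacle, is not closed by what you wrote.

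The one-parameter analogue of Steps~5--8 of Lemma~\ref{lem:localized-atomic-decomposition} yields only $L^2$-normalized atoms, since all bounds there come from the synthesis estimate \eqref{eq:localized-synthesis-estimate}. So it cannot produce $(1,\infty)$-atoms. The boundedness of $(1,2)$-atoms in $H^1(\mathbb R^n)$ gives a decomposition into countably many (not finitely many) $(1,\infty)$-atoms, but with no support information.

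For the cancellative atoms, a fixed enlargement could be absorbed by shrinking $\eta$. The remainders $b\mathbf 1_{Q'}-c_{Q'}|Q'|^{-1}\mathbf 1_{Q'}$, however, live at the full scale $\ell(Q')$, so shrinking $\eta$ does not help. You give no argument that the enlargement factor of your Calder\'on--Zygmund/Whitney construction is at most $4$.

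The fallback fails outright. For $h\neq0$ with bounded support, $\sup_{t>0}H_t|h|(x)\gtrsim\|h\|_1|x|^{-n}$ for large $|x|$, so $\|\sup_tH_t|h|\|_1=\infty$. Moreover, the domination $|e^{-tL}h|\leq H_t|h|$ discards cancellation, so $M_Lh$ is not controlled by the classical maximal function of $h$. Bounding $\|M_L\,\cdot\,\|_1$ on such pieces is precisely where \eqref{ass:K} and \eqref{ass:D} enter, through Theorem~\ref{thm:DZ-maximal-atomic}.

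What closes the gap is a conversion with no enlargement. Suppose $h\in L^2$ vanishes outside a cube $K_0$ and $\int h=0$. Run a dyadic Calder\'on--Zygmund stopping-time argument on the dyadic subcubes of $K_0$, starting at levels $2^k\geq|K_0|^{-1}\|h\|_1$ and using differences of consecutive good parts (each has mean zero on every selected cube). This writes $h$ as an $L^1$-convergent combination of $(1,\infty)$-atoms supported in subcubes of $K_0$, with coefficient sum at most $C|K_0|^{1/2}\|h\|_2$.

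With this tool you can drop the splitting over the neighbours $Q'$, and with it the use of \eqref{ass:F}. Simply subtract $(\int b)|P|^{-1}\mathbf 1_P$ from a non-cancellative atom $b$ and decompose the remainder inside $K_0=P^{**}$.

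The paper avoids any conversion. It applies \cite[Theorem~3.2]{DZ_Studia} to a uniform partition at scale $\ell(P)$. Together with \eqref{eq:local-heat-characterization}, this already gives classical $(1,\infty)$-atoms supported in $P^{**}$ and normalized indicators $|T|^{-1}\mathbf 1_T$ with $T\subset P^{**}$ and $|T|=|P|$. It then notes that $|T|^{-1}\mathbf 1_T-|P|^{-1}\mathbf 1_P$ is a fixed multiple of an $H^1_{\mathcal Q}$-atom on $P^{**}$.

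Your repaired route is more self-contained, relying only on the paper's square-function machinery and an elementary stopping-time argument. It is longer, though, and requires writing out the one-parameter version of Lemma~\ref{lem:localized-atomic-decomposition}, which the paper never states.
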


\begin{proof}
Apply \cite[Theorem~3.2]{DZ_Studia} to a uniform
partition into cubes of side length $\ell(P)$
containing $P$.
Together with \eqref{eq:local-heat-characterization},
this gives a decomposition of $u$ into classical
atoms supported in $P^{**}$ and normalized
indicators $|T|^{-1}\mathbf1_T$, where
$T\subset P^{**}$ and $|T|=|P|$.
The coefficient sum is bounded by
$C\|u\|_{h^1_{\ell(P)}}$.

The classical atoms are already
$H^1_{\mathcal Q}$-atoms.
For each remaining term, write
\[
    \frac{\mathbf1_T}{|T|}
    =
    \left(
        \frac{\mathbf1_T}{|T|}
        -
        \frac{\mathbf1_P}{|P|}
    \right)
    +
    \frac{\mathbf1_P}{|P|}.
\]
The difference has integral zero, is supported
in $P^{**}$, and has $L^\infty$ norm at most
$2|P|^{-1}=2\cdot4^n|P^{**}|^{-1}$.
It is therefore a fixed multiple of an
$H^1_{\mathcal Q}$-atom.
The last term is itself an atom.
This proves \eqref{eq:local-to-covering-atoms}.

If $u$ is supported in $Q^{**}$, then
\[
    u=
    \sum_{{P\in\mathcal Q\,
                    P^*\cap Q^{**}\neq\varnothing}}
        \phi_Pu.
\]
By Lemma~\ref{lem:neighbouring-cutoffs}, this
sum has uniformly finitely many terms.
Applying \eqref{eq:local-to-covering-atoms}
and \eqref{eq:neighbouring-cutoff-estimate}
to each term proves
\eqref{eq:supported-local-to-covering}.
\end{proof}

\begin{lemma}\label{lem:covering-atomic-localization}
Assume ~\eqref{ass:F}.
Every $H^1_{\mathcal Q}$-atom associated with
$Q\in\mathcal Q$ satisfies
\begin{equation}\label{eq:covering-atom-local-bound}
    \|a\|_{h^1_{\ell(Q)}}\leq C.
\end{equation}
Moreover,
\begin{equation}\label{eq:covering-atomic-localization}
    \sum_{P\in\mathcal Q}
        \|\phi_Pg\|_{h^1_{\ell(P)}}
    \leq C\|g\|_{H^1_{\mathcal Q}},
    \qquad g\in H^1_{\mathcal Q}.
\end{equation}
\end{lemma}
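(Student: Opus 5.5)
The plan is to prove \eqref{eq:covering-atom-local-bound} first and then deduce \eqref{eq:covering-atomic-localization} from it by summing over atoms.

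\textbf{Atoms of the second type.} Let $a=|Q|^{-1}\mathbf1_Q$. Rescaling by $\ell(Q)$, as in the proof of Lemma~\ref{lem:local-heat-characterization}, reduces the claim to $Q$ a unit cube. We then use the heat characterization \eqref{eq:local-heat-characterization} with $r=1$. The function $\sup_{0<t\leq1}|H_t\mathbf1_Q|$ is bounded by $1$ on $3Q$. Away from $3Q$ it is bounded by $C\sup_{t\leq1}t^{-n/2}e^{-c\,\mathrm{dist}(x,Q)^2/t}\lesssim e^{-c\,\mathrm{dist}(x,Q)^2}$, which is integrable. Hence the $L^1$ norm of the local maximal function is uniformly bounded.

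\textbf{Atoms of the first type.} Here $a$ is supported in a cube $K\subset Q^{**}$, satisfies $\|a\|_\infty\leq|K|^{-1}$, and has $\int a=0$. Such an $a$ is a classical $H^1$ atom, so its global heat maximal function has $L^1$ norm bounded by a constant. The local maximal function is dominated by the global one. By \eqref{eq:local-heat-characterization} we get $\|a\|_{h^1_{\ell(Q)}}\leq C$ for every scale. Note that this step uses neither the size of $K$ nor its position.

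\textbf{The localization estimate.} Let $g=\sum_\nu\lambda_\nu a_\nu$ with each $a_\nu$ associated with a cube $Q_\nu$. We will prove
\[
\sum_{P\in\mathcal Q}\|\phi_Pa\|_{h^1_{\ell(P)}}\leq C
\]
for every atom $a$. Given this, the triangle inequality gives $\sum_P\|\phi_Pg\|_{h^1_{\ell(P)}}\leq C\sum_\nu|\lambda_\nu|$, and taking the infimum over representations yields \eqref{eq:covering-atomic-localization}. To pass the norm through the series, the partial sums of $\phi_Pg$ converge in $L^1$, so Lemma~\ref{lem:local-Hardy-Fatou} (one-parameter version) lets us take the limit.

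For a fixed atom $a$ associated with $Q$, the function $\phi_Pa$ is nonzero only if $P^*\cap Q^{**}\neq\varnothing$. By Lemma~\ref{lem:neighbouring-cutoffs} there are uniformly finitely many such $P$. For each of them, \eqref{eq:neighbouring-cutoff-estimate} combined with \eqref{eq:covering-atom-local-bound} gives $\|\phi_Pa\|_{h^1_{\ell(P)}}\leq C\|a\|_{h^1_{\ell(Q)}}\leq C$.

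The main obstacle I expect is making sure that all constants are uniform in the cubes and in $K$. The scale changes are handled by \eqref{eq:one-parameter-scale} together with the comparability \eqref{eq:neighbouring-cube-scales}. The first-type atoms should be checked by explicitly citing the classical $H^1$ bound for the global heat maximal function, which is independent of the size of $K$.
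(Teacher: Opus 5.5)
Your proposal is correct and follows essentially the same route as the paper: the classical-atom case uses domination of the local heat maximal function by the global one together with \eqref{eq:local-heat-characterization}, and the localization estimate combines Lemma~\ref{lem:neighbouring-cutoffs} with the atom bound, the triangle inequality, and Lemma~\ref{lem:local-Hardy-Fatou}. The only difference is inessential. For $|Q|^{-1}\mathbf1_Q$ near the cube, you use the trivial bound $H_t\mathbf1_Q\leq1$ on $3Q$, whereas the paper uses Cauchy--Schwarz with the $L^2$ boundedness of the heat maximal operator on a fixed enlargement; both give a uniform constant.
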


\begin{proof}
For a classical atom, the global heat maximal
function has uniformly bounded $L^1$ norm.
Since
\[
    \sup_{0<t\leq\ell(Q)^2}|H_ta|
    \leq\sup_{t>0}|H_ta|,
\]
\eqref{eq:local-heat-characterization} gives
\eqref{eq:covering-atom-local-bound}.

For $a=|Q|^{-1}\mathbf1_Q$, translation and
dilation reduce the assertion to a unit cube.
On a fixed enlargement of that cube,
Cauchy--Schwarz and the $L^2$ boundedness of
the heat maximal operator give the $L^1$ bound.
Outside the enlargement, the Gaussian estimate
for $0<t\leq1$ gives an integrable bound.
This proves \eqref{eq:covering-atom-local-bound}
for both types of atoms.

Since $a$ is supported in $Q^{**}$, only cubes
$P$ with $P^*\cap Q^{**}\neq\varnothing$ can
contribute to $\phi_Pa$.
By Lemma~\ref{lem:neighbouring-cutoffs} and
\eqref{eq:covering-atom-local-bound},
\[
    \sum_{P\in\mathcal Q}
        \|\phi_Pa\|_{h^1_{\ell(P)}}
    \leq C.
\]

Now choose an atomic representation
$g=\sum_\nu\lambda_\nu a_\nu$ in $L^1$,
and let $g_N=\sum_{\nu=1}^N\lambda_\nu a_\nu$.
The triangle inequality gives
\[
    \sum_{P\in\mathcal Q}
        \|\phi_Pg_N\|_{h^1_{\ell(P)}}
    \leq C\sum_{\nu=1}^N|\lambda_\nu|.
\]
For each $P$, boundedness of $\phi_P$ implies
$\phi_Pg_N\to\phi_Pg$ in $L^1$.
The one-parameter assertion of
Lemma~\ref{lem:local-Hardy-Fatou}, followed
by Fatou's lemma for the sum over $P$, yields
\[
    \sum_{P\in\mathcal Q}
        \|\phi_Pg\|_{h^1_{\ell(P)}}
    \leq
    \liminf_{N\to\infty}
        \sum_{P\in\mathcal Q}
            \|\phi_Pg_N\|_{h^1_{\ell(P)}}
    \leq C\sum_\nu|\lambda_\nu|.
\]
Taking the infimum over representations proves
\eqref{eq:covering-atomic-localization}.
\end{proof}

\begin{proposition}\label{prop:one-parameter-comparison}
Assume \eqref{ass:F}, \eqref{ass:K}, and
\eqref{ass:D}.

For every $g\in L^2$ with $S_Lg\in L^1$,
there is an atomic decomposition
\[
    g=\sum_{\nu=1}^{\infty}\lambda_\nu a_\nu
    \quad\text{in }L^1,
    \qquad
    \sum_{\nu=1}^{\infty}|\lambda_\nu|
    \leq C\|S_Lg\|_1,
\]
where the $a_\nu$ are
$H^1_{\mathcal Q}$-atoms from
Definition~\ref{def:one-parameter-covering-atoms}.
Moreover,
\begin{equation}\label{eq:one-parameter-localization-proved}
    \sum_{Q\in\mathcal Q}
        \|\phi_Qg\|_{h^1_{\ell(Q)}}
    \leq C\|S_Lg\|_1.
\end{equation}

If $Q\in\mathcal Q$ and
$g\in L^2\cap h^1_{\ell(Q)}$ vanishes almost
everywhere outside $Q^{**}$, then $g$ admits
an $L^1$-convergent decomposition into
$H^1_{\mathcal Q}$-atoms with
\[
    \sum_{\nu=1}^{\infty}|\lambda_\nu|
    \leq C\|g\|_{h^1_{\ell(Q)}}.
\]
In particular,
\begin{equation}\label{eq:one-parameter-supported-proved}
    \|S_Lg\|_1
    \leq C\|g\|_{h^1_{\ell(Q)}}.
\end{equation}
All constants are independent of $Q$ and $g$.
\end{proposition}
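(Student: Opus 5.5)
The plan is to obtain all three assertions from Corollary~\ref{cor:scalar-square-atomic} combined with the local estimates of Lemmas~\ref{lem:local-to-covering-atoms} and~\ref{lem:covering-atomic-localization}.

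\textbf{First assertion.} Let $g\in L^2$ with $S_Lg\in L^1$. Corollary~\ref{cor:scalar-square-atomic} gives $g\in H^1_{\mathcal Q}$ and $\|g\|_{H^1_{\mathcal Q}}\leq C\|S_Lg\|_1$. By the definition of the infimum, there is an $L^1$-convergent representation by $H^1_{\mathcal Q}$-atoms whose coefficient sum is at most $2\|g\|_{H^1_{\mathcal Q}}$. This proves the decomposition claim.

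\textbf{Localization estimate.} Estimate \eqref{eq:covering-atomic-localization} of Lemma~\ref{lem:covering-atomic-localization} gives
$\sum_{Q}\|\phi_Qg\|_{h^1_{\ell(Q)}}\leq C\|g\|_{H^1_{\mathcal Q}}\leq C\|S_Lg\|_1$, which is \eqref{eq:one-parameter-localization-proved}.

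\textbf{Supported functions.} Let $g\in L^2\cap h^1_{\ell(Q)}$ vanish outside $Q^{**}$.
\begin{enumerate}
\item Estimate \eqref{eq:supported-local-to-covering} of Lemma~\ref{lem:local-to-covering-atoms} gives $\|g\|_{H^1_{\mathcal Q}}\leq C\|g\|_{h^1_{\ell(Q)}}$. Choosing a near-optimal representation yields the stated atomic decomposition with coefficient sum at most $C\|g\|_{h^1_{\ell(Q)}}$.
\item For \eqref{eq:one-parameter-supported-proved}, I would use the reverse direction of Corollary~\ref{cor:scalar-square-atomic}. Since $g\in L^2$ and $g\in H^1_{\mathcal Q}$, the corollary gives $S_Lg\in L^1$ with $\|S_Lg\|_1\leq C\|g\|_{H^1_{\mathcal Q}}$.
\item Combining the two bounds proves the estimate.
\end{enumerate}

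\textbf{Main point to check.} This step relies on the corollary being a genuine equivalence for all $g\in L^2$. One must not argue atom by atom, since an $L^1$-convergent atomic series need not converge in a sense controlling $S_L$. That equivalence holds because Theorem~\ref{thm:DZ-maximal-atomic} gives $M_Lg\in L^1$ for every $g\in H^1_{\mathcal Q}$, and Lemma~\ref{lem:scalar-square-maximal} then applies because $g\in L^2$. So the only subtlety is invoking the equivalence at the level of the function $g$ itself, not at the level of the atoms. The constants depend only on those in Theorem~\ref{thm:DZ-maximal-atomic}, Lemma~\ref{lem:scalar-square-maximal}, and the two lemmas, so they are uniform in $Q$ and $g$.
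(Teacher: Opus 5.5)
Your proposal is correct and follows the paper's proof step for step. The paper likewise gets the decomposition and \eqref{eq:one-parameter-localization-proved} from Corollary~\ref{cor:scalar-square-atomic} together with \eqref{eq:covering-atomic-localization}, and the supported case from \eqref{eq:supported-local-to-covering} followed by the reverse direction of the same corollary applied to $g$ itself, with the trivial case $g=0$ treated separately by taking all coefficients zero.
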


\begin{proof}
Under \eqref{ass:F}, \eqref{ass:K},
and \eqref{ass:D}, the theorem of
Dziuba\'nski and Zienkiewicz
\cite[Theorem~2.2]{DZ_Studia},
recalled in Theorem~\ref{thm:DZ-maximal-atomic},
identifies $H^1_{\mathcal Q}$ with the Hardy
space defined by $M_L$.
Together with \eqref{eq:scalar-square-maximal},
this gives \eqref{eq:scalar-square-atomic}.
The assumptions on the potential,
\eqref{ass:K} and \eqref{ass:D}, enter through
this characterization; \eqref{ass:F} is also
used in the localization estimates below.

First, let $g\in L^2$ with $S_Lg\in L^1$.
By \eqref{eq:scalar-square-atomic},
\[
    g\in H^1_{\mathcal Q},
    \qquad
    \|g\|_{H^1_{\mathcal Q}}
    \leq C\|S_Lg\|_1.
\]
If $g\neq0$, the definition of the atomic
norm allows us to choose a representation
\[
    g=\sum_{\nu=1}^{\infty}\lambda_\nu a_\nu
    \quad\text{in }L^1,
    \qquad
    \sum_{\nu=1}^{\infty}|\lambda_\nu|
    \leq2\|g\|_{H^1_{\mathcal Q}}
    \leq C\|S_Lg\|_1.
\]
For $g=0$, take all coefficients equal to zero.
Furthermore,
\eqref{eq:covering-atomic-localization} gives
\[
    \sum_{Q\in\mathcal Q}
        \|\phi_Qg\|_{h^1_{\ell(Q)}}
    \leq C\|g\|_{H^1_{\mathcal Q}}
    \leq C\|S_Lg\|_1,
\]
which proves
\eqref{eq:one-parameter-localization-proved}. Now let $Q\in\mathcal Q$ and
$g\in L^2\cap h^1_{\ell(Q)}$ vanish almost
everywhere outside $Q^{**}$.
By \eqref{eq:supported-local-to-covering},
\[
    g\in H^1_{\mathcal Q},
    \qquad
    \|g\|_{H^1_{\mathcal Q}}
    \leq C\|g\|_{h^1_{\ell(Q)}}.
\]
Choosing an atomic representation as above
therefore gives
\[
    g=\sum_{\nu=1}^{\infty}\lambda_\nu a_\nu
    \quad\text{in }L^1,
    \qquad
    \sum_{\nu=1}^{\infty}|\lambda_\nu|
    \leq C\|g\|_{h^1_{\ell(Q)}}.
\]
Finally, \eqref{eq:scalar-square-atomic} yields
\[
    \|S_Lg\|_1
    \leq C\|g\|_{H^1_{\mathcal Q}}
    \leq C\|g\|_{h^1_{\ell(Q)}},
\]
proving \eqref{eq:one-parameter-supported-proved}.
\end{proof}
\subsection{Passage to the product setting}
\label{subsec:product-comparison}

We now apply Proposition~\ref{prop:one-parameter-comparison}
to $L_1$ and $L_2$.
We first record its sequence-valued form. For $G=(g_\nu)_{\nu\geq1}$, the operators act
coordinatewise, and
\[
    S_{L_i}G(x_i)
    =
    \left(
        \int_0^\infty
        \int_{|y_i-x_i|<t}
            \sum_{\nu\geq1}
                |t^2L_i e^{-t^2L_i}g_\nu(y_i)|^2
            \frac{dy_i\,dt}{t^{n_i+1}}
    \right)^{1/2}.
\]
Tonelli's theorem gives
\begin{equation}\label{eq:sequence-square-function}
    S_{L_i}G
    =
    \left(
        \sum_{\nu\geq1}|S_{L_i}g_\nu|^2
    \right)^{1/2}.
\end{equation}
The local operators $\mathcal B_{i,r}$ are
extended coordinatewise as well.

\begin{lemma}\label{lem:sequence-comparison}
For $i=1,2$ and every sequence
$(g_\nu)_{\nu\geq1}$ in $L^2(\mathbb R^{n_i})$,
\begin{equation}\label{eq:vector-localization-needed}
\begin{split}
    &\sum_{Q\in\mathcal Q_i}
    \int_{\mathbb R^{n_i}}
        \left(
            \sum_{\nu\geq1}
                \|\mathcal B_{i,\ell(Q)}
                    (\phi_Qg_\nu)(x_i)\|_{\ell^2}^2
        \right)^{1/2}dx_i\leq
    C\int_{\mathbb R^{n_i}}
        \left(
            \sum_{\nu\geq1}|S_{L_i}g_\nu(x_i)|^2
        \right)^{1/2}dx_i.
\end{split}
\end{equation}
If every $g_\nu$ vanishes almost everywhere
outside the same cube $Q^{**}$, then
\begin{equation}\label{eq:vector-supported-needed}
\begin{split}
    &\int_{\mathbb R^{n_i}}
        \left(
            \sum_{\nu\geq1}|S_{L_i}g_\nu(x_i)|^2
        \right)^{1/2}dx_i\leq
    C\int_{\mathbb R^{n_i}}
        \left(
            \sum_{\nu\geq1}
                \|\mathcal B_{i,\ell(Q)}
                    g_\nu(x_i)\|_{\ell^2}^2
        \right)^{1/2}dx_i.
\end{split}
\end{equation}
\end{lemma}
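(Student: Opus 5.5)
The plan is to deduce both inequalities from the scalar estimates of Proposition~\ref{prop:one-parameter-comparison} via the Rademacher argument of Lemma~\ref{lem:sequence-extension}.

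The localization inequality \eqref{eq:vector-localization-needed} has a sum over $Q$ on the left, so it does not fit Lemma~\ref{lem:sequence-extension} directly. We therefore encode the whole left-hand side as a single Hilbert-valued operator. Set $H_B=\ell^2(\mathcal Q_i\times\mathbb N_0)$, with $\mathbb N_0=\{0,1,2,\dots\}$; the level is not the issue. The real obstacle is that $\sum_Q\int\|\cdot\|$ is an $\ell^1$ sum over $Q$, not an $\ell^2$ sum. We handle this by applying the randomization argument by hand, following the proof of Lemma~\ref{lem:sequence-extension}.

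\textbf{Step 1.} For $N\ge1$ and $\tau\in[0,1]$, put $g^\tau=\sum_{\nu\le N}r_\nu(\tau)g_\nu\in L^2$. Apply \eqref{eq:one-parameter-localization-proved} to $g^\tau$; this is trivially valid when $S_{L_i}g^\tau\notin L^1$. This gives
\[
\sum_{Q}\int\|\mathcal B_{i,\ell(Q)}(\phi_Qg^\tau)\|_{\ell^2}\,dx_i\le C\int S_{L_i}g^\tau\,dx_i.
\]

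\textbf{Step 2.} Integrate in $\tau$. On the left, exchange the integral with the sum over $Q$ and the $x_i$-integral by Tonelli. For fixed $Q$ and $x_i$, the vector $\mathcal B_{i,\ell(Q)}(\phi_Qg^\tau)(x_i)$ equals $\sum_\nu r_\nu(\tau)v_\nu$ with $v_\nu=\mathcal B_{i,\ell(Q)}(\phi_Qg_\nu)(x_i)\in\ell^2$, by linearity. The lower Khintchine bound \eqref{eq:sequence-khintchine} then yields the left side of \eqref{eq:vector-localization-needed}, truncated at $N$, up to a constant.

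On the right, $S_{L_i}g^\tau(x_i)$ is the $L^2(\Gamma_i(x_i))$ norm of $\sum_\nu r_\nu(\tau)w_\nu$, where $w_\nu=t^2L_ie^{-t^2L_i}g_\nu$ restricted to the cone. The upper Khintchine bound, that is, Cauchy--Schwarz and orthogonality, bounds $\int_0^1 S_{L_i}g^\tau\,d\tau$ by $(\sum_{\nu\le N}|S_{L_i}g_\nu|^2)^{1/2}$. Measurability in $(\tau,x_i)$ is clear, since $\tau\mapsto g^\tau$ takes finitely many values. Letting $N\to\infty$ by monotone convergence finishes \eqref{eq:vector-localization-needed}.

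\textbf{Step 3.} For \eqref{eq:vector-supported-needed}, apply Lemma~\ref{lem:sequence-extension} directly. Take $B=S_{L_i}$, valued in $L^2(\Gamma_i)$, written via the Hilbert space of functions on the cone, for example after translating the cone to the origin. Take $A=\mathcal B_{i,\ell(Q)}$ with values in $\ell^2$. Both act on the common vector space
\[
\{g\in L^2:\ g=0 \text{ a.e.\ outside } Q^{**}\}.
\]
The scalar hypothesis is \eqref{eq:one-parameter-supported-proved}, which requires $g\in L^2\cap h^1_{\ell(Q)}$. This holds because finiteness of $\|\mathcal B_{i,\ell(Q)}g\|_{L^1(\ell^2)}$, together with $g\in L^1$ (from compact support and $L^2$), means $g\in h^1_{\ell(Q)}$.

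The space is closed under the Rademacher combinations used in the lemma. Moreover, $\|S_{L_i}g(x_i)\|$ equals the $L^2$ norm of the Hilbert-valued object, consistent with \eqref{eq:sequence-square-function}. Lemma~\ref{lem:sequence-extension} therefore gives \eqref{eq:vector-supported-needed}, with finite families first and then monotone convergence.

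\textbf{Main difficulty.} The delicate point is the $\ell^1$-in-$Q$ structure of \eqref{eq:vector-localization-needed}. This is why Lemma~\ref{lem:sequence-extension} is replaced there by a direct randomization, where Khintchine is applied pointwise for each $Q$ before summing. A secondary issue is placing $S_{L_i}$ in a fixed separable Hilbert space; the change of variables $y_i=x_i+z$ makes the space $L^2(\{|z|<t\},dz\,dt/t^{n_i+1})$ independent of $x_i$.
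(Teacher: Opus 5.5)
Your proposal is correct and follows essentially the paper's route. Both apply Rademacher randomization to the scalar estimates of Proposition~\ref{prop:one-parameter-comparison}. Khintchine is used pointwise for each fixed $Q$ and $x_i$ on the left and in the cone $L^2$ space on the right, with Tonelli handling the $\ell^1$ sum over $Q$ and monotone convergence in $N$. The differences are cosmetic. You justify applying the scalar bound by noting it is trivial when $S_{L_i}g^\tau\notin L^1$, instead of deducing $S_{L_i}g^\tau\in L^1$ from $S_{L_i}g_\nu\in L^1$. For the supported estimate you invoke Lemma~\ref{lem:sequence-extension} directly after translating the cone to a fixed Hilbert space, where the paper simply repeats the argument.
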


\begin{proof}
We first consider a finite sequence
$g_1,\ldots,g_N$ with finite right-hand side.
For $\tau\in[0,1]$, apply
\eqref{eq:one-parameter-localization-proved}
to
$
    \sum_{\nu=1}^N r_\nu(\tau)g_\nu
$
and integrate in $\tau$.
This application is legitimate because
the finite right-hand side implies
$S_{L_i}g_\nu\in L^1$ for every $\nu$,
and hence the same holds for each finite
Rademacher sum.

On the left, for each fixed $Q$ and $x_i$,
apply \eqref{eq:sequence-khintchine} to
the vectors
$
    \mathcal B_{i,\ell(Q)}
        (\phi_Qg_\nu)(x_i)\in\ell^2.
$.
On the right, for each fixed $x_i$,
apply the same inequality to the functions
$
    (y_i,t)\longmapsto
    t^2L_i e^{-t^2L_i}g_\nu(y_i)
$
in the Hilbert space
$
    L^2\big(
        \Gamma_i(x_i),
        \frac{dy_i\,dt}{t^{n_i+1}}
    \big)
$.
Their norms are precisely $S_{L_i}g_\nu(x_i)$.
Thus the argument of
Lemma~\ref{lem:sequence-extension},
together with Tonelli's theorem for the
spatial integrals and the sum over $Q$,
gives \eqref{eq:vector-localization-needed}.

For \eqref{eq:vector-supported-needed},
apply the same argument to
\eqref{eq:one-parameter-supported-proved}.
Every Rademacher sum vanishes outside
$Q^{**}$.
Moreover, each $g_\nu$ belongs to $L^1$,
since it belongs to $L^2$ and is supported
in a set of finite measure.
Finiteness of the right-hand side gives
\[
    \|\mathcal B_{i,\ell(Q)}g_\nu\|_{L^1(\ell^2)}
    <\infty,
\]
so each $g_\nu$, and therefore every finite
Rademacher sum, belongs to $h^1_{\ell(Q)}$.
The scalar estimate is thus applicable.
Integrating in $\tau$ and using
\eqref{eq:sequence-khintchine} for the cone
norm on the left and the $\ell^2$ norm on
the right proves the desired inequality.

The constants are independent of $N$.
Applying the finite-sequence estimates to
the first $N$ terms and letting $N\to\infty$,
we obtain both assertions for infinite
sequences by monotone convergence.
\end{proof}

\begin{lemma}\label{lem:product-localization}
For every $f\in L^2$ with $S_{L_1,L_2}f\in L^1$,
\begin{equation}\label{eq:product-localization-upper}
    \sum_{Q_1\in\mathcal Q_1}
    \sum_{Q_2\in\mathcal Q_2}
        \|\phi_{Q_1}\phi_{Q_2}f\|
            _{h^1_{\ell(Q_1),\ell(Q_2)}}
    \leq C\|S_{L_1,L_2}f\|_1.
\end{equation}
\end{lemma}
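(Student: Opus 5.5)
The plan is to apply Lemma~\ref{lem:sequence-comparison} twice, once in each variable, using the commutation of operators acting in different variables.

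\textbf{First step (variable $x_1$).} For fixed $x_2$, consider the family of functions of $x_1$ indexed by $\nu=(y_2,t_2)$ in the cone $\Gamma_2(x_2)$,
\[
    g_{(y_2,t_2)}(x_1)=\bigl(t_2^2L_2e^{-t_2^2L_2}f\bigr)(x_1,y_2).
\]
This family takes values in the Hilbert space $L^2(\Gamma_2(x_2),dy_2\,dt_2/t_2^{n_2+1})$, not in a countable sequence space. Lemma~\ref{lem:sequence-comparison} is stated for sequences. I would pass to an orthonormal basis of this separable Hilbert space, which turns the family into a sequence of $L^2(\mathbb R^{n_1})$ functions with the same pointwise norms. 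Alternatively, I would rerun the Khintchine argument of Lemma~\ref{lem:sequence-extension} directly with Hilbert-valued inputs.

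Since $L_1$ commutes with the $x_2$-operators, the right-hand side of \eqref{eq:vector-localization-needed} becomes exactly $\int S_{L_1,L_2}f(x_1,x_2)\,dx_1$. Integrating in $x_2$ and using Tonelli then gives
\[
    \sum_{Q_1}\iint\Bigl(\iint_{\Gamma_2(x_2)}
        \bigl\|\mathcal B_{1,\ell(Q_1)}\bigl(\phi_{Q_1}\,t_2^2L_2e^{-t_2^2L_2}f\bigr)(x_1,y_2)\bigr\|_{\ell^2}^2
        \frac{dy_2\,dt_2}{t_2^{n_2+1}}\Bigr)^{1/2}dx_1\,dx_2
    \leq C\|S_{L_1,L_2}f\|_1 .
\]
The left-hand side can be read differently. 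For fixed $Q_1$ and $x_1$, it is the $x_2$-integral of the vector square function $S_{L_2}$ applied to the sequence
\[
    h_j(x_2)=\bigl(\psi^{(1)}_{j,\ell(Q_1)}*_1(\phi_{Q_1}f)\bigr)(x_1,x_2),\qquad j\geq0,
\]
by \eqref{eq:sequence-square-function}. This uses that $\phi_{Q_1}$ and $\psi^{(1)}*_1$ commute with $L_2$-operators. Some measurability and $L^2$ membership of the $h_j$ for a.e.\ $x_1$ must be checked; this follows from Young's inequality and Fubini, since $f\in L^2$.

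\textbf{Second step (variable $x_2$).} For fixed $Q_1$ and $x_1$, apply \eqref{eq:vector-localization-needed} with $i=2$ to $(h_j)_j$. This bounds
\[
    \sum_{Q_2}\int\Bigl(\sum_{j}\bigl\|\mathcal B_{2,\ell(Q_2)}(\phi_{Q_2}h_j)(x_2)\bigr\|_{\ell^2}^2\Bigr)^{1/2}dx_2
\]
by $C\int S_{L_2}(h_j)_j\,dx_2$. Since $\phi_{Q_2}$ commutes with $\psi^{(1)}*_1$, the integrand on the left equals $S^{\mathrm{loc}}_{\ell(Q_1),\ell(Q_2)}(\phi_{Q_1}\phi_{Q_2}f)(x_1,x_2)$. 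Integrating in $x_1$ and summing over $Q_1$ via Tonelli, the left side becomes $\sum_{Q_1,Q_2}\|\phi_{Q_1}\phi_{Q_2}f\|_{h^1_{\ell(Q_1),\ell(Q_2)}}$. It is bounded by the quantity obtained in the first step, which is at most $C\|S_{L_1,L_2}f\|_1$. Finite truncations in $j$ with monotone convergence handle the infinite sums.

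\textbf{Main obstacle.} I expect the main obstacle to be justifying the first step with the continuum-indexed, cone-dependent family. The Hilbert space $L^2(\Gamma_2(x_2))$ varies with $x_2$, so the basis reduction has to be done carefully. A cleaner route is to apply the lemma with the fixed Hilbert space $L^2(\mathbb R^{n_2}\times(0,\infty))$, inserting the indicator of $\{|y_2-x_2|<t_2\}$ as a weight that does not depend on $x_1$. The hypothesis of the lemma, finiteness of the right-hand side, holds for a.e.\ $x_2$ by Fubini, since $S_{L_1,L_2}f\in L^1$.
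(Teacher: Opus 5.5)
Your proposal is correct and is essentially the paper's argument with the roles of the variables interchanged. The paper first fixes $x_1$, expands in an orthonormal basis of $L^2(\Gamma_1(x_1))$ and localizes in $x_2$, obtaining $\sum_{Q_2}\|S_{L_1}(\mathcal B_{2,\ell(Q_2)}(\phi_{Q_2}f))\|_1\le C\|S_{L_1,L_2}f\|_1$, and only then localizes in $x_1$ using the sequence indexed by $k$; its handling of the vertex-dependent cone space is exactly your basis reduction for a fixed vertex, noting that after Parseval only measurable square functions remain, so no measurable choice of bases is needed.
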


\begin{proof}
We first localize in $x_2$, retaining the
cone variables associated with $x_1$.
An orthonormal expansion allows us to use
\eqref{eq:vector-localization-needed}.

\medskip
\noindent
\textbf{Step 1: Coefficients in the first cone.} By \eqref{eq:schrodinger-square-L2} and
Fubini's theorem, for almost every fixed $x_1$,
\[
    \int_{\mathbb R^{n_2}}
    \iint_{\Gamma_1(x_1)}
        |(t_1^2L_1e^{-t_1^2L_1}f)(y_1,x_2)|^2
        \frac{dy_1\,dt_1}{t_1^{n_1+1}}\,dx_2
    <\infty.
\]
Fix such an $x_1$ and choose an orthonormal
basis $(e_\nu)_{\nu\geq1}$ of
$
    L^2\big(
        \Gamma_1(x_1),
        \frac{dy_1\,dt_1}{t_1^{n_1+1}}
    \big)
$. Define
\[
    g_\nu(x_2)
    =
    \iint_{\Gamma_1(x_1)}
        (t_1^2L_1e^{-t_1^2L_1}f)(y_1,x_2)
        \overline{e_\nu(y_1,t_1)}
        \frac{dy_1\,dt_1}{t_1^{n_1+1}}.
\]
Each $g_\nu$ belongs to $L^2(\mathbb R^{n_2})$,
because taking a coefficient against a unit
vector is bounded on the cone $L^2$ space.
The dependence on the fixed $x_1$ is suppressed.

\medskip
\noindent
\textbf{Step 2: Identification by Parseval's identity.} The operator $t_2^2L_2e^{-t_2^2L_2}$ acts only
in the second variable and therefore commutes
with taking coefficients against $e_\nu$.
Thus
\begin{align*}
    &(t_2^2L_2e^{-t_2^2L_2}g_\nu)(y_2)=
    \iint_{\Gamma_1(x_1)}
        \big[
            (t_1^2L_1e^{-t_1^2L_1})
            (t_2^2L_2e^{-t_2^2L_2})f
        \big](y_1,y_2)
        \overline{e_\nu(y_1,t_1)}
        \frac{dy_1\,dt_1}{t_1^{n_1+1}}.
\end{align*}
This equality holds in $L^2$ in the second
variable, since the operations involved are
bounded on the relevant $L^2$ spaces.

Parseval's identity gives
\begin{align*}
    &\sum_{\nu\geq1}
        |(t_2^2L_2e^{-t_2^2L_2}g_\nu)(y_2)|^2=
    \iint_{\Gamma_1(x_1)}
        \left|
            (t_1^2L_1e^{-t_1^2L_1})
            (t_2^2L_2e^{-t_2^2L_2})f(y_1,y_2)
        \right|^2
        \frac{dy_1\,dt_1}{t_1^{n_1+1}}.
\end{align*}
Integrating over $\Gamma_2(x_2)$ and using
Tonelli's theorem yields
\begin{equation}\label{eq:localization-parseval-square}
    \left(
        \sum_{\nu\geq1}|S_{L_2}g_\nu(x_2)|^2
    \right)^{1/2}
    =
    S_{L_1,L_2}f(x_1,x_2).
\end{equation}

Fix $Q_2\in\mathcal Q_2$.
Multiplication by $\phi_{Q_2}$ and convolution
with $\psi^{(2)}_{k,\ell(Q_2)}$ also act only
in the second variable. Hence
\begin{align*}
    &\bigl(
        \psi^{(2)}_{k,\ell(Q_2)}
        *(\phi_{Q_2}g_\nu)
    \bigr)(x_2)=
    \iint_{\Gamma_1(x_1)}
        \Big[
            t_1^2L_1e^{-t_1^2L_1}
            \bigl(
                \psi^{(2)}_{k,\ell(Q_2)}
                *_2(\phi_{Q_2}f)
            \bigr)
        \Big](y_1,x_2)
        \overline{e_\nu(y_1,t_1)}
        \frac{dy_1\,dt_1}{t_1^{n_1+1}}.
\end{align*}
For each fixed $k$, Parseval's identity replaces
the sum of squared coefficients over $\nu$
by the cone integral of the squared integrand.
Summing next over $k$ and applying Tonelli's
theorem gives
\begin{equation}\label{eq:localization-parseval-local}
\begin{split}
    &\left(
        \sum_{\nu\geq1}
            \|\mathcal B_{2,\ell(Q_2)}
                (\phi_{Q_2}g_\nu)(x_2)\|_{\ell^2}^2
    \right)^{1/2}=
    S_{L_1}\bigl(
        \mathcal B_{2,\ell(Q_2)}(\phi_{Q_2}f)
    \bigr)(x_1,x_2).
\end{split}
\end{equation}
On the right, $S_{L_1}$ acts on the sequence
indexed by the local kernel index $k$.

\medskip
\noindent
\textbf{Step 3: Localization in the second variable.} By \eqref{eq:localization-parseval-square}
and $S_{L_1,L_2}f\in L^1$, the sequence
$(g_\nu)$ has finite right-hand side in
\eqref{eq:vector-localization-needed}
for almost every $x_1$.
Apply that estimate in $x_2$, substitute
\eqref{eq:localization-parseval-local} and
\eqref{eq:localization-parseval-square}, and
integrate in $x_1$.
We obtain
\begin{equation}\label{eq:localize-second-variable}
    \sum_{Q_2\in\mathcal Q_2}
        \left\|
            S_{L_1}\bigl(
                \mathcal B_{2,\ell(Q_2)}(\phi_{Q_2}f)
            \bigr)
        \right\|_1
    \leq C\|S_{L_1,L_2}f\|_1.
\end{equation}
Although the basis was chosen for fixed $x_1$,
the expressions after the Parseval identities
are the measurable square functions displayed
above; no choice of bases measurable in $x_1$
is needed.

\medskip
\noindent
\textbf{Step 4: Localization in the first variable.} Fix $Q_2$ and, for almost every $x_2$, set
\[
    g_k(x_1)
    =
    \big(
        \psi^{(2)}_{k,\ell(Q_2)}
        *_2(\phi_{Q_2}f)
    \big)(x_1,x_2).
\]
These functions belong to $L^2(\mathbb R^{n_1})$
by Young's inequality and Fubini's theorem.
Equation~\eqref{eq:localize-second-variable}
and \eqref{eq:sequence-square-function} show that
\[
    \int_{\mathbb R^{n_1}}
        \left(
            \sum_{k\geq0}|S_{L_1}g_k(x_1)|^2
        \right)^{1/2}dx_1<\infty
\]
for almost every $x_2$.
We may therefore apply
\eqref{eq:vector-localization-needed} in $x_1$.

Since $\phi_{Q_1}$ depends only on $x_1$,
\[
    \phi_{Q_1}g_k
    =
    \bigl(
        \psi^{(2)}_{k,\ell(Q_2)}
        *_2(\phi_{Q_1}\phi_{Q_2}f)
    \bigr)(\cdot,x_2).
\]

Taking the squared $\ell^2$ norm sums over $j$;
summing these norms over $k$ therefore gives
\begin{align*}
    &\sum_{k\geq0}
        \|\mathcal B_{1,\ell(Q_1)}
            (\phi_{Q_1}g_k)(x_1)\|_{\ell^2}^2=
    \sum_{j,k\geq0}
        \left|
            \bigl(
                \psi^{(1)}_{j,\ell(Q_1)}*_1
                \psi^{(2)}_{k,\ell(Q_2)}*_2
                (\phi_{Q_1}\phi_{Q_2}f)
            \bigr)(x_1,x_2)
        \right|^2.
\end{align*}
By \eqref{eq:local-product-square}, the square
root of this expression is
$S^{\mathrm{loc}}_{\ell(Q_1),\ell(Q_2)}
(\phi_{Q_1}\phi_{Q_2}f)(x_1,x_2)$.
Likewise, by the definition of the square
function for sequence-valued inputs,
\[
    \left(
        \sum_{k\geq0}|S_{L_1}g_k(x_1)|^2
    \right)^{1/2}
    =
    S_{L_1}\bigl(
        \mathcal B_{2,\ell(Q_2)}(\phi_{Q_2}f)
    \bigr)(x_1,x_2).
\]

Applying \eqref{eq:vector-localization-needed}
in the first variable to $(g_k)_{k\geq0}$,
and substituting these two identities, we obtain
\[
    \sum_{Q_1\in\mathcal Q_1}
    \int_{\mathbb R^{n_1}}
        S^{\mathrm{loc}}_{\ell(Q_1),\ell(Q_2)}
        (\phi_{Q_1}\phi_{Q_2}f)(x_1,x_2)\,dx_1
    \leq
    C\int_{\mathbb R^{n_1}}
        S_{L_1}\bigl(
            \mathcal B_{2,\ell(Q_2)}(\phi_{Q_2}f)
        \bigr)(x_1,x_2)\,dx_1.
\]
This inequality holds for almost every $x_2$.
We integrate both sides over
$x_2\in\mathbb R^{n_2}$ and use Tonelli's
theorem to interchange the sum over $Q_1$
with the spatial integrals.
By \eqref{eq:local-product-Hardy-norm}, the
integral of each local product square function
is its local Hardy norm. Hence
\[
    \sum_{Q_1\in\mathcal Q_1}
        \|\phi_{Q_1}\phi_{Q_2}f\|
            _{h^1_{\ell(Q_1),\ell(Q_2)}}
    \leq
    C\left\|
        S_{L_1}\bigl(
            \mathcal B_{2,\ell(Q_2)}(\phi_{Q_2}f)
        \bigr)
    \right\|_1,
\]
where the $L^1$ norm on the right is taken
over $\mathbb R^{n_1}\times\mathbb R^{n_2}$.
Each localized function is in $L^1$, since it
is in $L^2$ and has bounded support.
Thus the local Hardy norms here are understood
in the sense of
\eqref{eq:local-product-Hardy-norm}.
Summing over $Q_2$ and applying
\eqref{eq:localize-second-variable} proves
\eqref{eq:product-localization-upper}.
\end{proof}

\begin{lemma}\label{lem:product-supported}
Let $Q_i\in\mathcal Q_i$, $i=1,2$.
For every $g\in L^2$ vanishing almost everywhere
outside $Q_1^{**}\times Q_2^{**}$,
\begin{equation}\label{eq:product-supported-comparison}
    \|S_{L_1,L_2}g\|_1
    \leq C\|g\|_{h^1_{\ell(Q_1),\ell(Q_2)}}.
\end{equation}
The constant is independent of the cubes and $g$.
\end{lemma}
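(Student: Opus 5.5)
The plan is to mirror the proof of Lemma~\ref{lem:product-localization}, now running the supported estimate \eqref{eq:vector-supported-needed} in place of \eqref{eq:vector-localization-needed}. We apply it first in $x_1$ with the $x_2$-data retained as a vector index, and then in $x_2$ with the first cone retained through an orthonormal expansion. Since both sides may be infinite, we assume $\|g\|_{h^1_{\ell(Q_1),\ell(Q_2)}}<\infty$. Note that $g\in L^1$ because it lies in $L^2$ and has bounded support.

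\textbf{Step 1: the first variable.} For almost every $x_2$, set $g_k(x_1)=(\psi^{(2)}_{k,\ell(Q_2)}*_2g)(x_1,x_2)$ for $k\geq0$. Each $g_k$ lies in $L^2(\mathbb R^{n_1})$ and vanishes outside $Q_1^{**}$, since convolution in $x_2$ preserves the $x_1$-support. Moreover,
\[
    \Bigl(\sum_k\|\mathcal B_{1,\ell(Q_1)}g_k(x_1)\|_{\ell^2}^2\Bigr)^{1/2}
    =S^{\mathrm{loc}}_{\ell(Q_1),\ell(Q_2)}g(x_1,x_2),
\]
which is integrable in $x_1$ for almost every $x_2$. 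Applying \eqref{eq:vector-supported-needed} in $x_1$ and integrating in $x_2$ should give
\[
    \bigl\|S_{L_1}\bigl(\mathcal B_{2,\ell(Q_2)}g\bigr)\bigr\|_1
    \leq C\|g\|_{h^1_{\ell(Q_1),\ell(Q_2)}}.
\]

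\textbf{Step 2: the second variable.} For almost every $x_1$, we expand in an orthonormal basis $(e_\nu)$ of $L^2(\Gamma_1(x_1),dy_1\,dt_1/t_1^{n_1+1})$. We put
\[
    g_\nu(x_2)=\iint_{\Gamma_1(x_1)}(t_1^2L_1e^{-t_1^2L_1}g)(y_1,x_2)\,\overline{e_\nu(y_1,t_1)}\,\frac{dy_1\,dt_1}{t_1^{n_1+1}}.
\]
Since $t_1^2L_1e^{-t_1^2L_1}$ acts only in $x_1$, each $g_\nu\in L^2(\mathbb R^{n_2})$ still vanishes outside $Q_2^{**}$. As in \eqref{eq:localization-parseval-square} and \eqref{eq:localization-parseval-local}, without the cutoff $\phi_{Q_2}$, Parseval's identity gives
\[
    \Bigl(\sum_\nu|S_{L_2}g_\nu(x_2)|^2\Bigr)^{1/2}=S_{L_1,L_2}g(x_1,x_2)
\]
and
\[
    \Bigl(\sum_\nu\|\mathcal B_{2,\ell(Q_2)}g_\nu(x_2)\|_{\ell^2}^2\Bigr)^{1/2}
    =S_{L_1}\bigl(\mathcal B_{2,\ell(Q_2)}g\bigr)(x_1,x_2).
\]
By Step~1 the right-hand side is integrable for almost every $x_1$. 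Applying \eqref{eq:vector-supported-needed} in $x_2$, integrating in $x_1$ and combining with Step~1 then yields \eqref{eq:product-supported-comparison}.

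\textbf{Main obstacle.} The delicate point is verifying the hypotheses of Lemma~\ref{lem:sequence-comparison} in each application. The lemma requires the inputs to lie in $L^2$, share a support in $Q^{**}$, and have a finite right-hand side; its proof also needs each input in $h^1_{\ell(Q)}$. Support and $L^2$ membership follow from Fubini's theorem and the boundedness of the operators on $L^2$. Finiteness comes from Step~1, respectively from the hypothesis on $g$, holding off a null set. As in Step~3 of Lemma~\ref{lem:product-localization}, the Parseval identities produce measurable expressions, so no measurable choice of bases in $x_1$ is needed.
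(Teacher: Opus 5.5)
Your proof is correct and is essentially the paper's argument. The paper applies \eqref{eq:vector-supported-needed} first in $x_2$ to the components of $\mathcal B_{1,\ell(Q_1)}g$, and then in $x_1$ after an orthonormal expansion in the second cone. You carry out the same two steps with the roles of the variables interchanged, which works equally well by symmetry.
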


\begin{proof}
We follow the argument of
Lemma~\ref{lem:product-localization}, using
\eqref{eq:vector-supported-needed} in place of
\eqref{eq:vector-localization-needed}.
This gives the comparison in the opposite
direction: we control the Schr\"odinger square
function by the local product Hardy norm.
At each application, we verify that the
functions satisfy the required support condition.

We may assume that the right-hand side is finite.

\medskip
\noindent
\textbf{Step 1: The supported estimate in the second variable.} For almost every $x_1$, apply
\eqref{eq:vector-supported-needed} in $x_2$
to the components of
$\mathcal B_{1,\ell(Q_1)}g$.
They vanish outside $Q_2^{**}$, since convolution
in $x_1$ does not change support in $x_2$.
Their $L^2$ integrability follows from Young's
inequality and Fubini's theorem.
The right-hand side of the sequence estimate
is integrable for almost every $x_1$, since
its integral in $x_1$ is
$\|g\|_{h^1_{\ell(Q_1),\ell(Q_2)}}$.
Consequently,
\begin{equation}\label{eq:supported-second-variable}
    \left\|
        S_{L_2}\bigl(\mathcal B_{1,\ell(Q_1)}g\bigr)
    \right\|_1
    \leq C\|g\|_{h^1_{\ell(Q_1),\ell(Q_2)}}.
\end{equation}

\medskip
\noindent
\textbf{Step 2: The supported estimate in the first variable.}

Use the coefficient construction from
Lemma~\ref{lem:product-localization}, with
the variables interchanged.
For almost every fixed $x_2$, choose an
orthonormal basis $(e_\nu)$ of
$
    L^2\big(
        \Gamma_2(x_2),
        \frac{dy_2\,dt_2}{t_2^{n_2+1}}
    \big)
$,
and set
\[
    g_\nu(x_1)
    =
    \iint_{\Gamma_2(x_2)}
        (t_2^2L_2e^{-t_2^2L_2}g)(x_1,y_2)
        \overline{e_\nu(y_2,t_2)}
        \frac{dy_2\,dt_2}{t_2^{n_2+1}}.
\]
By \eqref{eq:schrodinger-square-L2} and Fubini's
theorem, each $g_\nu$ belongs to $L^2$.
It vanishes outside $Q_1^{**}$, because the
operator inside the integral acts only in $x_2$.

The Parseval argument in
\eqref{eq:localization-parseval-square} and
\eqref{eq:localization-parseval-local}, with
the variables interchanged, gives
\begin{align*}
    \left(
        \sum_{\nu\geq1}|S_{L_1}g_\nu(x_1)|^2
    \right)^{1/2}
    &=
    S_{L_1,L_2}g(x_1,x_2),\\
    \left(
        \sum_{\nu\geq1}
            \|\mathcal B_{1,\ell(Q_1)}
                g_\nu(x_1)\|_{\ell^2}^2
    \right)^{1/2}
    &=
    S_{L_2}\bigl(
        \mathcal B_{1,\ell(Q_1)}g
    \bigr)(x_1,x_2).
\end{align*}
By \eqref{eq:supported-second-variable}, the
second expression is integrable in $x_1$
for almost every $x_2$.
Thus \eqref{eq:vector-supported-needed} applies.
Substituting these identities and integrating
in $x_2$ yields
\[
    \|S_{L_1,L_2}g\|_1
    \leq
    C\left\|
        S_{L_2}\bigl(\mathcal B_{1,\ell(Q_1)}g\bigr)
    \right\|_1.
\]
Combining this with
\eqref{eq:supported-second-variable}
proves \eqref{eq:product-supported-comparison}.
\end{proof}

\begin{theorem}\label{thm:product-localization-comparison}
Under \eqref{ass:F}, \eqref{ass:K}, and
\eqref{ass:D}, every
$f\in L^2(\mathbb R^{n_1}\times\mathbb R^{n_2})$
satisfies
\begin{equation}\label{eq:product-localization-comparison}
\begin{split}
    C^{-1}\|S_{L_1,L_2}f\|_1
    &\leq
    \sum_{Q_1\in\mathcal Q_1}
    \sum_{Q_2\in\mathcal Q_2}
        \|\phi_{Q_1}\phi_{Q_2}f\|
            _{h^1_{\ell(Q_1),\ell(Q_2)}}\leq C\|S_{L_1,L_2}f\|_1.
\end{split}
\end{equation}
The inequalities are understood with values
in $[0,\infty]$.
\end{theorem}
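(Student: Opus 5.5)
The right-hand inequality will follow from Lemma~\ref{lem:product-localization}, provided we allow infinite values. If $S_{L_1,L_2}f\notin L^1$, the right-hand side equals $+\infty$ and there is nothing to prove. Otherwise $f$ lies in the defining class, and \eqref{eq:product-localization-upper} is exactly the upper bound. All of the real work is therefore in the left-hand inequality, where we may assume the middle sum is finite.

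For the lower bound, decompose $f$ using \eqref{eq:product-partition-of-unity}:
\[
    f=\sum_{Q_1\in\mathcal Q_1}\sum_{Q_2\in\mathcal Q_2}\phi_{Q_1}\phi_{Q_2}f .
\]
Each piece $f_{Q_1,Q_2}=\phi_{Q_1}\phi_{Q_2}f$ has the following properties:
\begin{itemize}
\item it lies in $L^2$;
\item it vanishes outside $Q_1^*\times Q_2^*\subset Q_1^{**}\times Q_2^{**}$;
\item its norm in $h^1_{\ell(Q_1),\ell(Q_2)}$ is finite.
\end{itemize}
Lemma~\ref{lem:product-supported} therefore gives
\[
    \|S_{L_1,L_2}f_{Q_1,Q_2}\|_1\le C\|f_{Q_1,Q_2}\|_{h^1_{\ell(Q_1),\ell(Q_2)}},
\]
with $C$ independent of the cubes. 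Since $S_{L_1,L_2}$ is sublinear, we would like to write
\[
    S_{L_1,L_2}f\le\sum_{Q_1,Q_2}S_{L_1,L_2}f_{Q_1,Q_2}
\]
almost everywhere and then integrate. Summing the pointwise bounds would give the claim with the same constant.

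The main obstacle is justifying that sublinearity step for an infinite sum. The expansion converges in $L^2$ unconditionally: the overlap bound \eqref{eq:covering-overlap} shows that the partial sums over finite sets $\mathcal F\subset\mathcal Q_1\times\mathcal Q_2$ are dominated by $C|f|$, and they converge almost everywhere, so dominated convergence applies. I plan to proceed in three steps.
\begin{enumerate}
\item For a finite set $\mathcal F$, write $f_{\mathcal F}=\sum_{\mathcal F}f_{Q_1,Q_2}$. Minkowski's inequality in the cone space $L^2(\Gamma_1\times\Gamma_2)$ gives $S_{L_1,L_2}f_{\mathcal F}\le\sum_{\mathcal F}S_{L_1,L_2}f_{Q_1,Q_2}$ pointwise.
\item By \eqref{eq:schrodinger-square-L2}, $S_{L_1,L_2}$ is bounded on $L^2$, and $|S_{L_1,L_2}f-S_{L_1,L_2}f_{\mathcal F}|\le S_{L_1,L_2}(f-f_{\mathcal F})$. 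Taking an exhausting sequence $\mathcal F_N$, we get $S_{L_1,L_2}f_{\mathcal F_N}\to S_{L_1,L_2}f$ in $L^2$, hence almost everywhere along a subsequence.
\item Pass to the limit in the pointwise inequality along that subsequence, then integrate using Tonelli's theorem. Combined with Lemma~\ref{lem:product-supported}, this bounds $\|S_{L_1,L_2}f\|_1$ by $C$ times the full double sum.
\end{enumerate}

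This plan relies on a point that must be checked: the finiteness of $\|f_{Q_1,Q_2}\|_{h^1}$ is used only to apply Lemma~\ref{lem:product-supported}. That lemma already treats $g\in L^2$ with compact support, and for such $g$ membership in $L^1$ is automatic. So no further approximation is needed, and both inequalities hold in $[0,\infty]$.
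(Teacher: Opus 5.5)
Your proposal is correct and follows essentially the same route as the paper. The upper bound is Lemma~\ref{lem:product-localization}, trivial when $S_{L_1,L_2}f\notin L^1$. The lower bound uses finite partial sums of the partition of unity, the triangle inequality with Lemma~\ref{lem:product-supported}, $L^2$ convergence $S_{L_1,L_2}f_N\to S_{L_1,L_2}f$ via the reverse triangle inequality and \eqref{eq:schrodinger-square-L2}, and an almost everywhere convergent subsequence. The only cosmetic difference is that you pass to the limit in the pointwise inequality and then apply Tonelli's theorem, whereas the paper applies Fatou's lemma to the integrated bounds for $\|S_{L_1,L_2}f_N\|_1$.
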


\begin{proof}
The second inequality is
Lemma~\ref{lem:product-localization} when
$\|S_{L_1,L_2}f\|_1$ is finite, and is immediate
otherwise.

For the first inequality, assume that the
sum of local norms is finite.
Choose increasing finite subfamilies
$\mathcal Q_i^{(N)}$ exhausting $\mathcal Q_i$,
and put
\[
    f_N
    =
    \sum_{{Q_1\in\mathcal Q_1^{(N)}\,
                    Q_2\in\mathcal Q_2^{(N)}}}
        \phi_{Q_1}\phi_{Q_2}f.
\]
By \eqref{eq:partition-of-unity} and
\eqref{eq:product-partition-of-unity},
$f_N\to f$ almost everywhere and
$|f_N-f|\leq|f|$.
Dominated convergence gives $f_N\to f$ in $L^2$.

Each $\phi_{Q_1}\phi_{Q_2}f$ is supported in
$Q_1^*\times Q_2^*\subset Q_1^{**}\times Q_2^{**}$.
The triangle inequality for the square function
and Lemma~\ref{lem:product-supported} imply
\[
    \|S_{L_1,L_2}f_N\|_1
    \leq
    C\sum_{{Q_1\in\mathcal Q_1^{(N)}\,
                    Q_2\in\mathcal Q_2^{(N)}}}
        \|\phi_{Q_1}\phi_{Q_2}f\|
            _{h^1_{\ell(Q_1),\ell(Q_2)}}.
\]
Moreover, the reverse triangle inequality and
\eqref{eq:schrodinger-square-L2} give
\[
    \|S_{L_1,L_2}f_N-S_{L_1,L_2}f\|_2
    \leq
    \|S_{L_1,L_2}(f_N-f)\|_2
    \leq C\|f_N-f\|_2
    \longrightarrow0.
\]
Along a subsequence, the square functions
therefore converge almost everywhere.
Fatou's lemma yields the first inequality
in \eqref{eq:product-localization-comparison}.
\end{proof}

\begin{corollary}\label{cor:schrodinger-core-L1}
If $f\in L^2$ and $S_{L_1,L_2}f\in L^1$, then
$f\in L^1$ and
\begin{equation}\label{eq:schrodinger-product-L1-embedding}
    \|f\|_1\leq C\|S_{L_1,L_2}f\|_1.
\end{equation}
\end{corollary}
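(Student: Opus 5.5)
The plan is to combine the localization inequality of Theorem~\ref{thm:product-localization-comparison} with the uniform $L^1$ embedding of the local product Hardy spaces, Lemma~\ref{lem:local-product-L1-embedding}, and the product partition of unity \eqref{eq:product-partition-of-unity}.

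Let $f\in L^2$ with $S_{L_1,L_2}f\in L^1$. By the second inequality in \eqref{eq:product-localization-comparison},
\[
    \sum_{Q_1\in\mathcal Q_1}\sum_{Q_2\in\mathcal Q_2}
        \|\phi_{Q_1}\phi_{Q_2}f\|_{h^1_{\ell(Q_1),\ell(Q_2)}}
    \leq C\|S_{L_1,L_2}f\|_1<\infty.
\]
Each localized piece $\phi_{Q_1}\phi_{Q_2}f$ is in $L^2$ and supported in the bounded set $Q_1^*\times Q_2^*$. Hence it lies in $L^1$, and therefore in $h^1_{\ell(Q_1),\ell(Q_2)}$, since its local norm is finite. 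Lemma~\ref{lem:local-product-L1-embedding} then gives
\[
    \|\phi_{Q_1}\phi_{Q_2}f\|_1
    \leq C\|\phi_{Q_1}\phi_{Q_2}f\|_{h^1_{\ell(Q_1),\ell(Q_2)}},
\]
and the constant is independent of the scales $\ell(Q_1),\ell(Q_2)$. This uniformity is the key point, since the scales vary from cube to cube.

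Finally, the partition functions are nonnegative and sum to one, so pointwise
\[
    |f|=\sum_{Q_1,Q_2}\phi_{Q_1}\phi_{Q_2}|f|.
\]
Tonelli's theorem therefore gives
\[
    \|f\|_1=\sum_{Q_1,Q_2}\|\phi_{Q_1}\phi_{Q_2}f\|_1 .
\]
Combining the three displays yields $\|f\|_1\leq C\|S_{L_1,L_2}f\|_1$, which is \eqref{eq:schrodinger-product-L1-embedding}, and in particular $f\in L^1$.

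No step is a real obstacle. The only points needing care are that each localized piece genuinely lies in $h^1$, so that the embedding lemma applies, and that the embedding constant does not depend on the scales. Both are supplied by the earlier results.
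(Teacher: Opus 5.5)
Your proposal is correct and follows essentially the same route as the paper. Both arguments show that each piece $\phi_{Q_1}\phi_{Q_2}f$ is in $L^1$ by its bounded support, then chain the partition of unity and Tonelli's theorem with the scale-uniform embedding \eqref{eq:product-local-L1-embedding} and the second inequality of \eqref{eq:product-localization-comparison}.
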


\begin{proof}
Each localized function
$\phi_{Q_1}\phi_{Q_2}f$ belongs to $L^1$
by its bounded support and $L^2$ integrability.
By \eqref{eq:product-partition-of-unity},
Tonelli's theorem,
\eqref{eq:product-local-L1-embedding},
and \eqref{eq:product-localization-comparison},
\[
    \|f\|_1
    =
    \sum_{Q_1,Q_2}\|\phi_{Q_1}\phi_{Q_2}f\|_1
    \leq
    C\sum_{Q_1,Q_2}
        \|\phi_{Q_1}\phi_{Q_2}f\|
            _{h^1_{\ell(Q_1),\ell(Q_2)}}
    \leq C\|S_{L_1,L_2}f\|_1.
\]
The equality is initially understood in
$[0,\infty]$; the final bound proves integrability.
\end{proof}

\section{Proofs of the main results}
\label{sec:atomic-decomposition}

We prove Theorems~\ref{thm:main-atomic-characterization}
and~\ref{thm:main-localization}.
We first identify the Hardy space completion
with a subspace of $L^1$.
We then combine the comparison estimates from
Section~\ref{sec:comparison} with
Corollary~\ref{cor:atoms-inside-enlargements}
to obtain the atomic decomposition and the
localized norm characterization.

Throughout this section, $\eta$ is the fixed
constant chosen in
Corollary~\ref{cor:atoms-inside-enlargements}.
The atoms and the atomic space are those of
Definitions~\ref{def:covering-product-atom}
and~\ref{def:covering-product-atomic-space}.

\subsection{Identification of the completion}
\label{subsec:completion-identification}

Corollary~\ref{cor:schrodinger-core-L1}
provides an $L^1$ estimate on the defining
$L^2$ class.
To identify the completion with a space of
functions, we also need injectivity of the
resulting map into $L^1$.

\begin{lemma}\label{lem:completion-L1-identification}
The inclusion of the defining $L^2$ class
into $L^1$ extends to a continuous injective
map from $H^1_{L_1,L_2}$ into $L^1$.
Under this identification,
\begin{equation}\label{eq:schrodinger-completion-L1-embedding}
    \|f\|_1
    \leq C\|f\|_{H^1_{L_1,L_2}},
    \qquad f\in H^1_{L_1,L_2}.
\end{equation}
\end{lemma}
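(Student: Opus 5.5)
The plan is to build the map from Cauchy sequences and prove injectivity with the localization formula together with the local Fatou property.

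\textbf{Extension.} Let $\mathcal H^1_{L_1,L_2}$ denote the defining $L^2$ class. By Corollary~\ref{cor:schrodinger-core-L1}, applied to differences (the class is a linear space, and $S_{L_1,L_2}$ is sublinear), every Cauchy sequence $(f_m)$ in $\mathcal H^1_{L_1,L_2}$ for $\|S_{L_1,L_2}\cdot\|_1$ is Cauchy in $L^1$. Define $\iota[(f_m)]$ to be its $L^1$ limit. This is independent of the representative, since equivalent sequences have differences tending to zero in $L^1$. It is linear and continuous, and passing to the limit in \eqref{eq:schrodinger-product-L1-embedding} gives \eqref{eq:schrodinger-completion-L1-embedding}.

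\textbf{Injectivity.} Suppose $f_m$ is Cauchy in the Hardy norm and $f_m\to0$ in $L^1$. We must show that $\|S_{L_1,L_2}f_m\|_1\to0$. Given $\varepsilon>0$, choose $M$ such that $\|S_{L_1,L_2}(f_m-f_k)\|_1<\varepsilon$ for $m,k\geq M$. Fix $m\geq M$ and let $k\to\infty$.

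For each pair $(Q_1,Q_2)$, we have $\phi_{Q_1}\phi_{Q_2}(f_m-f_k)\to\phi_{Q_1}\phi_{Q_2}f_m$ in $L^1$. Lemma~\ref{lem:local-Hardy-Fatou} therefore gives
\[
\|\phi_{Q_1}\phi_{Q_2}f_m\|_{h^1_{\ell(Q_1),\ell(Q_2)}}\leq\liminf_{k\to\infty}\|\phi_{Q_1}\phi_{Q_2}(f_m-f_k)\|_{h^1_{\ell(Q_1),\ell(Q_2)}}.
\]
Summing over $Q_1,Q_2$ and using Fatou's lemma for the double sum, followed by the second inequality of Theorem~\ref{thm:product-localization-comparison} applied to $f_m-f_k\in L^2$, we obtain
\[
\sum_{Q_1,Q_2}\|\phi_{Q_1}\phi_{Q_2}f_m\|_{h^1_{\ell(Q_1),\ell(Q_2)}}\leq C\liminf_{k\to\infty}\|S_{L_1,L_2}(f_m-f_k)\|_1\leq C\varepsilon.
\]
The first inequality of Theorem~\ref{thm:product-localization-comparison}, applied to $f_m\in L^2$, then gives $\|S_{L_1,L_2}f_m\|_1\leq C^2\varepsilon$. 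Hence the class of $(f_m)$ is zero, which proves injectivity.

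\textbf{Main obstacle.} The key issue is that lower semicontinuity of $\|S_{L_1,L_2}\cdot\|_1$ under $L^1$ convergence is not directly available: the operator is defined only on $L^2$, and $f_k\to0$ holds only in $L^1$. The localization formula resolves this by replacing the Schr\"odinger norm with local norms, whose kernels are bounded so that Fatou applies. One must also check that $f_m-f_k$ lies in $L^2$, so that the comparison theorem is applicable; this holds by the definition of the class.
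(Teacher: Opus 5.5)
Your proof is correct; the extension step is the paper's, but your injectivity argument takes a genuinely different route.

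\textbf{The paper's route.} It works directly with the square function. It regards
$F_m(x;t_1,z_1,t_2,z_2)=[(t_1^2L_1e^{-t_1^2L_1})(t_2^2L_2e^{-t_2^2L_2})f_m](x_1+t_1z_1,x_2+t_2z_2)$
as a Cauchy sequence in $L^1(\mathcal H)$. It then uses the bound $\|t^2L_ie^{-t^2L_i}u\|_2\leq Ct^{-n_i/2}\|u\|_1$, which comes from heat-kernel domination and the spectral theorem. On every region with a bounded spatial set and $\varepsilon\leq t_i\leq T$, this bound forces the $L^1(\mathcal H)$-limit to vanish, because $f_m\to0$ in $L^1$.

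\textbf{Your route.} You pass through the localized norms instead. Lemma~\ref{lem:local-Hardy-Fatou} and Fatou's lemma over the covering supply the lower semicontinuity that is not directly available for $S_{L_1,L_2}$. The two inequalities of Theorem~\ref{thm:product-localization-comparison} then transfer it back, giving $\|S_{L_1,L_2}g\|_1\leq C^2\liminf_k\|S_{L_1,L_2}g_k\|_1$ whenever $g_k\to g$ in $L^1$ inside the defining class.

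\textbf{Comparison.} Your argument is shorter and avoids both the vector-valued limit and the truncation in scales. It also mirrors the mechanism the paper itself uses in Step~1 of the proof of Theorem~\ref{thm:main-localization}. The price is that your injectivity now rests on the full two-sided comparison, in particular on Lemma~\ref{lem:product-supported} and hence on \eqref{ass:K} and \eqref{ass:D}. The paper's injectivity argument uses only heat-kernel domination and would hold wherever the bound \eqref{eq:schrodinger-product-L1-embedding} is known. For the lemma as stated, this extra dependence costs nothing, since the extension step already relies on Corollary~\ref{cor:schrodinger-core-L1} and hence on the same comparison. There is no circularity either: Theorem~\ref{thm:product-localization-comparison} is proved in Section~\ref{sec:comparison} without reference to the completion.
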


\begin{proof}
By \eqref{eq:schrodinger-product-L1-embedding},
every Cauchy sequence in the defining Hardy
norm is Cauchy in $L^1$.
Its $L^1$ limit depends only on its equivalence
class in the completion.
This defines a continuous linear map into
$L^1$ and gives
\eqref{eq:schrodinger-completion-L1-embedding}.
It remains to prove that the map is injective.

\medskip
\noindent
\textbf{Step 1: An estimate on bounded ranges of scales.} For $i=1,2$ and $t>0$, we have
\begin{equation}\label{eq:analysis-L1-L2-bound}
    \|t^2L_i e^{-t^2L_i}u\|_2
    \leq Ct^{-n_i/2}\|u\|_1.
\end{equation}
Indeed, write
\[
    t^2L_i e^{-t^2L_i}
    =
    \bigl(t^2L_i e^{-t^2L_i/2}\bigr)
    e^{-t^2L_i/2}.
\]
The first factor is uniformly bounded on
$L^2$ by the spectral theorem, while
\eqref{eq:heat-domination} gives
\[
    \|e^{-t^2L_i/2}u\|_2
    \leq Ct^{-n_i/2}\|u\|_1.
\]
The product heat kernel and the same
factorization in both variables therefore yield
\[
    \left\|
        (t_1^2L_1e^{-t_1^2L_1})
        (t_2^2L_2e^{-t_2^2L_2})u
    \right\|_2
    \leq
    Ct_1^{-n_1/2}t_2^{-n_2/2}\|u\|_1.
\]

Consequently, for every $0<\varepsilon<T<\infty$,
\begin{equation}\label{eq:truncated-analysis-continuity}
\begin{split}
    &\int_{\mathbb R^{n_1}\times\mathbb R^{n_2}}
    \int_\varepsilon^T\int_\varepsilon^T
    \int_{B_{\mathbb R^{n_1}}(0,1)}
    \int_{B_{\mathbb R^{n_2}}(0,1)}
    \Big|
        \big[
            (t_1^2L_1e^{-t_1^2L_1})
            (t_2^2L_2e^{-t_2^2L_2})u
        \big](x_1+t_1z_1,x_2+t_2z_2)
    \Big|^2\\
    &\hspace{25mm}\times
        dz_2\,dz_1\,
        \frac{dt_2}{t_2}\frac{dt_1}{t_1}\,
        dx_1\,dx_2
    \leq C_{\varepsilon,T}\|u\|_1^2.
\end{split}
\end{equation}
Here we first integrate in the spatial variables,
using translation invariance of Lebesgue measure,
and then integrate in the remaining variables.

\medskip
\noindent
\textbf{Step 2: Injectivity.} Let $(f_m)$ be a Cauchy sequence in the defining
Hardy norm such that $f_m\to0$ in $L^1$.
We must show that
\[
    \|S_{L_1,L_2}f_m\|_1\longrightarrow0.
\]

Set
\[
    F_m(x_1,x_2;t_1,z_1,t_2,z_2)
    =
    \big[
        (t_1^2L_1e^{-t_1^2L_1})
        (t_2^2L_2e^{-t_2^2L_2})f_m
    \big](x_1+t_1z_1,x_2+t_2z_2).
\]
For each spatial point, regard $F_m$ as an
element of
\[
    \mathcal H
    =
    L^2\left(
        (0,\infty)^2
        \times B_{\mathbb R^{n_1}}(0,1)
        \times B_{\mathbb R^{n_2}}(0,1),
        \frac{dt_1}{t_1}\frac{dt_2}{t_2}\,dz_1\,dz_2
    \right).
\]
By \eqref{eq:product-square-function} and
the changes of variables $y_i=x_i+t_iz_i$,
\[
    \|F_m-F_\ell\|_{L^1(\mathcal H)}
    =
    \|S_{L_1,L_2}(f_m-f_\ell)\|_1.
\]
Thus $F_m$ converges to some $F$ in
$L^1(\mathcal H)$.

Fix a bounded measurable spatial set $E$
and $0<\varepsilon<T<\infty$.
On the region
\[
    (x_1,x_2)\in E,\qquad
    \varepsilon\leq t_1,t_2\leq T,\qquad
    z_i\in B_{\mathbb R^{n_i}}(0,1),
\]
Cauchy--Schwarz in the parameter variables
shows that $F_m\to F$ in $L^1$.
Indeed, the parameter measure of this region
is finite, so its $L^1$ norm is bounded by
a fixed multiple of the $L^1(\mathcal H)$ norm.

On the other hand,
\eqref{eq:truncated-analysis-continuity}
and $\|f_m\|_1\to0$ imply that $F_m\to0$
in $L^2$ on this region.
Since its full measure is finite, this also
gives convergence to zero in $L^1$.
Uniqueness of the $L^1$ limit therefore implies
that $F=0$ there.

Exhausting the spatial variables by bounded
sets and taking, for example,
$\varepsilon=1/N$ and $T=N$, we obtain
$F=0$ almost everywhere.
Hence
\[
    \|S_{L_1,L_2}f_m\|_1
    =
    \|F_m\|_{L^1(\mathcal H)}
    \longrightarrow0.
\]
This proves injectivity.
\end{proof}

From now on, we identify $H^1_{L_1,L_2}$
with its image in $L^1$.

\subsection{Proof of the atomic characterization}
\label{subsec:proof-atomic-characterization}

We first record the passage from localized
Hardy norms to the atomic space.
This argument applies directly to $L^1$
functions and will also be used in the proof
of Theorem~\ref{thm:main-localization}.

\begin{lemma}\label{lem:localized-norms-to-atoms}
Suppose that $f\in L^1$ and
\[
    \sum_{Q_1\in\mathcal Q_1}
    \sum_{Q_2\in\mathcal Q_2}
        \|\phi_{Q_1}\phi_{Q_2}f\|
            _{h^1_{\ell(Q_1),\ell(Q_2)}}<\infty.
\]
Then $f$ admits an $L^1$-convergent representation
by atoms from Definition~\ref{def:covering-product-atom},
with
\begin{equation}\label{eq:localized-norms-to-atoms}
    \sum_\nu|\lambda_\nu|
    \leq
    C\sum_{Q_1\in\mathcal Q_1}
    \sum_{Q_2\in\mathcal Q_2}
        \|\phi_{Q_1}\phi_{Q_2}f\|
            _{h^1_{\ell(Q_1),\ell(Q_2)}}.
\end{equation}
In particular, the same upper bound holds
for $\|f\|_{H^1_{\mathrm{at}}(\mathcal Q_1,\mathcal Q_2)}$.
\end{lemma}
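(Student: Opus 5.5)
The plan is to decompose each localized piece separately with Corollary~\ref{cor:atoms-inside-enlargements} and then sum everything. For each pair $(Q_1,Q_2)\in\mathcal Q_1\times\mathcal Q_2$, put $g_{Q_1,Q_2}=\phi_{Q_1}\phi_{Q_2}f$. By \eqref{eq:partition-of-unity}, this function vanishes outside $Q_1^*\times Q_2^*$. By hypothesis it lies in $h^1_{\ell(Q_1),\ell(Q_2)}$ (it is in $L^1$ because $f$ is), so the corollary applies. It gives
\[
    g_{Q_1,Q_2}=\sum_{\nu}\lambda_{Q_1,Q_2,\nu}\,a_{Q_1,Q_2,\nu}
    \quad\text{in }L^1,
    \qquad
    \sum_\nu|\lambda_{Q_1,Q_2,\nu}|\leq C\|g_{Q_1,Q_2}\|_{h^1_{\ell(Q_1),\ell(Q_2)}}.
\]
Here each $a_{Q_1,Q_2,\nu}$ is a local product atom at scales $\eta\ell(Q_1),\eta\ell(Q_2)$. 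Its open set and all its rectangular components lie inside $Q_1^{**}\times Q_2^{**}$. The last sentence of the corollary says exactly that these are atoms in the sense of Definition~\ref{def:covering-product-atom}, associated with the pair $(Q_1,Q_2)$. The constant $C$ depends only on the fixed $\eta$, so it is uniform over all pairs of cubes.

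Next I enumerate the countable family of triples $(Q_1,Q_2,\nu)$ as a single sequence. Summing the coefficient bounds over $Q_1,Q_2$ gives \eqref{eq:localized-norms-to-atoms} immediately. For convergence, every atom satisfies $\|a\|_1\leq1$ by \eqref{eq:covering-product-atom-L1}, and the coefficients are absolutely summable. Hence the combined series converges absolutely in $L^1$, and its sum does not depend on the order of summation. Summing first over $\nu$ recovers $g_{Q_1,Q_2}$ for each pair. Summing then over the pairs recovers $f$: by \eqref{eq:product-partition-of-unity} and dominated convergence (the partial sums are bounded by $|f|$), $\sum_{Q_1,Q_2}\phi_{Q_1}\phi_{Q_2}f=f$ in $L^1$. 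Since both iterated limits and the limit of the enumerated series are all $L^1$ limits of the same absolutely convergent family, they agree, so the series equals $f$.

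The final claim about $\|f\|_{H^1_{\mathrm{at}}(\mathcal Q_1,\mathcal Q_2)}$ then follows from the definition of the atomic norm as an infimum over representations. I do not expect a genuine obstacle here: the delicate work on support control, scale uniformity, and cancellation thresholds was already done in Lemma~\ref{lem:localized-atomic-decomposition} and Corollary~\ref{cor:atoms-inside-enlargements}. The only points needing care are the bookkeeping that justifies regrouping. This rests on absolute convergence in $L^1$ together with the uniform $L^1$ bound on atoms. One should also note that if a piece $g_{Q_1,Q_2}$ vanishes, it simply contributes no atoms.
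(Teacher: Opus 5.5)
Your proposal is correct and follows the paper's proof essentially verbatim: apply Corollary~\ref{cor:atoms-inside-enlargements} to each piece $\phi_{Q_1}\phi_{Q_2}f$ supported in $Q_1^*\times Q_2^*$, sum the coefficient bounds, and use $\|a\|_1\leq1$ to get absolute $L^1$ convergence, so that regrouping by $(Q_1,Q_2)$ together with $\sum_{Q_1,Q_2}\phi_{Q_1}\phi_{Q_2}f=f$ in $L^1$ identifies the sum as $f$. The one small difference is that you prove this last identity by dominated convergence, which is an equivalent substitute for the paper's observation that $\sum_{Q_1,Q_2}\|\phi_{Q_1}\phi_{Q_2}f\|_1=\|f\|_1$.
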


\begin{proof}
For each $Q_1,Q_2$, the function
$\phi_{Q_1}\phi_{Q_2}f$ belongs to
$h^1_{\ell(Q_1),\ell(Q_2)}$ and vanishes outside
$Q_1^*\times Q_2^*$.
Corollary~\ref{cor:atoms-inside-enlargements}
gives
\[
    \phi_{Q_1}\phi_{Q_2}f
    =
    \sum_\nu
        \lambda_{Q_1,Q_2,\nu}a_{Q_1,Q_2,\nu}
    \quad\text{in }L^1,
\]
where the atoms satisfy
Definition~\ref{def:covering-product-atom} and
\[
    \sum_\nu|\lambda_{Q_1,Q_2,\nu}|
    \leq
    C\|\phi_{Q_1}\phi_{Q_2}f\|
        _{h^1_{\ell(Q_1),\ell(Q_2)}}.
\]
Summing these bounds gives
\eqref{eq:localized-norms-to-atoms}.

By \eqref{eq:covering-product-atom-L1},
the combined atomic series converges absolutely
in $L^1$.
Moreover, \eqref{eq:partition-of-unity}
and \eqref{eq:product-partition-of-unity} give
\[
    \sum_{Q_1,Q_2}
        \|\phi_{Q_1}\phi_{Q_2}f\|_1
    =\|f\|_1,
    \qquad
    \sum_{Q_1,Q_2}\phi_{Q_1}\phi_{Q_2}f=f
    \quad\text{in }L^1.
\]
Grouping the atomic series according to
$Q_1,Q_2$ therefore shows that its sum is $f$.
\end{proof}

\begin{proof}[Proof of Theorem~\ref{thm:main-atomic-characterization}]
The identification with a subspace of $L^1$
has been established in
Lemma~\ref{lem:completion-L1-identification}.
We prove the atomic characterization in four steps.

\medskip
\noindent
\textbf{Step 1: A uniform Hardy norm bound for atoms.} Let $a$ be an atom associated with $Q_1,Q_2$.
By Definition~\ref{def:covering-product-atom},
it belongs to $L^2$, vanishes outside
$Q_1^{**}\times Q_2^{**}$, and is a local
product atom at scales
$\eta\ell(Q_1),\eta\ell(Q_2)$.
Applying Lemma~\ref{lem:product-supported},
\eqref{eq:local-scale-comparison-reverse},
and \eqref{eq:local-product-atom-bound}, we obtain
\begin{align}
    \|S_{L_1,L_2}a\|_1
    &\leq
    C\|a\|_{h^1_{\ell(Q_1),\ell(Q_2)}}
    \leq
    C\left(1+\log\frac1\eta\right)^2
    \|a\|_{h^1_{\eta\ell(Q_1),\eta\ell(Q_2)}}
    \leq C.
    \label{eq:covering-product-atom-square}
\end{align}
Since $\eta$ is fixed, the constant is
independent of the atom and its covering cubes.
In particular, every atom belongs to the
defining $L^2$ class of $H^1_{L_1,L_2}$.

\medskip
\noindent
\textbf{Step 2: Atomic decomposition on the defining $L^2$ class.} Let $f\in L^2$ with $S_{L_1,L_2}f\in L^1$.
By Corollary~\ref{cor:schrodinger-core-L1},
we have $f\in L^1$.
Furthermore,
\eqref{eq:product-localization-upper} gives
\begin{equation}\label{eq:atomic-localized-norm-sum}
    \sum_{Q_1\in\mathcal Q_1}
    \sum_{Q_2\in\mathcal Q_2}
        \|\phi_{Q_1}\phi_{Q_2}f\|
            _{h^1_{\ell(Q_1),\ell(Q_2)}}
    \leq C\|S_{L_1,L_2}f\|_1.
\end{equation}
Lemma~\ref{lem:localized-norms-to-atoms}
therefore provides an atomic representation
of $f$ in $L^1$ such that
$
    \sum_\nu|\lambda_\nu|
    \leq C\|S_{L_1,L_2}f\|_1
$.
Consequently,
\begin{equation}\label{eq:atomic-bound-on-L2-core}
    \|f\|_{H^1_{\mathrm{at}}(\mathcal Q_1,\mathcal Q_2)}
    \leq C\|S_{L_1,L_2}f\|_1.
\end{equation}

\medskip
\noindent
\textbf{Step 3: Convergence of summable atomic series.} Let
\[
    f=\sum_{\nu=1}^{\infty}\lambda_\nu a_\nu
    \quad\text{in }L^1,
    \qquad
    \sum_{\nu=1}^{\infty}|\lambda_\nu|<\infty.
\]
Every finite partial sum belongs to the
defining $L^2$ class.
The triangle inequality for the square function
and \eqref{eq:covering-product-atom-square} give
\[
    \left\|
        S_{L_1,L_2}
        \left(\sum_{\nu=M}^N\lambda_\nu a_\nu\right)
    \right\|_1
    \leq C\sum_{\nu=M}^N|\lambda_\nu|.
\]
Thus the partial sums converge in
$H^1_{L_1,L_2}$.
By \eqref{eq:schrodinger-completion-L1-embedding},
their Hardy space limit has the same $L^1$
representative as their $L^1$ limit, namely $f$.
Moreover,
$
    \|f\|_{H^1_{L_1,L_2}}
    \leq C\sum_{\nu=1}^{\infty}|\lambda_\nu|
$.
Taking the infimum over all atomic
representations yields
\begin{equation}\label{eq:atomic-to-schrodinger}
    \|f\|_{H^1_{L_1,L_2}}
    \leq
    C\|f\|_{H^1_{\mathrm{at}}(\mathcal Q_1,\mathcal Q_2)}.
\end{equation}

\medskip
\noindent
\textbf{Step 4: Atomic decomposition on the completion.} Let $f\in H^1_{L_1,L_2}$, $f\neq0$.
Choose functions $f_m$ in the defining
$L^2$ class such that
\[
    \|f_m-f\|_{H^1_{L_1,L_2}}
    \leq2^{-m}\|f\|_{H^1_{L_1,L_2}}.
\]
Then
\[
    f=f_1+\sum_{m=2}^{\infty}(f_m-f_{m-1})
\]
in $H^1_{L_1,L_2}$ and, by
\eqref{eq:schrodinger-completion-L1-embedding},
also in $L^1$.
The approximation bounds imply
\begin{equation}\label{eq:atomic-telescoping-bound}
    \|f_1\|_{H^1_{L_1,L_2}}
    +
    \sum_{m=2}^{\infty}
        \|f_m-f_{m-1}\|_{H^1_{L_1,L_2}}
    \leq3\|f\|_{H^1_{L_1,L_2}}.
\end{equation}
Indeed, the first term is at most
$\frac32\|f\|_{H^1_{L_1,L_2}}$, while the
$m$th difference is bounded by
$(2^{-m}+2^{-(m-1)})\|f\|_{H^1_{L_1,L_2}}$.

Each term in the telescoping series belongs
to the defining $L^2$ class.
Applying the construction in Step~2 to these
terms gives
\[
    f_1=\sum_\nu\lambda_{1,\nu}a_{1,\nu},
    \qquad
    f_m-f_{m-1}
    =
    \sum_\nu\lambda_{m,\nu}a_{m,\nu},
    \quad m\geq2,
\]
with
\[
    \sum_{m=1}^{\infty}\sum_\nu|\lambda_{m,\nu}|
    \leq C\|f\|_{H^1_{L_1,L_2}}
\]
by \eqref{eq:atomic-telescoping-bound}.

The combined series converges absolutely
in $L^1$ by \eqref{eq:covering-product-atom-L1},
and in $H^1_{L_1,L_2}$ by Step~3.
Its $L^1$ sum is $f$, since grouping the terms
according to $m$ recovers the telescoping series.
Lemma~\ref{lem:completion-L1-identification}
therefore identifies its Hardy space sum
with the same function $f$.
Enumerating its terms gives the required
atomic decomposition and
\[
    \|f\|_{H^1_{\mathrm{at}}(\mathcal Q_1,\mathcal Q_2)}
    \leq C\|f\|_{H^1_{L_1,L_2}}.
\]
Together with \eqref{eq:atomic-to-schrodinger},
this proves the theorem.
\end{proof}

\subsection{The localized norm characterization}
\label{subsec:localized-norm-characterization}

We now extend
Theorem~\ref{thm:product-localization-comparison}
from the defining $L^2$ class to all
$L^1$ functions.

\begin{proof}[Proof of Theorem~\ref{thm:main-localization}]
\medskip
\noindent
\textbf{Step 1: Localization of a Hardy space function.}

Let $f\in H^1_{L_1,L_2}$ and choose $f_m$
in the defining $L^2$ class such that
\[
    \|f_m-f\|_{H^1_{L_1,L_2}}\longrightarrow0.
\]
By \eqref{eq:schrodinger-completion-L1-embedding},
\[
    \|f_m-f\|_1
    \leq C\|f_m-f\|_{H^1_{L_1,L_2}}
    \longrightarrow0.
\]
For each $Q_1,Q_2$, boundedness of the
partition functions therefore gives
\[
    \phi_{Q_1}\phi_{Q_2}f_m
    \longrightarrow
    \phi_{Q_1}\phi_{Q_2}f
    \quad\text{in }L^1.
\]
Lemma~\ref{lem:local-Hardy-Fatou} implies
\[
    \|\phi_{Q_1}\phi_{Q_2}f\|
        _{h^1_{\ell(Q_1),\ell(Q_2)}}
    \leq
    \liminf_{m\to\infty}
    \|\phi_{Q_1}\phi_{Q_2}f_m\|
        _{h^1_{\ell(Q_1),\ell(Q_2)}}.
\]
Applying Fatou's lemma to the sum over the
covering cubes and using
\eqref{eq:product-localization-upper}, we obtain
\begin{align}
    &\sum_{Q_1\in\mathcal Q_1}
    \sum_{Q_2\in\mathcal Q_2}
        \|\phi_{Q_1}\phi_{Q_2}f\|
            _{h^1_{\ell(Q_1),\ell(Q_2)}}
    \leq
    \liminf_{m\to\infty}
    \sum_{Q_1,Q_2}
        \|\phi_{Q_1}\phi_{Q_2}f_m\|
            _{h^1_{\ell(Q_1),\ell(Q_2)}}
    \notag\\
    &\qquad\leq
    C\lim_{m\to\infty}\|f_m\|_{H^1_{L_1,L_2}}
    =
    C\|f\|_{H^1_{L_1,L_2}}.
    \label{eq:full-localization-upper}
\end{align}

\medskip
\noindent
\textbf{Step 2: Reconstruction from localized Hardy norms.} Conversely, let $f\in L^1$ and suppose that
the sum of localized norms is finite.
Lemma~\ref{lem:localized-norms-to-atoms}
gives $f\in H^1_{\mathrm{at}}(\mathcal Q_1,\mathcal Q_2)$.
By \eqref{eq:atomic-to-schrodinger}
and \eqref{eq:localized-norms-to-atoms},
\[
    \|f\|_{H^1_{L_1,L_2}}
    \leq
    C\|f\|_{H^1_{\mathrm{at}}(\mathcal Q_1,\mathcal Q_2)}
    \leq
    C\sum_{Q_1\in\mathcal Q_1}
    \sum_{Q_2\in\mathcal Q_2}
        \|\phi_{Q_1}\phi_{Q_2}f\|
            _{h^1_{\ell(Q_1),\ell(Q_2)}}.
\]
Together with \eqref{eq:full-localization-upper},
this proves the equivalence of membership
and the asserted norm comparison.
\end{proof}

\section*{Declarations}

\subsection*{Use of Generative AI and AI-Assisted Technologies}
During the preparation of this manuscript, the authors utilized Google's Gemini (Thinking 3.6) and OpenAI's GPT (GPT-6-Astra) to assist with initial proof strategy brainstorming, language refinement, typesetting equations, and drafting preliminary standard technical lemmas. 

The author reviewed and edited all content as necessary and takes full responsibility for the accuracy, mathematical correctness, and overall integrity of the final published work.

\subsection*{Competing Interests}
The author declare no competing financial or non-financial interests directly relevant to the content of this article.

\subsection*{Data Availability}
Data sharing is not applicable to this article as no datasets were generated or analyzed during the current study.

\end{document}